\newtheorem{dummy}{dummy}[section]
\newtheorem{lemma}[dummy]{Lemma}
\newtheorem{theorem}[dummy]{Theorem}
\newtheorem{corollary}[dummy]{Corollary}
\newtheorem{proposition}[dummy]{Proposition}
\theoremstyle{definition}
\newtheorem{definition}[dummy]{Definition}
\newtheorem{notation}[dummy]{Notation}
\newtheorem{example}[dummy]{Example}
\newtheorem{remark}[dummy]{Remark}
\newtheorem{observation}[dummy]{Observation}
\newtheorem{property}[dummy]{Property}
\numberwithin{equation}{section}
\newcommand{\R}{\mathbb{R}}
\newcommand{\Z}{\mathbb{Z}}
\newcommand{\FF}{\mathcal{F}}
\newcommand{\bfe}{\mathbf{e}}
\newcommand{\ShS}{\mathbf{Sh}^\bullet_{\mathcal{S}}(M\times \R, \mathbb{K})}
\newcommand{\ShLambda}{\mathbf{Sh}^\bullet_{\Lambda}(M\times \R, \mathbb{K})}
\begin{document}
\title[Sheaves via augmentations of Legendrian surfaces]{Sheaves via augmentations of Legendrian surfaces}

\author{Dan Rutherford}
\address{Ball State Unversity}

\author{Michael Sullivan}
\address{University of Massachusetts Amherst}

\begin{abstract}  
Given an augmentation for a Legendrian surface in a $1$-jet space, $\Lambda \subset J^1(M)$, we explicitly construct an object, $\mathcal{F} \in \ShLambda$,  of the (derived) category from \cite{STZ} of constructible sheaves on $M\times \R$ with singular support determined
 by $\Lambda$.  In the construction, we introduce a simplicial Legendrian DGA (differential graded algebra) for Legendrian submanifolds in $1$-jet spaces that, based on \cite{RuSu1,RuSu2,RuSu25}, is equivalent to the Legendrian contact homology DGA in the case of Legendrian surfaces.  
In addition, we extend the approach of \cite{STZ} for $1$-dimensional Legendrian knots to obtain a combinatorial model for sheaves in $\ShLambda$ in the $2$-dimensional case.  
\end{abstract}

\maketitle

{\small \tableofcontents}

\section{Introduction}
\label{sec:Introduction}

In recent years, microlocal sheaf theory has been shown to be a potent tool for establishing global results in symplectic and contact topology.  This is especially true in the setting of cotangent bundles and $1$-jet spaces where the method seems to have capability comparable to that of $J$-holomorphic curve theory, cf. \cite{Nadler, NadlerZ, Gu19}.  The main results of this article provide an explicit construction of sheaves from $J$-holomorphic curve invariants in the setting of  Legendrian surfaces in $1$-jet spaces.

Let $\Lambda \subset J^1(M) = T^*M \times \R$ be a closed Legendrian submanifold in the $1$-jet space of a manifold $M$.  A 
$J$-holomorphic curve based invariant of $\Lambda$ is its Legendrian contact homology DGA (differential graded algebra), $(\mathcal{A}, \partial)$, constructed in \cite{EES07}. It is an associative algebra generated by the Reeb chords of $\Lambda,$ with a count of holomorphic disks defining the differential.   An invariant of $\Lambda$ arising from microlocal sheaf theory \cite{GKS, STZ} is the (dg)-derived category, $\ShLambda$, of constructible sheaves on $M\times \R$ with singular support contained in the union of the zero section of $T^*(M\times \R)$ and the Lagrangian cylinder over $\Lambda \subset J^1M \cong S^-T^*(M\times \R) \subset ST^*(M\times \R)$.  That these objects should be related was already conjectured in \cite{STZ}, and indeed in \cite{NRSSZ} for $1$-dimensional Legendrians in $J^1\R$ an equivalence was established between a sub-category $\mathcal{C}_1(\Lambda; \mathbb{K}) \subset \mathbf{Sh}^\bullet_{\Lambda}(\R^2, \mathbb{K})$ and an $A_\infty$-category, $\mathit{Aug}_+(\Lambda; \mathbb{K})$, defined from $(\mathcal{A}, \partial)$.  The objects of $\mathit{Aug}_+(\Lambda; \mathbb{K})$ are {\it augmentations}, i.e. DGA homomorphisms $(\mathcal{A},\partial) \rightarrow (\mathbb{K}, 0)$.  

In this article we give a construction of sheaves\footnote{Here and elsewhere in the article we refer to objects of $\ShLambda$ as ``sheaves'' even though they are in fact cochain complexes of sheaves.} in $\ShLambda$ from augmentations in the case that $\Lambda \subset J^1M$ is a Legendrian surface with mild front singularities, i.e. generic singularities and no swallow tail points.  By the work of Entov \cite{Entov}, when $M$ and $\Lambda$ are orientable, after a Legendrian isotopy $\Lambda$ can be assumed to have mild front singularities, so little generality is lost by this hypothesis.  Central to our approach is the use of the cellular DGA \cite{RuSu1}, which is a DGA associated in a local, formulaic manner to a Legendrian surface equipped with a choice of compatible polygonal decomposition of the base space $M$.  Working over $\Z/2$, in \cite{RuSu2, RuSu25} the cellular DGA was shown to be stable tame isomorphic to the Legendrian contact homology DGA.   In this paper,  we introduce and work with a simplicial DGA using $\Z$-coefficients that agrees with the cellular DGA associated to a simplicial  decomposition  $\mathcal{E}$ of $M$ 
when coefficients are reduced mod $2$.

\begin{theorem}  \label{thm:main}
Let $\Lambda \subset J^1M$ be a Legendrian surface with mild front singularities equipped with a $\Z$-valued Maslov potential, $\mu$, and a compatible simplicial decomposition, $\mathcal{E}$.  The construction of this article provides a map 
\[
\Phi: \mathit{Aug}(\Lambda; \mathbb{K}) \rightarrow \ShLambda
\]
from the set of $\mathbb{K}$-valued augmentations of the simplicial DGA of $(\Lambda, \mu, \mathcal{E})$ to  $\ShLambda$.
\end{theorem}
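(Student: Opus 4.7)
The plan is to construct $\Phi(\epsilon)$ locally simplex-by-simplex from the values of the augmentation $\epsilon$ and then to show that the global consistency conditions for being an object of $\ShLambda$ reduce to the identity $\epsilon\circ\partial=0$ for the simplicial DGA. As a preliminary step, I would develop the combinatorial model for $\ShLambda$ advertised in the introduction by extending the STZ dictionary from $1$-dimensional to $2$-dimensional fronts. Using the stratification of $M\times\R$ obtained by refining $\mathcal{E}$ by the front projection of $\Lambda$, an object of $\ShLambda$ is encoded by assigning a complex of $\mathbb{K}$-modules to each stratum together with generalized corestriction morphisms to lower-dimensional strata. The singular support hypothesis forces these to be upper-triangular with respect to the $z$-ordering of sheets of $\Lambda$, and a local microlocal analysis at the distinct front singularity types (smooth sheets, cusp edges, transverse double curves, triple points, and cusp--fold intersections) pins down the allowed shape of these morphisms at each kind of local model.

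Next, with the combinatorial model in hand, I would define $\Phi(\epsilon)$ simplex by simplex. Over each $e\in\mathcal{E}$ and each region of the complement of the front above $e$, the stalk is taken to be a free complex whose basis is indexed by the sheets of $\Lambda$ lying below that region over $e$. The internal differential of this complex, and the corestriction maps assigned to faces $e'\prec e$ and to jumps across sheets, are specified as triangular matrices whose strictly upper-triangular entries are the values $\epsilon(a)\in\mathbb{K}$ on the degree-zero generators $a$ attached to the corresponding (simplex, pair-of-sheets) datum in the simplicial DGA; the diagonal entries are the identities or inclusions dictated by the local front geometry. Grading is accounted for by the Maslov potential $\mu$ together with a dimension shift by $\dim e$. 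This is the natural analogue of the recipe used in \cite{NRSSZ} for $1$-dimensional knots, with the cellular and simplicial combinatorics developed in \cite{RuSu1,RuSu2,RuSu25} replacing the $1$-dimensional bookkeeping.

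The main step is to verify that the data so constructed defines an object of $\ShLambda$ under the combinatorial model. This splits into two checks: (i) the square of the total differential over the stalk on each cell vanishes, and (ii) the compositions of restrictions along each lower-dimensional simplex $e'$ from surrounding cells agree. By the design of the simplicial DGA, each relation appearing in (i) and (ii) corresponds term-for-term to a summand of $\partial a$ for a degree-zero generator $a$ attached to $e'$, so both checks reduce to $\epsilon(\partial a)=0$, which holds by the augmentation property. The main technical obstacle I anticipate is verifying this correspondence at codimension-two and codimension-three strata where several sheets and front singularities interact, since there the simplicial differential has many terms that must match the local sheaf coherence relations coming from the microlocal analysis in the preliminary step; the mild-front-singularity hypothesis (Entov) is precisely what guarantees that the list of local models to be treated is finite and explicit. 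Once both checks are verified, the assignment $\epsilon\mapsto\Phi(\epsilon)$ is the desired map.
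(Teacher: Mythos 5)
Your overall architecture---first establish a combinatorial model $\mathbf{Fun}^\bullet_\Lambda(\mathcal{S},\mathbb{K})\cong\ShLambda$ by extending \cite{STZ} to the 2-dimensional case, then build a functor on the poset $P(\mathcal{S})$ from the augmentation---matches the paper's organization, and your observation that the coherence relations should ultimately reduce to $\epsilon\circ\partial=0$ is on target. However, there is a genuine gap at the heart of step (3). You propose to assign to each region above a simplex $e$ a free complex on the sheets below it, with differentials and restriction maps filled in directly from the scalars $\epsilon(a)$, and then to check that compositions of restrictions ``agree.'' They do not agree. The data extracted from $\epsilon$ is precisely a \emph{chain homotopy diagram}: the differentials $d_i$ at vertices and chain maps $f_{ij}$ on edges satisfy $f_{12}f_{01}-f_{02}=d_2K_{012}+K_{012}d_0$ for a homotopy operator $K_{012}$ coming from the degree-zero generator on the 2-simplex (this is exactly what $\epsilon(\partial a)=0$ says; see (\ref{eq:simplex}) and Example \ref{ex:2}). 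So the triangle of restriction maps commutes only up to homotopy, while an object of $\mathbf{Fun}^\bullet_\Lambda(\mathcal{S},\mathbb{K})$ is a strict functor to cochain complexes. Your check (ii) would therefore fail at every 2-simplex where the homotopy $K_{012}$ is nonzero.

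The paper closes this gap with the generalized mapping cylinder construction of Section \ref{sec:GMC}: rather than taking the stalk to be the span of sheets below a region, one first packages the $\epsilon$-data into a simplex diagram $G(\mathcal{C})(s)=(\overline{e_I},X(s),\{a_F\})$ and then sets $F(\mathcal{C})(s)=\mathit{Map}(G(\mathcal{C})(s))=\bigoplus_{F\leq\overline{e_I}}V_F$, a strictly larger complex. The mapping cylinder differential absorbs the homotopies $K$, and functoriality of $\mathit{Map}$ (Proposition \ref{prop:MappingFun}) is what delivers strict commutativity of the generization maps. Two smaller omissions: the paper's preliminary module is $X(s)=Y(s)\oplus Z(s)$ where the $Z(s)$ summand, generated by cusp components near cusp edges, is essential for the singular-support conditions at $(Cu)$, $(VCu)$, and $(FCu)$ strata and does not appear in your description; and the paper uses \emph{lower} triangular maps in the ordering by $z$-coordinate (with the dual convention noted relative to \cite{RuSu3}), a minor bookkeeping point. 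To repair your argument, insert the intermediate factorization through simplex diagrams and the mapping cylinder functor before attempting to verify the conditions of Definition \ref{def:fun}.
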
  

In particular, we have:

\begin{corollary}  \label{cor:main}
Let $\Lambda \subset J^1M$ be a Legendrian surface with $\Lambda$ and $M$ orientable.  
If the Legendrian contact homology DGA of $\Lambda$ has an augmentation to $\Z/2$, then there exists a non-constant sheaf $\mathcal{F} \in \mathbf{Sh}^\bullet_{\Lambda}(M\times \R, \mathbb{Z}/2)$.
\end{corollary}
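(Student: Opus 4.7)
The plan is to deduce Corollary \ref{cor:main} from Theorem \ref{thm:main} by assembling three ingredients already alluded to in the introduction: Entov's front-normalization, the stable tame isomorphism of \cite{RuSu2, RuSu25} between the LCH DGA and the cellular DGA over $\Z/2$, and the identification of the simplicial DGA with the cellular DGA after reduction mod $2$.

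First, since $\Lambda$ and $M$ are orientable, Entov's theorem lets us replace $\Lambda$, up to Legendrian isotopy, by a surface with mild front singularities. Both the hypothesis and the conclusion of the corollary are invariant under Legendrian isotopy: LCH DGAs transform by stable tame isomorphism, which preserves the set of augmentations, and $\ShLambda$ is preserved via the Guillermou--Kashiwara--Schapira sheaf quantization, with the subcategory of constant sheaves mapped to itself. We then pick a compatible simplicial decomposition $\mathcal{E}$ of $M$ and any Maslov potential $\mu$ for $\Lambda$ of the type required by Theorem \ref{thm:main}.

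Next, combining the stable tame isomorphism of \cite{RuSu2, RuSu25} with the mod $2$ coincidence of the simplicial and cellular DGAs, the hypothesized $\Z/2$-valued augmentation $\epsilon$ of $(\mathcal{A}, \partial)$ yields a $\Z/2$-valued augmentation $\tilde\epsilon$ of the simplicial DGA of $(\Lambda, \mu, \mathcal{E})$; here one uses that augmentations transport along tame isomorphism and restrict along stabilization inclusions. Applying Theorem \ref{thm:main} with $\mathbb{K} = \Z/2$ then produces a sheaf $\mathcal{F} := \Phi(\tilde\epsilon) \in \mathbf{Sh}^\bullet_\Lambda(M\times \R, \Z/2)$.

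The remaining step, which I expect to be the main obstacle, is to verify that $\mathcal{F}$ is non-constant. My intended argument is local: by construction, as one crosses a smooth sheet of the front projection of $\Lambda$, the stalks of $\Phi(\tilde\epsilon)$ change by a non-trivial cone dictated by the simplicial-DGA generators attached to that sheet. Consequently the microlocalization of $\mathcal{F}$ along $\Lambda$ is non-zero, so the singular support of $\mathcal{F}$ genuinely meets the Lagrangian cylinder over $\Lambda$ rather than lying wholly on the zero section of $T^*(M\times \R)$, and in particular $\mathcal{F}$ cannot be constant. The delicate point is to ensure that no degeneration of $\tilde\epsilon$ can collapse this local crossing data; since $\Phi$ is built locally from the simplicial DGA, I would address this by inspecting the explicit local model of $\Phi$ over a single simplex meeting a generic point of the front, where the generators present in that cell should force a non-trivial stalk jump independently of the particular augmentation chosen.
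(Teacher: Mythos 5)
Your proposal is essentially correct and fills in the details that the paper leaves implicit: the corollary is stated as an immediate consequence of Theorem~\ref{thm:main}, so the intended route is exactly the one you describe — Entov's front-normalization plus Legendrian-isotopy invariance of both sides, transport of the augmentation through the stable tame isomorphism of \cite{RuSu2,RuSu25} and the mod-$2$ identification of the simplicial DGA with the cellular DGA (Theorem~\ref{thm:simplicial}), and then an application of $\Phi$.

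The one place where you hedge — the ``delicate point'' that some degenerate $\tilde\epsilon$ might collapse the stalk jump — is in fact not delicate, and the paper says so explicitly in the closing remarks of Section~\ref{sec:SheafConstruction}: the sheaves produced always have microlocal rank exactly $1$. You can see this directly from the construction of $F(\mathcal{C})$. Passing from a Legendrian $2$-stratum $s_0 \in \Lambda^F_0$ to the $3$-stratum $s_+$ directly above it adds a single sheet $T$ to $Y(s)$, so $G(\mathcal{C})(s_0 \to s_+)$ is an inclusion of simplex diagrams with one-dimensional cokernel spanned by $T$. On that cokernel the vertex maps are strictly lower triangular, hence zero, and the edge maps are forced to be the identity by Definition~\ref{def:CHD}; the resulting quotient simplex diagram has a mapping cylinder with total cohomology of rank exactly $1$, independently of the augmentation. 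Hence the Morse group at the downward conormal to every smooth front point is nonzero, so $SS(\mathcal{F})$ contains $\R_{>0}\Lambda^-$ and $\mathcal{F}$ cannot be locally constant. This gives a crisp finish to the argument you sketch, with no case analysis on $\tilde\epsilon$ required.
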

  
For a discussion of other works relating augmentations and sheaves, see \ref{sec:works} below.  An interesting feature of our approach is that in the construction of cochain complexes of sheaves from augmentations, the underlying sheaves of $\mathbb{K}$-modules that constitute the terms in the complexes  are determined by $(\Lambda, \mu, \mathcal{E})$; only the differentials depend on the particular augmentation.  Moreover, the construction has a very concrete nature with both sides, augmentations and sheaves, reduced to explicit  combinatorial structures.

\subsection{Overview and organization}

The construction of sheaves from augmentations of the simplicial DGA is summarized in the following schematic:

\[
 \framebox{$\begin{array}{c} \mbox{Augmentations of} \\ \mathcal{A}(\Lambda, \mathcal{E}) \end{array}$} =   
 \framebox{$\begin{array}{c} \mbox{Chain homotopy} \\ \mbox{diagrams for} \\ (\Lambda, \mathcal{E}) \end{array}$} \rightarrow 
\framebox{$\begin{array}{c} \mbox{Combinatorial sheaves in} \\ \mathbf{Fun}^\bullet_\Lambda(\mathcal{S}, \mathbb{K}) \end{array}$} \cong
\framebox{$\begin{array}{c} \mbox{Sheaves in} \\ \ShLambda \end{array}$} 
\]

In Section \ref{sec:SimplicialAlgebra}, we introduce the simplicial DGA, $\mathcal{A}(\Lambda, \mathcal{E})$, for Legendrians of any dimension having mild front singularites.  
The exposition emphasizes similarities with the cobar DGA of the usual simplicial chain complex of $\mathcal{E}$ which become useful later in the article.  From this perspective, it is no more difficult to define $\mathcal{A}(\Lambda, \mathcal{E})$ for Legendrians of any dimension, and the definition is made somewhat more efficient by treating all dimensions  simultaneously; that the result is a DGA is established in Theorem \ref{thm:dg}.  
With coefficients reduced mod $2$, when $\dim(\Lambda) = 2$ the simplicial DGA becomes a special case of the cellular DGA from \cite{RuSu1}, and hence is stable tame isomorphic to the Legendrian contact homology DGA.  It is an open question if the simplicial DGA agrees with the usual LCH DGA when  $\dim(\Lambda) >2.$

Section \ref{sec:ConstructibleSheaves} establishes, in the case $\dim \Lambda =2$,  a combinatorial model for $\ShLambda$, building on the approach of \cite{STZ} used for $1$-dimensional Legendrian knots.  A {\it combinatorial sheaf} for $\Lambda$ is a functor  
   from the poset category of a certain stratification $\mathcal{S}$ of the front space $J^0(M)=M \times \R,$ to cochain complexes of $\mathbb{K}$-modules, subject to the restrictions of Definition \ref{def:fun}.
 Theorem \ref{thm:FunSh} translates these restrictions into singular support requirements for sheaves in $\ShLambda$. 
 The quasi-equivalence of categories  
  $\mathbf{Fun}^\bullet_\Lambda(\mathcal{S}, \mathbb{K}) \cong \ShLambda$ results as Corollary \ref{cor:Equiv}.

In Section \ref{sec:SDandGMC}, we review a way of packaging complexes, chain maps, and (higher) homotopies into an $n$-simplex, which we call a {\it simplex diagram.}  
  The second half of the section establishes properties of a generalized mapping cylinder construction that will later be used to produce fully commutative diagrams from simplex diagrams.  

In Section \ref{sec:CHD}, for Legendrians of arbitrary dimension, we establish a bijection between augmentations of the simplicial DGA of $(\Lambda, \mathcal{E})$ and {\it chain homotopy diagrams} (abbrv. {\it CHDs}) which are collections of maps associated to the simplices of $\mathcal{E}$ that assemble appropriately to form simplex diagrams whose underlying vector spaces are spanned by the sheets of $\Lambda$.  The case of surfaces with $\Z/2$-coefficients was established in \cite{RuSu3} using a definition of CHD in the $2$-dimensional case that is equivalent to but slightly different in appearance from the one used here.

Finally, in Section \ref{sec:SheafConstruction}, we construct a combinatorial sheaf from a CHD for a Legendrian surface $\Lambda$.  The section concludes by discussing possible modifications of the construction for the case of higher dimensional DGA representations and sheaves with micro-local rank larger than $1$.

\subsection{Related works}  \label{sec:works}  As mentioned above, for $1$-dimensional Legendrian knots in $\mathbb{R}^3$ a categorical equivalence is established in \cite{NRSSZ} between augmentations and sheaves of micro-local rank 1.  We leave it as a possible direction for future research as to whether the methods of the current article can be extended beyond the object level to obtain a map of categories.  When the Legendrian knot is the rainbow closure of a positive two-braid, Chantraine, Ng and Sivek \cite{CNS} generalize \cite{NRSSZ} to an equivalence (at the level of cohomology) between $n$-dimensional representation categories, defined as in a construction of \cite{CDGG}, and sub-categories of $\ShLambda$ consisting of sheaves with microlocal rank $n$.

Correspondences between augmentations and sheaves have been previously obtained for some interesting, special classes of Legendrian surfaces.  In \cite{Gao}, Honghao Gao provides an elegant construction of augmentations from micro-local rank 1 sheaves for conormal tori of smooth knots and links in $\R^3$, which form a class of Legendrian tori in $J^1S^2$, and proves that it gives a bijection between suitably defined moduli spaces of objects.   
The synergy between microlocal sheaf invariants and $J$-holomorphic curves for conormal tori is also nicely illustrated in the exciting recent result, due to Vivek Shende, that knots in $\R^3$ are determined up to isotopy by the Legendrian isotopy type of their conormal tori \cite{Shende}.  Shende's first proof of this result made use of microlocal sheaf theory and was then followed with a second proof by Ekholm, Ng, and Shende using methods of $J$-holomorphic curve theory \cite{ENS}.  

In \cite{TZ}, Treumann and Zaslow introduce a $2$-sheeted Legendrian surface, $\Lambda_G$, associated to any tri-valent graph, $G \subset S^2$, and compute the moduli space of sheaves for $\Lambda_G$ in terms of colorings of the dual graph $\widehat{G}$.  Complementary results on the augmentation side are provided by Casals, Murphy, and Sackel in \cite{CasalsMurphy} where a DGA inspired by the Legendrian contact homology DGA of $\Lambda_G$ is combinatorially associated to such a trivalent graph $G$ with its moduli space of augmentations shown in the appendix to agree with the sheaf moduli space from \cite{TZ}. 

A quite general approach to relating augmentations and sheaves, by way of wrapped Fukaya categories, is available through the combination of works \cite{GPS, EL, BEE, Ekh}.  In \cite[Section 6.4]{GPS}, for $\Lambda \subset J^1\R^n$ an $A_\infty$-equivalence is established between the sub-category $\ShLambda_0 \subset \ShLambda$ of sheaves with acyclic stalks at $\infty$ and a category of $A_\infty$-modules of the endomorphism algebra, $\mathcal{A}_\Lambda = CW^*(D,D)_{T^*\mathbb{R}^{n+1}, \Lambda}$, of a Lagrangian linking disk for $\Lambda$ in a partially wrapped Fukaya category associated to $\Lambda$.  Moreover, \cite[Conjecture 3, Appendix B]{EL} defines an $A_\infty$-algebra map $\Psi$ from $\mathcal{A}_\Lambda$ to an enhanced version of the Legendrian contact homology DGA that uses chains on the loop space of $\Lambda$ to incorporate information from higher dimensional moduli spaces of holomorphic disks, and using the methods of \cite{BEE, Ekh} the map is a quasi-isomorphism.  

In addition to the above cited references, there is much current research activity related to the slogan from \cite{NRSSZ} that ``augmentations are sheaves''.  In particular, An, Bae, and Su have announced a categorical equivalence in the case of $1$-dimensional Legendrian graphs, cf. \cite{ABS}.  We mention also the work-in-progress of Bourgeois and Viterbo extending the sheaf quantization of \cite{Viterbo} to augmented Legendrians of arbitrary dimension.  

\subsection{Acknowledgments}
The first author is partially supported by grant 429536 from the Simons Foundation.  The second author is partially supported by grant 317469 from the Simons Foundation. He thanks the Centre de Recherches Mathematiques for hosting him while some of this work was done. The authors also thank Baptiste Chantraine, Honghao Gao, Stephane Guillermou, Vivek Shende, David Treumann, Eric Zaslow, and Mahmoud Zeinalian for educational conversations and e-mail correspondence.

\section{A simplicial Legendrian DGA}
\label{sec:SimplicialAlgebra}

In Section \ref{sec:Cobar}, we review the simplicial cobar construction.
In Section \ref{sec:Mild}, after reviewing Legendrian front singularities with emphasis on $2$-dimensional Legendrians,
we introduce a class of compatible simplicial decompositions (of the base space) for Legendrians with mild front singularities.  
  In Section \ref{sec:SimplicialReformulation}, we define 
 a simplicial DGA over $\Z$ for Legendrians submanifolds of any dimension having mild front singularities.
 When the Legendrian has dimension 1 or 2, using $\Z/2$ coefficients, this DGA is stable tame isomorphic to the Legendrian contact DGA defined with pseudo-holomorphic curves.

\subsection{Simplicial algebra}
\label{sec:Cobar}

Let $\mathcal{E} = \{e_I \}$ be a simplicial decomposition of a topological space $M$.  We use the following notations: The vertices of $\mathcal{E}$ are $e_0, \ldots, e_N$.
  The cells $e_I$ are the interiors of simplices (with closed simplices denoted $\overline{e_I}$).  Moreover, the index $I$ is the tuple $I =(i_0, \ldots,  i_m)$ with $i_0 < \ldots < i_m$ such that $\overline{e_I}$ has vertices $e_{i_0},\ldots, e_{i_m}$.  We sometimes write $e_I = e_{i_0\ldots i_m}.$   There is a partial order defined on $\mathcal{E}$  by 
\[
e_I \leq e_J \quad \Leftrightarrow \quad e_I \subset \overline{e_J} \quad \Leftrightarrow  \quad I \subset J.
\]
We will make use of gradings
\[
|e_I| = \dim e_I = m,  \quad  \quad   \left\|e_I\right\| = |e_I|-1
\]   
where $|e_I|$ is the degree of $e_I$ in the simplicial chain complex $C_*(M)$ and $\|e_I\|$ is the degree of $e_I$ in the cobar complex, $\Omega C_*(M)$, that will be reviewed momentarily. 

Let $\partial_s:C_*(M) \rightarrow C_*(M)$ be the simplicial boundary operator and $\Delta_s:C_*(M) \rightarrow C_*(M) \otimes C_*(M)$ be the Alexander-Whitney coproduct from simplicial homology,
  \[
\partial_s(e_{i_0\cdots i_m}) = \sum_{k = 0}^{m} (-1)^k e_{i_0 \cdots \widehat{i_k} \cdots \cdots i_m},
\quad
\Delta_s(e_{i_0\cdots i_m}) = \sum_{k = 0}^{m} e_{i_0\cdots i_k} \otimes e_{i_k\cdots i_m}.
\]
Let $\partial^{red}_s$ be the ``reduced" boundary operator defined as in $\partial_s$ but without the first and last terms in the above sum.  

The cobar complex of $(C_*(X), \partial_s, \Delta_s)$ is the DGA (differential graded algebra) $(\Omega C_*(X), D_s)$.  The underlying graded algebra $\Omega C_*(X)$ is the unital tensor algebra of $C_*(X)[1]$ where $[1]$ denotes the degree shift so that $C_*(X)[1]_m = C_{m+1}(X)$.  As a unital associative algebra, $\Omega C_*(X)$ is freely generated by simplices $e_I$ with grading $\|e_I\|$.  
  The differential satisfies 
\begin{eqnarray*}
D_s(e_{i_0\cdots i_m}) = -\sum_{k = 0}^{m} (-1)^k e_{i_0 \cdots \widehat{i_k} \cdots \cdots i_m} +  \sum_{k = 0}^{m} (-1)^ke_{i_0\cdots i_k}  \otimes e_{i_k\cdots i_m},
\end{eqnarray*}
and the graded Liebniz rule $D_s(xy) = D_s(x) y + (-1)^{\left\|x \right\|} x D_s(y)$.  A ``reduced" version $D_s^{red}$ is again defined by omitting the first and last term in the first sum.  
Observe the relation
\begin{equation} \label{eq:Dreduced}
D_s^{red} = \varphi \circ D_s \circ \varphi^{-1}
\end{equation}
where $\varphi:\Omega C_*(X) \rightarrow \Omega C_*(X)$ is the (grading preserving) algebra homomorphism that satisfies
\begin{equation} \label{eq:varphimap}
\varphi(e_I) =\left\{\begin{array}{cr} e_I -1, & \dim e_I = 1 \\ e_I, & \dim e_I \neq 1. \end{array}\right.
\end{equation} 

\begin{remark}
 These are well-known constructions. For example,  $D_s^{red}$ was used in \cite{RiveraZeinalian} to generalize Adams' result that the cobar of the singular chains of a space is quasi-isomorphic
  to the chains of its loop space to the non-simply connected case.
 \end{remark}

\subsection{Legendrians with mild front singularities}
\label{sec:Mild}

In a $1$-jet space $J^1M =T^*M\times \R$, standard coordinates $(x,y,z)$ with $(x,y) \in T^*M$ and $z \in \R$ arise from a choice of local coordinate $x$ on $M$.  We define the {\bf front and base projections} 
$$\pi_{xz}: J^1M \rightarrow J^0M := M \times \R, \quad \pi_{x}: J^1M \rightarrow M.$$
Let $\Lambda \subset J^1M$ be a closed Legendrian submanifold of any dimension $n= \dim M$, and assume $\Lambda$ has {\bf mild front singularities.}  
 This means that  the front projection $\pi_{xz}\big|_{\Lambda}$ is an immersion outside a codimension $1$ submanifold $\Lambda_{\mathit{cusp}} \subset \Lambda$ along which $\pi_{xz}\big|_{\Lambda}$ has a standard cusp singularity.  More precisely, near any point of $\Lambda_{\mathit{cusp}}$ there exists a choice of coordinate on $\Lambda$ and $J^0M$ such that $\pi_{xz}(t_1, \ldots, t_n) = (t_1^3,t_1^2,t_2, \ldots, t_n)$ and $\Lambda_{\mathit{cusp}}$ is the subset $\{t_1=0\}$.  Any point in $\Lambda \setminus \Lambda_{\mathit{cusp}}$, has a neighborhood $U \subset \Lambda$ that is the $1$-jet of a function on $\pi_x(U)$, in particular, the front projection $\pi_{xy}(U)$ is the graph of some local defining function $f:\pi_x(U) \rightarrow \R$.   A {\bf $\Z/m$-valued Maslov potential} on $\Lambda$ is a locally constant function
\[
\mu: \Lambda \setminus \Lambda_{\mathit{cusp}} \rightarrow \Z/m
\]
whose value increases by $1$ mod $m$ when crossing $\Lambda_{\mathit{cusp}}$ from the lower sheet of the cusp edge to the upper sheet (in terms of the $z$-coordinate).  A $\Z/m$-valued Maslov potential exists on $\Lambda$ if and only if $m$ divides the Maslov number $m(\Lambda)$, cf.
 \cite{EES05a}.  To simplify considerations involving grading, we will restrict attention primarily to the case where $m(\Lambda) =0$, so that $\Lambda$ can be equipped with a $\Z$-valued Maslov potential. 
 
\begin{remark}
Entov proves that for $\Lambda \subset J^1(M)$ with $\dim(\Lambda) = 2$ and $\Lambda, M$ both orientable, $\Lambda$ can be assumed to satisfy the mild front singularities condition after a Legendrian isotopy 
\cite[Section 1.8]{Entov}.  
  Alvarez-Gavela generalizes a higher-dimensional result in \cite{Entov} to prove that the mildness condition can be achieved with a small isotopy in all dimensions, providing that there exists a formal deformation that makes it mild \cite[Theorem 1.17]{AlvarezGavela}.
 \end{remark} 
 
Consider now a Legendrian surface, i.e. $\Lambda \subset J^1M$ with $\dim(\Lambda) = \dim(M) =2$, with mild front singularities.    
  After possibly applying a small Legendrian isotopy, we can assume that $\Lambda$ is self transverse in both the front and base projections.  Then,   
the {\bf{Legendrian front}}  $\pi_{xz}(\Lambda)$ is stratified as 
\begin{equation} \label{eq:xzStrat}
\pi_{xz}(\Lambda) =   \Lambda^F_0 \sqcup \Lambda^F_1 \sqcup \Lambda^F_2
\end{equation}
 where $\Lambda^F_k$ is the union of all points belonging to
  codimension $k$ singularities in $\pi_{xz}(\Lambda)$: $\Lambda^F_0$ consists of smooth (non-cusp) points of $\pi_{xz}$ where $\pi_{xz}:\Lambda \rightarrow \pi_{xz}(\Lambda)$ is one-to-one;  
   $\Lambda^F_1$ consists of points either in the image of a single cusp point of $\Lambda_{cusp}$ or at a crossing of two smooth sheets of $\Lambda$;   
 $\Lambda^F_2$ consists of {\bf cusp-sheet intersection points}, where a cusp edge intersects a single smooth sheet of $\Lambda$ transversally, and {\bf triple points} at the intersection of three smooth sheets.  See Figure \ref{fig:FrontSing}. (Swallow tail points are not present due to the assumption of mild front singularities.)

\begin{figure}
\labellist
\small
\pinlabel $x_1$ [l] at 38 0
\pinlabel $x_2$ [l] at 20 28
\pinlabel $z$ [b] at 0 38
\endlabellist
\centerline{\includegraphics[scale=.6]{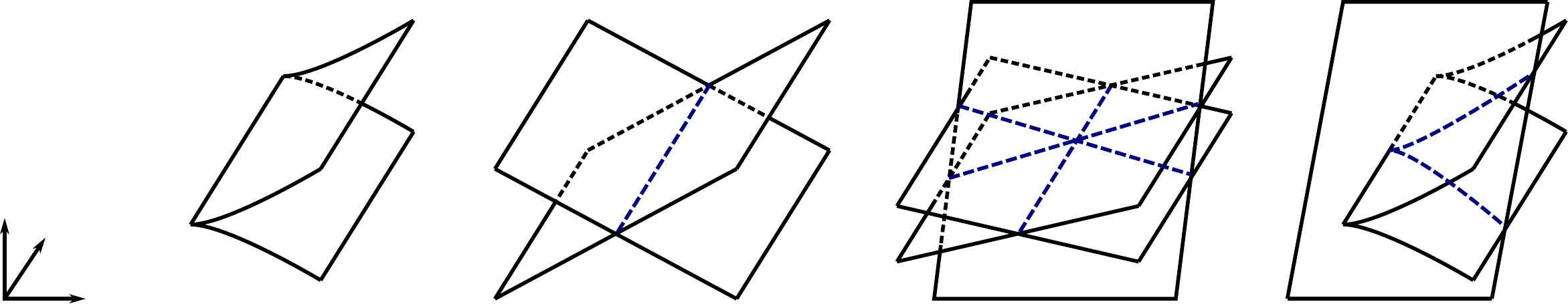}}

\caption{ Local appearance  near $\Lambda^F_1$ and $\Lambda^F_2$ of a Legendrian surface with mild front singularities. }
\label{fig:FrontSing}
\end{figure}

A similar stratification 
\begin{equation} \label{eq:baseStrat}
\pi_x(\Lambda)= \Lambda^B_0 \sqcup \Lambda^B_1 \sqcup \Lambda^B_2
\end{equation}
 exists for the base projection of $\Lambda$.  In detail, $\Lambda^B_k$ consists of points $x \in \pi_x(\Lambda)$ such that the pre-image of $x$ in $\pi_{xz}(\Lambda)$ consists of points $(x,z_1), \ldots, (x,z_l)$ with $(x,z_i)\in \Lambda^F_{k_i}$ with $k_1+\cdots +k_l = k$.   Points in $\Lambda^B_1$ belong to base projection of a single crossing or cusp arc;  points in $\Lambda^B_2$ belong either to the projection of $\Lambda^F_2$ or to the transverse intersection of the projection of a pair of (crossing or cusp) arcs in $\Lambda^F_1$.

\begin{remark}  A similar decomposition $\pi_{xz}(\Lambda) = \Lambda^F_0 \sqcup \Lambda^F_1 \sqcup \cdots \sqcup \Lambda^F_n$ exists when $\Lambda$ is $n$-dimensional with mild front singularities.  Then, $\Lambda^F_k$ consists of points whose preimage in $\Lambda$ consists of $p$ points in $\Lambda \setminus \Lambda_{\mathit{cusp}}$ and $q$ points in $\Lambda_{\mathit{cusp}}$ where $p+2q-1 = k$.
\end{remark}

\begin{definition}  
\label{def:compatible}
For a Legendrian surface $\Lambda \subset J^1M$ with $\dim(\Lambda)=2$ having mild front singularities, we say that a simplicial decomposition $\mathcal{E} = \{e_I \}$ of $M$ is {\bf compatible with $\Lambda$} if for all $0 \leq k \leq 2$, $\Lambda^B_k$  
is contained in the $(2-k)$-skeleton.   
\end{definition}

\begin{remark}
\begin{enumerate}
\item Any Legendrian surface has many compatible simplicial decompositions.
\item For the constructions of this article, the form of the triangulation outside of $\pi_x(\Lambda)$ is not particular important, and working instead with a simplicial decomposition of $\pi_x(\Lambda)$ (having only a finite number of simplices) would suffice.  However, to simplify exposition we consider decompositions of all of $M$.
\end{enumerate}
\end{remark}

We observe here some properties of $\Lambda$ above individual cells $e_I \in \mathcal{E}$ in the case that $\mathcal{E}$ is compatible with $\Lambda$ and that ${\dim(\Lambda) = 2}.$ 
\begin{property}  \label{ob:cusp}
For any $e_I \in \mathcal{E}$, the inverse image $\pi_x^{-1}(e_I) \subset \Lambda$ is a union of components that are projected homeomorphically to $e_I$ by $\pi_x$, and each component of $\pi_x^{-1}(e_I)$ is either contained entirely in $\Lambda \setminus \Lambda_{\mathit{cusp}}$ or contained entirely in $\Lambda_{\mathit{cusp}}$.
\end{property}

For each $e_I \in \mathcal{E}$, let $\Lambda(e_I)$ denote the set of {\bf sheets of $\Lambda$ above $e_I$} which by definition are those components of $\pi_{x}^{-1}(e_I) \subset \Lambda$ not belonging to $\Lambda_{\mathit{cusp}}$.
Note that for any $S_i \in \Lambda(e_I)$, the front projection $\pi_{xz}(S_i) \subset e_I \times \R$ coincides with the graph of a local defining function $z_i:e_I \rightarrow \R$; we will sometimes write $z_i = z(S_i)$.  Components of $\pi_x^{-1}(e_I)$ belonging to $\Lambda_{\mathit{cusp}}$ are called {\bf cusp components above $e_I$}.

\begin{property}  \label{ob:crossing}
For any $S_i, S_j \in \Lambda(e_I)$, their front projections are either disjoint with one entirely above the other, or they coincide completely.  That is, their defining functions satisfy either $z(S_i) > z(S_j)$, $z(S_i) > z(S_j)$, or $z(S_i) = z(S_j)$. 
\end{property}

\begin{property} \label{ob:closure}
 When $e_J \subset \overline{e_I}$, every sheet in $\Lambda(e_J)$ belongs to the closure (in $\Lambda$, not $\pi_{xz}(\Lambda)$) of a unique sheet in $\Lambda(e_I)$.  The remaining sheets of $\Lambda(e_I)$ meet in pairs at cusp components above $e_{J}$.  
\end{property}

When $\dim \Lambda \neq 2$, we take the above properties as the definition of compatible simplicial decompositions.  That is, we define a simplicial decomposition $\mathcal{E}$ of $M$ to be {\bf compatible with $\Lambda$} if Properties \ref{ob:cusp}-\ref{ob:closure} hold.  We do not address the question of whether compatible simplicial decompositions always exist.  Higher dimensional Legendrians will only be considered in the remainder of this section and in Section \ref{sec:CHD}.

\subsection{A simplicial Legendrian DGA}
\label{sec:SimplicialReformulation}

We will now associate a DGA $(\mathcal{A}, \partial)$ to a triple $(\Lambda, \mu, \mathcal{E})$ consisting of a Legendrian $\Lambda \subset J^1M$ with mild front singularities equipped with a $\Z$-valued Maslov potential, $\mu: \Lambda \setminus \Lambda_{\mathit{cusp}} \rightarrow \Z$, and a compatible simplicial decomposition, $\mathcal{E}$.  For each $e_I$ choose an ordering of the sheets in $\Lambda(e_I)$ as $S_1, \ldots, S_{N_I}$ so that the $z$-coordinates satisfy $z(S_{i-1}) \geq z(S_{i})$ for all $1 < i \leq N_I$.  Let $\mathcal{A}$ be the free unital associative algebra  
  over $\Z$ with generators $m_{i,j}^I$ for all $I, i,$ and $j$ such that $S_i,S_j \in \Lambda(e_I)$ satisfy $z(S_i) > z(S_j)$.  The grading 
 is  given
by 
$$ |m_{i,j}^I| = \mu(S_i) - \mu(S_j) + \|e_I\|.$$

In order to define the differential $\partial :\mathcal{A} \rightarrow \mathcal{A}$ we introduce some matrix notations.  For each $e_I$, let 
\[
M(e_I)= (m_{i,j}^I)
\]
 be the 
strictly upper triangular $N_I \times N_I$ matrix with entries $m_{i,j}^I$ when $m_{i,j}^I$ exists, and $0$ if $m_{i,j}^I$ does not exist, eg. the $(k,l)$-entry will be $0$ if $S_k, S_{l} \in \Lambda(e_I)$ cross one another above $e_I$ in $\pi_{xz}(\Lambda)$.  
  More generally, for each $e_J$ with $e_J \subset \overline{e_I}$ form an $N_I \times N_I$ matrix $M_I(e_J)$ by placing generators associated to $e_J$ in positions specified by the ordering of sheets {\it above $e_I$}.  That is, we place $m_{i,j}^J$ into the $(i',j')$-position where $S^J_{i} \subset \overline{S^I_{i'}}$ and $S^J_j \subset \overline{S^I_{j'}}$.  In addition, if $e_J$ is a $0$-cell, for each  pair of sheets $S^I_k$ and $S^I_l$ with $k<l$ that meet one another at a cusp over $e_J$ we place a $1$ at the $(k,l)$-entry of $M_I(e_J)$.  All other entries of the $M_I(e_J)$ are $0$.  Note that $M(e_I) = M_I(e_I)$.  Put another way, $M_I(e_J)$ is obtained from $M(e_J)$ by first taking the direct sum (block sum) with some number of $2\times 2$-matrices of the form $\begin{pmatrix} 0 & 1 \\ 0 & 0 \end{pmatrix}$ (resp.,  $\begin{pmatrix} 0 & 0 \\ 0 & 0 \end{pmatrix}$) if $e_J$ is a 0-cell (resp., a $(>0)$-cell), one for each pair of sheets of $\Lambda(e_I)$ that meet at a cusp edge above $e_J$, and then conjugating by a permutation matrix so that the orderings of rows and columns matches the ordering of sheets in $\Lambda(e_I)$.  In addition, for each $e_I$ define a diagonal matrix of signs, 
\[
\Theta_I = \mathit{diag}((-1)^{\mu(S_1)}, \ldots, (-1)^{\mu(S_{N_I})})
\]
 where $S_1, \ldots, S_{N_I} \in \Lambda(e_I)$. 

We define the differential of generators, $\partial m_{i,j}^I$, so that when $\partial$ is applied entry-by-entry the following matrix equation holds where $I = (i_0, \ldots, i_m)$,
\begin{equation} \label{eq:Dmatrix}
\partial  M(e_I)= \Theta_I \cdot \left(-\sum_{k = 0}^{m} (-1)^k M_I(e_{i_0 \cdots \widehat{i_k} \cdots \cdots i_m}) +  \sum_{k = 0}^{m} (-1)^k M_I(e_{i_0\cdots i_k}) \cdot M_I(e_{i_k\cdots i_m})\right).
\end{equation}
The equation (\ref{eq:Dmatrix}) can be stated more succinctly in terms of the cobar construction as 
\begin{equation} \label{eq:Dmatrix2}
\partial \circ M_I(e_I) = \Theta_I \cdot (M_I \circ D_s)(e_I)
\end{equation}
where $M_I$ is extended to a (non-grading preserving) unital algebra homomorphism $M_I: \Omega C_*(\overline{e_I}) \rightarrow \mathit{Mat}(n, \mathcal{A})$.  

\begin{theorem}  \label{thm:dg}
\begin{enumerate}
\item  There is a unique degree $-1$ derivation $\partial:\mathcal{A} \rightarrow \mathcal{A}$  (a linear map satisfying $\partial(xy) = \partial(x) y +(-1)^{|x|}x \partial(y)$) such that (\ref{eq:Dmatrix}) holds for all $e_I$.
\item $\partial^2 =0$.
\end{enumerate}
\end{theorem}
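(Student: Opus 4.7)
The plan for Part (1) is to exploit freeness of $\mathcal{A}$: since $\mathcal{A}$ is the free unital graded associative $\Z$-algebra on the generators $m_{i,j}^I$, any graded derivation is uniquely determined by its values on the generators. Declare $\partial m_{i,j}^I$ to be the $(i,j)$-entry of the right-hand side of (\ref{eq:Dmatrix}) and extend to $\mathcal{A}$ by the graded Leibniz rule. Two verifications are then required. First, the prescribed value has degree $\mu(S_i)-\mu(S_j)+\|e_I\|-1=|m_{i,j}^I|-1$; this is a direct cobar grading count using $\|e_{i_0\cdots\widehat{i_k}\cdots i_m}\|=\|e_I\|-1$ and $\|e_{i_0\cdots i_k}\|+\|e_{i_k\cdots i_m}\|=\|e_I\|-1$, together with the fact that $\Theta_I$ has degree zero. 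Second, at positions $(i,j)$ where $M(e_I)_{i,j}=0$ — on or below the diagonal, or at a crossing of two sheets above $e_I$ — the right-hand side of (\ref{eq:Dmatrix}) must also vanish: strict upper-triangularity of each $M_I(e_K)$ handles the diagonal and below-diagonal case, while at a crossing the relevant pair of sheets either continues to cross or cusps away when restricted to any face $e_K\leq e_I$, forcing every term on the right-hand side to vanish at that position.

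For Part (2), because $|\partial|=-1$ is odd, a direct application of the graded Leibniz rule twice yields $\partial^2(xy)=\partial^2(x)y+x\partial^2(y)$, so $\partial^2$ is itself a graded derivation of degree $-2$. Since $\mathcal{A}$ is freely generated, $\partial^2=0$ is therefore equivalent to $\partial^2 M(e_I)=0$ entrywise for every $e_I\in\mathcal{E}$. The main step is the following extension of (\ref{eq:Dmatrix2}) to all sub-faces:
\begin{equation}\label{eq:keyid}
\partial\bigl(M_I(e_J)\bigr)=\Theta_I\cdot M_I\bigl(D_s e_J\bigr)\qquad\text{for every } e_J\leq e_I.
\end{equation}
Granting (\ref{eq:keyid}), the graded Leibniz rule applied entrywise to matrix products extends this identity to all of $\Omega C_*(\overline{e_I})$: the Koszul sign $(-1)^{|A_{i,\ell}|}$ produced by entrywise Leibniz factors as $(-1)^{\|e_J\|}(-1)^{\mu(S_i)}(-1)^{\mu(S_\ell)}$ (the cusp-$1$ entries have degree $0$, consistent with the Maslov shift of $1$ across a cusp), and the two $\mu$-contributions assemble into a $\Theta_I$-conjugation which collapses against the $\Theta_I$ coming from (\ref{eq:keyid}) via $\Theta_I^2=\mathrm{Id}$, leaving precisely the sign $(-1)^{\|e_J\|}$ prescribed by the graded Leibniz rule for $D_s$. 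Therefore
\[
\partial^2 M(e_I)=\Theta_I\cdot\partial\bigl(M_I(D_s e_I)\bigr)=\Theta_I^2\cdot M_I\bigl(D_s^2 e_I\bigr)=M_I(0)=0,
\]
using $D_s^2=0$ in the cobar complex.

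The verification of (\ref{eq:keyid}) on a single $e_J$ splits according to whether the position $(i',j')\in\{1,\dots,N_I\}^2$ lies in the $\Lambda(e_J)$-block of $M_I$ — meaning that $S_{i'}^I,S_{j'}^I\in\Lambda(e_J)$ under the injection from Property~\ref{ob:closure} — or outside it. Inside the block, using $\mu(S_i^J)=\mu(S_{i'}^I)$, the identity reduces to the entrywise comparison $M_I(e_K)|_{\mathrm{block}}=M_J(e_K)$ for each $e_K\leq e_J$, which holds because the cusp-$1$ entries of $M_I(e_K)$ inside the block coincide with those of $M_J(e_K)$, any extra cusp-$1$ of $M_I(e_K)$ being attached to a pair of sheets in $\Lambda(e_I)\setminus\Lambda(e_J)$ and hence outside the block. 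Outside the block, $\partial M_I(e_J)_{i',j'}=0$ since the underlying entry is either $0$ or a cusp $1$ and both are killed by $\partial$; on the right-hand side, each product term $M_I(e_{\ldots i_k})\cdot M_I(e_{i_k\ldots})$ vanishes at $(i',j')$ because the sheet configuration outside the block kills at least one factor, while the codimension-one face contributions $-\sum_k(-1)^kM_I(e_{i_0\cdots\widehat{i_k}\cdots i_m})_{i',j'}$ either vanish individually (when no face is a $0$-cell) or cancel in pairs (for $|e_J|=1$, where the two $0$-cell endpoints each register the same cusp pair with opposite signs). The main obstacle is precisely this outside-the-block bookkeeping: as $\dim e_J$ grows the product decompositions in $D_s e_J$ proliferate and one must simultaneously track which pairs of $\Lambda(e_I)\setminus\Lambda(e_J)$ cusp above which sub-faces of $e_J$, the alternating signs in $D_s$, and the Koszul signs coming from graded matrix multiplication. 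Once this cancellation is verified uniformly, Theorem~\ref{thm:dg} follows as sketched.
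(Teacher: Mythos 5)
Your proposal tracks the paper's proof in outline: Part (1) is handled by freeness plus a vanishing check at positions where $M(e_I)_{i,j}=0$, and Part (2) rests on establishing $\partial\circ M_I=\Theta_I\cdot(M_I\circ D_s)$ on generators (your identity~(\ref{eq:keyid})) and then extending by the graded Leibniz rule, using the sign identity $\sigma(M_I(e_J))=(-1)^{\|e_J\|}\Theta_I M_I(e_J)\Theta_I$ — all of which matches the paper. But the verification of (\ref{eq:keyid}) itself is where you stop short, and you say so: ``Once this cancellation is verified uniformly, Theorem~\ref{thm:dg} follows as sketched.'' That is a genuine gap, not a routine detail, and the entry-by-entry case analysis you sketch (tracking which pairs cusp above which sub-faces, with alternating and Koszul signs, for arbitrary $\dim e_J$) is precisely the bookkeeping that threatens to spiral.

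The missing idea is a structural one. The cusping pairs of $\Lambda(e_I)$ over $e_J$ do not need to be handled position-by-position: they can be organized, after a permutation $P_{I,J}$, into a block direct sum
\[
M_I(y)=P_{I,J}\bigl(M_J(y)\oplus N(y)\oplus\cdots\oplus N(y)\bigr)P_{I,J}^{-1},\qquad y\in\Omega C_*(\overline{e_J}),
\]
where $N$ is the unital algebra homomorphism $\Omega C_*(\overline{e_J})\to\mathit{Mat}(2,\Z)$ sending a $0$-simplex to $\begin{pmatrix}0&1\\0&0\end{pmatrix}$, every higher-dimensional simplex to $0$, and (being an algebra map, with the image nilpotent of order two) every length-$\geq 2$ monomial to $0$. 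The entire ``outside the block'' cancellation then reduces to the one-line computations $N\circ D_s=0$ and $\partial\circ N=0$: the first holds because $N$ kills length-$\geq 2$ monomials and simplices of dimension $>1$, so one only checks $N(D_s e_K)=-N(\partial_s e_K)=0$ for $\dim e_K=1$, and the second holds because $N$ is scalar-valued. Once this is in place, (\ref{eq:keyid}) for a generator $e_J$ follows in a few lines from $\partial\circ M_J=\Theta_J\cdot(M_J\circ D_s)$ and $P_{I,J}(\Theta_J\oplus\Theta')P_{I,J}^{-1}=\Theta_I$. Your ``inside the block'' observation is exactly the $M_J$-block of this decomposition, but without isolating the $2\times 2$ $N$-blocks the ``outside'' contributions remain a diffuse collection of positions for which a uniform argument is hard to see. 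Supplying this decomposition would close the gap and make your proof complete.
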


\begin{proof}
As each generator $m^I_{i,j}$ appears exactly once as an entry of $M(e_I)$, to prove (1) we just need to argue that whenever the $(i,j)$-entry of $M(e_I)$ is $0$, the $(i,j)$-entry of the RHS of (\ref{eq:Dmatrix}) is also $0$.  (That $\partial$ has degree $-1$ on generators is verified in a straightforward manner.)  Since all matrices that appear are upper-triangular, it suffices to consider the case where $i<j$.  Then, the $(i,j)$-entry of $M(e_I)$ can only be $0$ if $z(S_i) = z(S_j)$ for $S_i,S_j \in \Lambda(e_{I})$.  If this occurs, then for all $i \leq k \leq j$ and all $e_J \subset \overline{e_I}$ the sheets of $\Lambda(e_J)$ contained in the closure of $S_i$, $S_k$, and $S_j$ 
must also coincide in the front projection.  Moreover, none of $S_i, S_k,$ and $S_j$ can meet one another at a cusp above $e_J$, and it follows that the $(i,k)$ and $(k,j)$ entries of the matrices $M_I(e_J)$ also vanish.  Consequently, the $(i,j)$-entries of all of the matrices $M_I(e_{i_0 \cdots \widehat{i_k} \cdots \cdots i_m})$ and $M_I(e_{i_0\cdots i_k}) \cdot M_I(e_{i_k\cdots i_m})$ appearing on the right hand side are $0$.

As a preliminary to proving (2), we extend $\partial$ entry-by-entry to a derivation on $\mathit{Mat}(n, \mathcal{A})$, and note that for any $X,Y \in \mathit{Mat}(n,\mathcal{A})$ with $X =(x_{i,j})$ having entries of homogeneous degree the Liebniz rule translates to 
\begin{equation} \label{eq:XY}
\partial(XY) = \partial(X) \cdot Y + \sigma(X) \cdot \partial(Y) 
\end{equation}
where $\sigma(X) = ( (-1)^{|x_{i,j}|} x_{i,j} )$.  Note in particular that for any $e_J \subset \overline{e_I}$, 
\begin{equation}  \label{eq:sigma}
\sigma(M_I(e_J)) =  (-1)^{\|e_J\|} \Theta_I M_I(e_J) \Theta_I.
\end{equation} 
[Here, it is important to observe that when $\dim e_J = 0$ and the $(k,l)$-entry of $\sigma(M_I(e_J))$ is $1$ due to sheets $S_k, S_l \in \Lambda(e_I)$ meeting one another at a cusp above $e_J$, 
 the $(k,l)$ entry of $(-1)^{\|e_J\|} \Theta_I M_I(e_J) \Theta_I$ is 
\[
(-1)^{\|e_J\|} (-1)^{\mu(S_k)} \cdot 1 \cdot (-1)^{\mu(S_l)} = (-1)^{-1+\mu(S_k) + \mu(S_l)} = 1
\]
since $\mu(S_k)$ and $\mu(S_l)$ have opposite parity.]  In addition, since derivations annihilate scalars, for any scalar matrices $P,Q \in \mathit{Mat}(n, \Z)$ we have
\begin{equation} \label{eq:PYQ}
\partial(P Y Q) = P \partial(Y) Q.  
\end{equation}

\medskip

\noindent{\bf Claim:}  For any $x \in  \Omega C_*(\overline{e_I})$, 
\begin{equation} \label{eq:Claim1}
\partial \circ M_I(x) = \Theta_I \cdot (M_I \circ D_s)(x).  
\end{equation}

\medskip

Using the claim and (\ref{eq:PYQ}) we can compute 
\begin{align*}
\partial^2 \circ M(e_I) & = \partial \left( \Theta_I \cdot \left( M_I \circ D_s( e_I) \right) \right) = \Theta_I \cdot \left( (\partial \circ M_I)(D_s(e_I))\right)  = \Theta_I \cdot \left( \Theta_I \cdot (M_I \circ D_s^2)(e_I)\right) = 0.  
\end{align*}
This shows that $\partial^2$ vanishes on the generators of $\mathcal{A}$, and it follows that $\partial^2 =0$.  

We turn to proving the claim.  Since both sides of (\ref{eq:Claim1}) are $\Z$-linear in $x$, it suffices to verify the claim when $x$ is a product of simplices in $C_*(\overline{e_I})$.  This is done by induction on the word length of $x$.  

\medskip

\noindent {\bf Basis step:}   $x =e_J$.   We note from the construction of $M_I$ and $M_J$ that there is a permutation matrix $P_{I,J}$ such that for any $e_{K} \subset \overline{e_J} \subset \overline{e_I}$,
\begin{equation}
M_I(e_K) = P_{I,J}\left( M_J(e_K) \oplus N(e_K) \oplus \cdots \oplus N(e_K) \right)P_{I,J}^{-1} 
\end{equation}
where $N(e_K)$ is $\begin{pmatrix} 0 & 1 \\ 0 & 0 \end{pmatrix}$ (resp.  $\begin{pmatrix} 0 & 0 \\ 0 & 0 \end{pmatrix}$) when $\dim e_K = 0$ (resp. $\dim e_K > 0$) and the number of $N(e_K)$ terms in the block sum is the number of pairs of sheets in $\Lambda(e_I)$ that meet at cusps above $e_J$.  Extending $N$ to a unital algebra homomorphism $N:\Omega C_*(\overline{e_J}) \rightarrow \mathit{Mat}(2, \Z)$ we see that 
\begin{equation} \label{eq:M1}
M_I(y) = P_{I,J}\left( M_J(y) \oplus N(y) \oplus \cdots \oplus N(y) \right)P_{I,J}^{-1},  \quad \forall y \in \Omega C_*(\overline{e_J})
\end{equation}
(since both sides are unital algebra homomorphisms and they agree on a generating set), and
\begin{equation} \label{eq:N1}
N \circ D_s = 0 = \partial \circ N.
\end{equation}
[To verify the first equality, since $D_s$ is a derivation, it is enough to check that  $N \circ D_s (e_K) =0$ 
 for any simplex $e_K \in \Omega C_*(\overline{e_J})$.
 Note that $N$ vanishes on the monomials of length $2$, and the length $1$ terms in $D_se_K$ are $-\partial_s e_K$  (with $\partial_s$ the simplicial boundary operator).  Moreover, $N$ vanishes on all simplices with dimension $>0$, so it suffices to compute that when $\dim e_K =1$
\[
N \circ D_s(e_K) = N ( - \partial_s e_K) = -\begin{pmatrix} 0 & 1 \\ 0 & 0 \end{pmatrix} + \begin{pmatrix} 0 & 1 \\ 0 & 0 \end{pmatrix} = 0. \quad ]
\]
Now, using (\ref{eq:Dmatrix2}), (\ref{eq:PYQ}), (\ref{eq:M1}), and (\ref{eq:N1})  we compute
\begin{align*}
\partial M_I(e_J) & = P_{I,J}\left( \partial M_J(e_J) \oplus \partial N(e_K) \oplus \cdots \oplus \partial N(e_K) \right)P_{I,J}^{-1} \\
 & = P_{I,J}\left( \Theta_J M_J(D_se_J) \oplus 0 \oplus \cdots \oplus 0 \right)P_{I,J}^{-1} \\
 & = P_{I,J} (\Theta_J \oplus \Theta') \cdot \left( M_J(D_se_J) \oplus 0 \oplus \cdots \oplus 0 \right)P_{I,J}^{-1} \\ 
 & = P_{I,J} (\Theta_J \oplus \Theta') P_{I,J}^{-1}\cdot P_{I,J} \left( M_J(D_se_J) \oplus N(D_s e_J) \oplus \cdots \oplus N(D_s e_J) \right)P_{I,J}^{-1} \\
 & = \Theta_I \cdot (M_I \circ D_s)( e_J)
\end{align*}
where $\Theta'$ is the unique diagonal matrix of $\pm1$'s such that $P_{I,J} (\Theta_J \oplus \Theta') P_{I,J}^{-1} = \Theta_I$.  [This equation is satisfied on $\Theta_J$ entries since the Maslov potential has $\mu(S_i) = \mu(S_{i'})$ when $S_i \in \Lambda(e_J)$ and $S_{i'} \in \Lambda(e_I)$ have $S_i \subset \overline{S_{i'}}$.]

\medskip

\noindent {\bf Inductive Step:}   
 Suppose that $x = e_J x'$ and (\ref{eq:Claim1}) holds for $x'$.  Using (\ref{eq:XY}) and (\ref{eq:sigma}) we can compute
\begin{align*}
\partial \circ M_I(e_J x') & = \partial \circ M_I(e_J) \cdot M_I(x') + \sigma(M_I(e_J)) \cdot \partial \circ M_I(x')  \\
 & = \Theta_I \cdot(M_I\circ D_s(e_J)) \cdot M_I(x') + (-1)^{\|e_J\|}\Theta_I M_I(e_J) \Theta_I \cdot \Theta_I \cdot (M_I\circ D_s(x')) \\ 
 & = \Theta_I \cdot M_I\left( D_s(e_J) x' + (-1)^{\|e_J\|} e_J D_s(x') \right) = \Theta_I \cdot M_I \circ D_s( e_J x'). 
\end{align*}

\end{proof}

As we have seen, $\partial^2=0$ follows from the combinatorics of front projections and  
 that $D_s^2=0$ in the cobar complex.  What is not obvious is whether or not the homotopy type of $(\mathcal{A}, \partial)$ depends on the simplicial decomposition and is invariant under Legendrian isotopy.
When $\dim(\Lambda) = 2,$ however, with $\Z/2$-coefficients such invariance follows from the below theorem.

\begin{theorem} \label{thm:simplicial}
Suppose $\dim(\Lambda) = 2$.  Then, the simplicial DGA $(\mathcal{A}, \partial)$ with coefficients in $\Z/2$ is stable tame isomorphic to the Legendrian contact homology DGA of $\Lambda$ from \cite{Ch, EES07}.
\end{theorem}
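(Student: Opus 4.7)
The plan is to show that when $\dim(\Lambda) = 2$ and coefficients are reduced mod $2$, the simplicial DGA $(\mathcal{A}, \partial)$ of $(\Lambda, \mu, \mathcal{E})$ coincides on the nose with the cellular DGA of $(\Lambda, \mathcal{E})$ from \cite{RuSu1}. Once this identification is in place, the theorem follows immediately by invoking the stable tame isomorphism between the cellular DGA and the Legendrian contact homology DGA established in \cite{RuSu2, RuSu25}.

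First I would match the underlying graded $\Z/2$-algebras. In both constructions the generators associated to a cell $e_I$ are indexed by pairs of sheets $S_i, S_j \in \Lambda(e_I)$ with $z(S_i) > z(S_j)$, and the grading $|m^I_{i,j}| = \mu(S_i) - \mu(S_j) + \|e_I\|$ agrees with the convention used in \cite{RuSu1}. Since a simplicial decomposition compatible with $\Lambda$ is in particular a polygonal decomposition in the sense of \cite{RuSu1}, the cellular DGA is defined for $(\Lambda, \mathcal{E})$ and its generators coincide with those of $\mathcal{A}$.

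Next I would verify that the mod $2$ reductions of the differentials also agree. The cellular DGA differential is given by a local formula at each cell that records (i) the matrices $M(e_J)$ for each boundary face $e_J \subset \overline{e_I}$, re-indexed by the sheets of $\Lambda$ above $e_I$ with cusp pairings inserted via $\bigl(\begin{smallmatrix} 0 & 1 \\ 0 & 0 \end{smallmatrix}\bigr)$ blocks, and (ii) monomial contributions coming from pairs of sub-simplices meeting along a common sub-face. Formula (\ref{eq:Dmatrix}) packages exactly this data via the cobar construction: the first sum $\sum_k M_I(e_{i_0 \cdots \widehat{i_k} \cdots i_m})$ records the boundary-face contributions together with the cusp pairings encoded in $M_I$, while the second sum $\sum_k M_I(e_{i_0\cdots i_k})\cdot M_I(e_{i_k\cdots i_m})$ realizes the Alexander--Whitney splittings indexed by the intermediate vertex $e_{i_k}$. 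Reducing mod $2$ kills all signs $(-1)^k$ and the matrices $\Theta_I$, and what remains is precisely the local cellular formula of \cite{RuSu1} evaluated on the simplicial decomposition $\mathcal{E}$.

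The main obstacle is the cell-by-cell bookkeeping comparison of (\ref{eq:Dmatrix}) mod $2$ with the cellular DGA formula. One must inspect each local model of $\Lambda$ above a cell (smooth sheets only, cusp edges over $1$-cells, and cusp-sheet intersections or triple points over $0$-cells) and confirm that both sides produce the same $\Z/2$-monomials in the entries of the boundary matrices. This is not conceptually difficult — it amounts to checking that the Alexander--Whitney coproduct on a simplex and its reduced boundary implement exactly the face-stacking rules that define the cellular differential in the simplicial case — but it is the only non-formal step. Once this identification of DGAs is recorded, Theorem \ref{thm:simplicial} is a direct consequence of the main results of \cite{RuSu2, RuSu25}.
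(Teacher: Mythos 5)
Your approach is exactly the paper's: identify the mod $2$ simplicial DGA with the cellular DGA of \cite{RuSu1} and then cite the stable tame isomorphism from \cite{RuSu2,RuSu25}. The one detail you leave implicit that the paper makes explicit is that the cellular DGA requires choices of $1$-cell orientations and initial/terminal vertices for $2$-cells; the matching holds when each $e_{i_0i_1}$ is oriented from $e_{i_1}$ to $e_{i_0}$ and each $e_{i_0i_1i_2}$ is given initial vertex $e_{i_2}$ and terminal vertex $e_{i_0}$, after which the two formulas coincide term by term.
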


\begin{proof}
This is a re-interpretation of \cite[Theorem 1.1]{RuSu1} when all cells are simplices.  
The simplicial DGA agrees precisely with the cellular DGA from \cite{RuSu1} when 
the additional choices needed to define the cellular DGA  are made as follows:   Each $1$-cell $e_{i_0i_1}$ with $i_0 < i_1$ is oriented from $e_{i_1}$ to $e_{i_0}$.  Each $2$-cell $e_{i_0i_1i_2}$ with $i_0< i_1 <i_2$ is assigned the initial vertex $e_{i_2}$ and the terminal vertex $e_{i_0}$.  
\end{proof}

We conjecture that in $1$-jet spaces of any dimension the simplicial DGA is stable tame isomorphic to the Legendrian contact homology DGA for Legendrians   having mild front singularities.

\begin{remark} When $\dim \Lambda =2$, the construction of the cellular DGA in \cite{RuSu1} uses polygonal decompositions with $2$-cells having any number of edges, rather than just triangles.  In the $2$-dimensional case, with little effort, the later constructions of this article can be generalized to the case of polygonal decompositions allowing $2$-cells with any number of edges.
\end{remark}

\section{Combinatorial constructible sheaves}
\label{sec:ConstructibleSheaves}

Throughout this section, we assume that $\dim(\Lambda) = 2.$  
In Section \ref{sec:handle}, we introduce a specific stratification $\mathcal{S}$ of $J^0M$ arising from a choice of compatible simplicial decomposition for $\Lambda$.   
In Section \ref{sec:CategoryDefinition}, after a brief review of constructible sheaves and their singular support following \cite{STZ},    we associate a category of combinatorial sheaves to $\Lambda$.  This is a $2$-dimensional analog of the combinatorial sheaf category for $1$-dimensional Legendrians from \cite{STZ};  objects are functors that assign chain complexes to the strata of $\mathcal{S}$, subject to local restrictions dictated by $\Lambda$.    
 Finally, in Section \ref{sec:CategoryEquivalence} we prove that this category of combinatorial  sheaves is quasi-equivalent to the category of constructible sheaves with singular support determined by $\Lambda$.

\subsection{Stratifications and polygonal decompositions} 

\label{sec:handle}
Let $\Lambda$ be a Legendrian surface in $J^1M$ with mild front singularities together with a simplicial decomposition $\mathcal{E}$ of $M$ compatible as in Definition \ref{def:compatible}.  Moreover, in this section and Section \ref{sec:SheafConstruction} we add the additional requirement that the closed star of any simplex $e_I$, i.e. the union of all closed simplices that contain $e_I$ as a face, is a closed disk.  This holds after possibly applying  barycentric subdivision.   
 We extend the stratification of $\pi_{xz}(\Lambda) \subset M \times \R$ from (\ref{eq:xzStrat}) to one of  $M \times \R$
\begin{equation}   \label{eq:xzstrat}
M \times \R = \Lambda^F_{-1} \sqcup \Lambda^F_0 \sqcup \Lambda^F_1 \sqcup \Lambda^F_2
\end{equation}
by setting $\Lambda^F_{-1} = (M \times \R) \setminus \pi_{xz}(\Lambda)$.  We refer to this decomposition as the {\bf $\Lambda^F$-stratification} of $M \times \R$.

\begin{figure}
\labellist
\small
\endlabellist
\centerline{\includegraphics[scale=1]{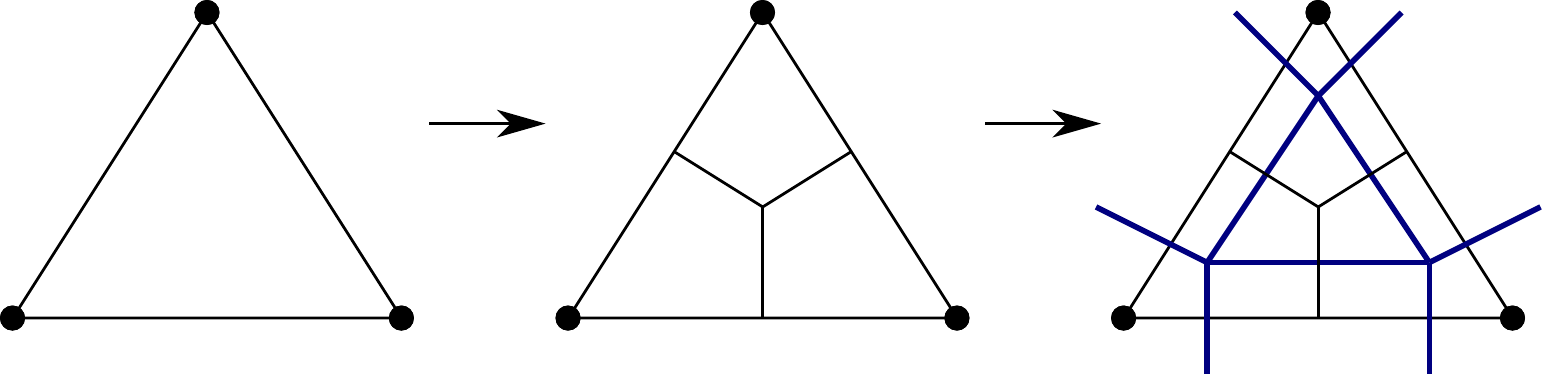}}

\quad

\caption{From left to right, a $2$-simplex in $\mathcal{E}$, its pair subdivision, and the associated handle decomposition $H$ (with edges pictured in blue).}
\label{fig:PairSub}
\end{figure}

Given $\mathcal{E}$, we construct an {\bf associated handle decomposition},  $H$. Viewed as a polygonal decomposition, $H$ is formed via the following two step process pictured in Figure \ref{fig:PairSub}:
\begin{enumerate}
\item First, take the {\bf pair-subdivision} of $\mathcal{E}$ by subdividing each $2$-simplex, $e_I \in \mathcal{E}$, into $3$ squares having vertices at the barycenters of the faces of $e_I$.  Note that the cells of the pair-subdivision are in bijection with ordered pairs to $\bfe:=(e_I, e_J)$ of simplices in $\mathcal{E}$ with $e_I \leq e_J$. Explicitly, $\bfe$ is the convex hull of barycenters of simplices $e_K$ with $e_I \leq e_K \leq e_J$.
\item Take $H$ to be a polygonal decomposition of $M$ dual to the pair-subdivision of $\mathcal{E}$.  The cells of $H$ can be taken to be smooth.
\end{enumerate}
  If $e_I \le e_J,$ then denote by $h(\bfe)$ the cell of $H$ dual to $\bfe:=(e_I, e_J)$.    
Note that $\dim h(\bfe) = \dim(\Lambda) - (\dim e_J -\dim e_I).$ See Figure \ref{fig:HLabel}.  
When viewing $H$ as a handle decomposition, 
index $k$ handles are in bijection with the $k$-simplices of $\mathcal{E}$; the handle associated to a simplex $e_I$  is the closure of $h(e_I,e_I)$.

\begin{figure}
\labellist
\small
\pinlabel $e_0$ [tr] at 46 52
\pinlabel $e_1$ [tr] at 200 52
\pinlabel $e_2$ [br] at 122 180
\pinlabel $e_{01}$ [t] at 128 54 
\pinlabel $e_{02}$ [br] at 88 118
\pinlabel $e_{12}$ [bl] at 164 122
\pinlabel $e_{012}$  at 128 100
\pinlabel $(e_0,e_0)$ [t] at 330 10
\pinlabel $(e_0,e_{01})$ [t] at 406 10
\pinlabel $(e_1,e_{012})$ [t] at 510 -2
\pinlabel $(e_{02},e_{012})$ [b] at 326 148
\pinlabel $(e_{12},e_{12})$ [b] at 492 162
\pinlabel $(e_{012},e_{012})$ [b] at 460 200
\endlabellist
\centerline{\includegraphics[scale=1]{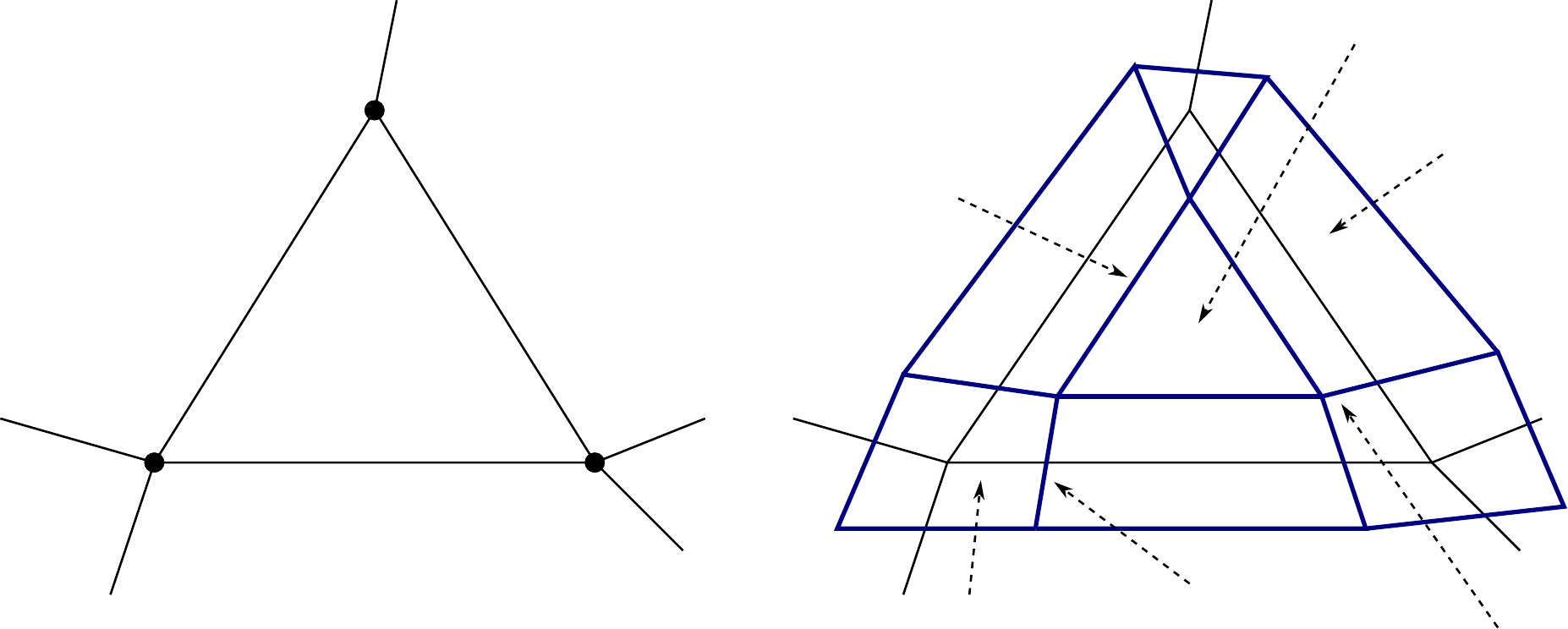}}

\quad

\caption{The labeling of cells $h(\bfe)$ by ordered pairs of simplices $\bfe =(e_I,e_J)$ is indicated (right) near a $2$-simplex $e_{012}$ of $\mathcal{E}$ with vertices $e_0, e_1, e_2$ (left).}
\label{fig:HLabel}
\end{figure}

\begin{definition} 
\label{def:AFS}
Given $\mathcal{E}$ the {\bf associated front stratification} $\mathcal{S}$ is the stratification of $J^0M = M \times \R$ whose strata are the connected components of the intersections $\Lambda^F_{k} \cap (h(\mathbf{e})\times \R)$,  for $-1 \leq k \leq 2$. See (\ref{eq:xzstrat}). 
\end{definition}    
In other words, $\mathcal{S}$ is obtained from lifting the cells of $H$ to products $h(\mathbf{e}) \times \R$ and then subdividing to incorporate the natural stratification of $\pi_{xz}(\Lambda)$.

The stratification $\mathcal{S}$ has an {\bf associated poset category}, $P(\mathcal{S})$, whose objects are the strata of $\mathcal{S}.$ The partial ordering has $s_1 \le s_2$ when  $s_1 \subset \overline{s_2}$, and  in $P(\mathcal{S})$ there is a unique morphism from $s_1$ to $s_2$ if and only if $s_1  \le {s_2}$.  Non-identity morphisms in $P(\mathcal{S})$ are sometimes referred to as {\bf generization maps}.  

\begin{definition} \label{def:downward} We say that a generization map of strata, $s_1 \rightarrow s_2$, is {\bf downward} if there exists $x_0 \in M$ such that $s_1\cap(\{x_0\} \times \R)$ is a single point that is bordered below by an interval from $s_2 \cap(\{x_0\} \times \R)$.  
\end{definition}

\subsection{The dg-categories $\mathbf{Fun}^\bullet_\Lambda(\mathcal{S}, \mathbb{K})$ and $\ShLambda$}
\label{sec:CategoryDefinition}

We begin by recalling two quasi-equivalent dg-categories $\ShS$ and  $\mathbf{Fun}^\bullet(\mathcal{S},\mathbb{K})$ associated to the stratification $\mathcal{S}$ of $M \times \R$ as in \cite[Sections 3.1 and  3.3]{STZ}.  Both categories are instances of a dg-derived category construction that associates to an abelian category $\mathcal{A}$ a dg-category $\mathcal{D}_{dg}(\mathcal{A})$ having cochain complexes in $\mathcal{A}$ as objects and so that the cohomology category (where morphism spaces are $H^0$ of the morphism spaces of $\mathcal{D}_{dg}(\mathcal{A})$) coincides with the usual derived category $\mathcal{D}(\mathcal{A})$.  Assuming $\mathcal{A}$ has enough injectives, any left exact functor $F:\mathcal{A} \rightarrow \mathcal{B}$ can be right derived to a dg-quasi-functor $RF: \mathcal{D}_{dg}(\mathcal{A}) \rightarrow \mathcal{D}_{dg}(\mathcal{B})$ by applying $F$ naively to the dg-category of chain complexes $\mathit{Ch}_{dg}(\mathcal{I})$ where $\mathcal{I} \subset \mathcal{A}$ is the subcategory of injective objects.     
  See \cite{Drinfeld} as well as \cite[Section 2]{Nadler} for a concise discussion.  In this section, we will follow the convention of \cite{STZ}: All left exact functors, $F$, are right derived when applied to $\mathcal{D}_{dg}(\mathcal{A})$, and are denoted with the same notation $F$ rather than by $RF$.

\begin{definition} Let $\mathbb{K}$ be a field.  Let $\ShS$ be the dg-derived category of  constructible (with respect to $\mathcal{S}$ as in Definition \ref{def:AFS}) sheaves of $\mathbb{K}$-modules on  $M\times\R$ with bounded cohomology.
\end{definition}
  We will often refer to objects in $\ShS$ simply as sheaves even though they are in fact complexes of sheaves on $M\times \R$.

\begin{definition} Let  $\mathbf{Fun}^\bullet(\mathcal{S},\mathbb{K})$ be the dg-derived category associated to the abelian category of functors from $P(\mathcal{S})$ to $\mathbb{K}$-mod.
\end{definition}

  We will view objects of $\mathbf{Fun}^\bullet(\mathcal{S},\mathbb{K})$ as functors from $P(\mathcal{S})$ to $\mathit{Ch}(\mbox{$\mathbb{K}$-mod})$, the  category of cochain
 complexes of $\mathbb{K}$-modules with cochain maps.
Given $F \in \mathbf{Fun}^\bullet(\mathcal{S}, \mathbb{K})$, 
we write $F(s_1 \rightarrow s_2):F(s_1) \rightarrow F(s_2)$ for the cochain map which is the image of the generization map $s_1 \rightarrow s_2$.

For a stratum $s \in \mathcal{S},$ let the {\bf star} of $s$ be the union of strata in $\mathcal{S}$ that contain $s$ in their closure. We denote this by $s^*.$
\begin{proposition}  The functor
$$\Gamma_{\mathcal{S}}:\ShS \rightarrow \mathbf{Fun}^\bullet(\mathcal{S}, \mathbb{K}), \quad \mathcal{F} \mapsto [s \mapsto \Gamma(s^*; \mathcal{F})] 
$$
is a quasi-equivalence.
\end{proposition}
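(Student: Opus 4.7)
The plan is to deduce this quasi-equivalence from the general principle that for a sufficiently regular (conical) stratification, constructible sheaves are equivalent to representations of the exit-path poset. I would follow the strategy of \cite[Section 3.3]{STZ}, which handles the $1$-dimensional analog, and verify that the arguments extend to the $2$-dimensional setting at hand.

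First I would check that $s^{\ast}$ is open in $M\times\R$ for every stratum $s\in\mathcal{S}$. This is a standard consequence of the frontier condition for a stratification: the complement of $s^{\ast}$ is the union of strata that do not have $s$ in their closure, which is closed. Consequently, whenever $s_1\leq s_2$ one has $s_2^{\ast}\subseteq s_1^{\ast}$, so restriction of sections yields a well-defined cochain map $\Gamma(s_1^{\ast},\mathcal{F})\to \Gamma(s_2^{\ast},\mathcal{F})$ and $\Gamma_{\mathcal{S}}$ is a well-defined dg-functor after right-deriving.

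Next, I would construct an explicit quasi-inverse $R:\mathbf{Fun}^\bullet(\mathcal{S},\mathbb{K})\to \ShS$. Given $F$, set $R(F)$ to be the (derived) sheafification of the presheaf
\[
U\mapsto \lim_{s:\ s^{\ast}\supseteq U} F(s),
\]
whose value on the basic open $s^{\ast}$ is precisely $F(s)$, with restrictions induced by the generization maps of $F$. The two natural transformations $\Gamma_{\mathcal{S}}\circ R\Rightarrow \mathrm{id}$ and $\mathrm{id}\Rightarrow R\circ \Gamma_{\mathcal{S}}$ are quasi-isomorphisms provided, for any constructible $\mathcal{F}$ and any $x\in s$, the canonical map $\Gamma(s^{\ast},\mathcal{F})\to \mathcal{F}_x$ is a quasi-isomorphism. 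This in turn reduces to $s^{\ast}$ being a cofinal contractible system of neighborhoods of $s$ over which $\mathcal{F}$ is locally constant in the appropriate derived sense.

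The main obstacle, and the only place that uses the specifics of the set-up, is verifying that $\mathcal{S}$ is conical: each point $x\in s$ must have a fundamental system of open neighborhoods modeled on the product of $s$ with an open cone on a lower-dimensional stratified link. This requires a case-by-case inspection of the local models for the handle decomposition $H$ together with the standard local models for mild front singularities from Section \ref{sec:Mild}, namely smooth sheets, cusp edges, transverse double curves, triple points, and cusp-sheet intersections. In each case one writes the neighborhood as a product of cells $h(\mathbf{e})$ with an open cone on a stratified circle (or a small further refinement thereof), which makes conicality manifest. Once conicality is established, the quasi-equivalence follows from the standard microlocal machinery (see \cite[Section 3.3]{STZ} or Kashiwara--Schapira) applied exactly as in the $1$-dimensional case.
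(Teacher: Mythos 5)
Your proposal takes a different and much longer route than the paper. The paper simply cites \cite[Proposition 3.9]{STZ}, which applies whenever the stratification $\mathcal{S}$ is a \emph{regular cell complex} in the sense of \cite{STZ}, meaning that every stratum is contractible and the star of every stratum is contractible. The paper observes that these hypotheses hold precisely because of the extra standing assumption (made at the start of Section \ref{sec:handle}) that the closed star of every simplex $e_I \in \mathcal{E}$ is a closed disk, which can always be achieved after barycentric subdivision. You instead propose to rebuild the result from scratch by constructing an explicit quasi-inverse and verifying conicality case-by-case. This is admissible in principle, but it is essentially re-deriving the proof of the STZ proposition rather than invoking it.

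There is a genuine gap in your argument, however. You identify conicality as \emph{the} obstacle, but conicality alone does not give what you need. You claim the natural transformations are quasi-isomorphisms ``provided... the canonical map $\Gamma(s^{\ast},\mathcal{F})\to \mathcal{F}_x$ is a quasi-isomorphism,'' and you say this ``reduces to $s^{\ast}$ being a cofinal contractible system of neighborhoods.'' But $s^*$ is a single open set, not a system, and for $\Gamma(s^*;\mathcal{F})\to\mathcal{F}_x$ to be a quasi-isomorphism you need $s^*$ itself to be contractible (and indeed to deformation-retract, stratum-preservingly, onto a small conical neighborhood of $x$). Conicality near each point gives you good local models, but it does not by itself guarantee that the \emph{global} star $s^*$ is contractible: if the triangulation $\mathcal{E}$ were poorly chosen, $s^*$ could, for instance, wrap around and be homotopy equivalent to a circle. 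This is exactly why the paper adds the hypothesis on $\mathcal{E}$, and your proposal never identifies, uses, or replaces that hypothesis. To close the gap you must verify that each $s^*$ is contractible, which in the paper's framework follows from the assumption on closed stars of simplices in $\mathcal{E}$.
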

\begin{proof}
This is a special case of Proposition 3.9 of \cite{STZ} since $\mathcal{S}$ is a {\it regular cell-complex} in the terminology of \cite{STZ}, i.e. every stratum is contractible and the star of each stratum is contractible.  [This is due to the extra assumption on $\mathcal{E}$ that the star of each simplex is a disk.]
\end{proof}

\subsubsection{Singular support and the category $\ShLambda$}

A subcategory of $\ShLambda \subset \ShS$ is defined by imposing singular support conditions coming from the Legendrian $\Lambda$.  We review the relevant definitions following \cite{STZ}.

In the stratified Morse theory  \cite{Massey} for a smooth function $f:M \times \R \rightarrow \R,$ a non-degenerate critical point $q$ of $f$ (in the stratified sense) is a non-degenerate critical point of $f|_{s_0}$ (in the usual Morse sense) for some stratum $s_0 \in \mathcal{S},$
such that for all strata
$s_0 \le s_\alpha$ there exists 
$v \in T_q(M \times \R) \cap \overline{T s_\alpha}$ with $df_q(v) \ne 0.$
Fix a metric $g$ on $J^0M$ and let $B_\delta(x) \subset J^0M$ denote the open ball of radius $\delta$ centered at $x.$
For $\mathcal{F} \in \ShS$ and sufficiently small $0 < \epsilon \ll \delta$  we can define the {\bf{Morse group}} $Mo_{q,f}(\mathcal{F}),$ independent of $\epsilon, \delta,$ to be the cone of 
\begin{equation}  \label{eq:MorseGroup}
\Gamma(f^{-1}(-\infty, f(q)+\epsilon) \cap B_\delta(q) ; \mathcal{F}) \rightarrow \Gamma(f^{-1}(-\infty, f(q)-\epsilon) \cap B_\delta(q); \mathcal{F}).
\end{equation}

A cotangent vector $\xi \in T^*_q(M\times \R)$ is called a {\bf characteristic vector} if there exists a stratified Morse function $f$ with $df_q = \xi$ such that $Mo_{q,f}(\mathcal{F}) \neq 0.$  In fact, up to grading shift,  the Morse group depends only on $\xi$, so that this condition is independent of the choice of stratified Morse function $f$, cf. \cite[Section 3.1.2]{STZ}.  
The {\bf singular support}, $SS(\FF) \subset T^*(M\times \R)$, is the closure of the set of characteristic vectors.
Note that $SS(\FF)$ depends only on $\FF$ and not on the choice of stratification $\mathcal{S}$ with respect to which  $\FF$ is constructible, while the collection of characteristic vectors can depend on $\mathcal{S}$.  [Eg., when $\xi$ annihilates $\overline{Ts_\alpha}$ for some $s_0 \leq s_\alpha$ there are no stratified Morse functions with $df_q = \xi$.]  As stated in \cite[Item (1), p.1050]{STZ}, any $\mathcal{F} \in \ShS$ has its singular support contained in the union of conormals to the strata of $\mathcal{S}$, i.e.
\[
SS(\mathcal{F}) \subset \bigcup_{s_0 \in \mathcal{S}} (Ts_0)^\perp,  \quad \mbox{where } (Ts_0)^\perp = \{(q,\xi) \in T^*(M\times \R) \,|\, q \in s_0,  \, \xi|_{T_qs_0} = 0\}.
\]

The following Microlocal Morse Lemma \cite[Section 5.4]{KS} will be used later to compute Morse groups and hence singular support.
\begin{lemma}
\label{lem:Microlocal}
Suppose $f \in C^1(M\times \R)$ is proper on the support of $\FF,$ and for all $q \in f^{-1}([a,b)),$
 $df_q \notin SS(\FF).$  Then the following restriction map is a quasi-isomorphism
 \[
 \Gamma(f^{-1}(-\infty, b); \FF) \rightarrow  \Gamma(f^{-1}(-\infty, a); \FF).
 \]
\end{lemma}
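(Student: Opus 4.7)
The plan is to run a non-characteristic deformation argument, continuously lowering the level from $t = b$ down to $t = a$ and using the singular support hypothesis to show each infinitesimal step is a quasi-isomorphism. Let $U_t := f^{-1}(-\infty, t)$ and define
$$T := \{ t \in [a,b] \mid \Gamma(U_b;\FF) \to \Gamma(U_t;\FF) \text{ is a quasi-isomorphism}\}.$$
Since $b \in T$ trivially, it suffices to show that $T$ is both open and closed in $[a,b]$, forcing $a \in T$.

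For openness, fix $t_0 \in T$ with $t_0 > a$.  At any $q \in f^{-1}(t) \cap \mathrm{supp}(\FF)$ for $t$ slightly below $t_0$ (so that $t \in [a,b)$), the hypothesis $df_q \notin SS(\FF)$ forces $Mo_{q,f}(\FF) \simeq 0$, meaning by (\ref{eq:MorseGroup}) that the cone of
$$\Gamma(U_{f(q)+\epsilon} \cap B_\delta(q);\FF) \to \Gamma(U_{f(q)-\epsilon} \cap B_\delta(q);\FF)$$
is acyclic for some $0 < \epsilon \ll \delta$. Properness of $f$ on $\mathrm{supp}(\FF)$ makes $f^{-1}(t) \cap \mathrm{supp}(\FF)$ compact for each $t$, so finitely many such balls $B_{\delta_i}(q_i)$ cover the relevant level set; after shrinking $\epsilon$ uniformly across this cover, a Mayer--Vietoris/\v{C}ech descent argument (using that sections vanish off $\mathrm{supp}(\FF)$) globalizes the local acyclic cones to show that $\Gamma(U_{t};\FF) \to \Gamma(U_{t'};\FF)$ is a quasi-isomorphism whenever $[t',t] \subset [a,b)$ is sufficiently short.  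Composing with the quasi-isomorphism $\Gamma(U_b;\FF) \to \Gamma(U_{t_0};\FF)$ coming from $t_0 \in T$ then produces a neighborhood of $t_0$ inside $T$; the case $t_0 = b$ is handled identically by deforming only downward.

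For closedness, if $t_n \to t_\infty$ with $t_n \in T$, properness of $f|_{\mathrm{supp}(\FF)}$ supplies the cofinality/Mittag-Leffler conditions needed to compute $\Gamma(U_{t_\infty};\FF)$ as the appropriate homotopy (co)limit of the $\Gamma(U_{t_n};\FF)$, and the quasi-isomorphism is inherited in the limit. The main technical obstacle is the descent step in the openness argument: one must assemble the pointwise Morse-group vanishings on a compact level set into a single global quasi-isomorphism, carefully tracking how the intersections of the balls $B_{\delta_i}(q_i)$ sit inside the sublevel sets $U_{t \pm \epsilon}$ as one varies both $t$ and $\epsilon$. This packaging is precisely the content of the non-characteristic deformation lemma of \cite{KS}; invoking it completes the proof.
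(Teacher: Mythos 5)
The paper does not actually prove Lemma~\ref{lem:Microlocal}; it is stated verbatim as a known result and cited from \cite[Section 5.4]{KS}, so there is no internal proof against which to compare your argument.

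Your sketch captures the standard open-closed deformation strategy, but observe that it is not a self-contained proof: you conclude by invoking the non-characteristic deformation lemma of \cite{KS} to do the decisive globalization, which is precisely the technical content of the Microlocal Morse Lemma itself. In effect your proposal, like the paper, reduces to a citation of \cite{KS} for the hard step — consistent in spirit, but the intermediate open-closed scaffolding is then somewhat redundant. Two additional cautions if you wanted to make this self-contained. First, the implication ``$df_q \notin SS(\FF)$ forces $Mo_{q,f}(\FF)\simeq 0$'' is not a pure restatement of definitions: the Morse group in (\ref{eq:MorseGroup}) is defined for stratified Morse functions, while $f$ here is only $C^1$ and may have degenerate critical points on strata; one must invoke the fact that the Morse group depends only on the covector $df_q$ (up to shift) and argue that no stratified Morse function with that covector has a nonzero Morse group, treating separately covectors that annihilate a neighboring stratum's tangent space. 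Second, the ``Mayer--Vietoris/\v{C}ech descent'' assembly from local acyclic cones to the global statement across a level set is genuinely delicate (it is essentially the entire content of the non-characteristic deformation lemma, not merely a packaging device), so the claim that finitely many balls plus a uniform $\epsilon$ suffice needs the full argument from \cite{KS} — you cannot shortcut it by Mayer--Vietoris alone because the sublevel sets $U_{t\pm\epsilon}\cap B_{\delta_i}(q_i)$ do not form an open cover of $U_t$ in the naive sense.
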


Consider the contact embedding
  of $J^1M$ into $ST^*(M \times \R) =  T^*(M \times \R)/\R_{>0}$  given (locally in $M$) by $(x,y,z) \mapsto [(x,z; y, -1)].$  Let $\Lambda^-$ denote the image of the Legendrian $\Lambda \subset J^1M$ under this embedding.  It consists of the downward conormal directions to $\pi_{xz}(\Lambda)$ in $J^0M$.   

\begin{definition} Let $\ShLambda$ denote the full subcategory of $\ShS$ whose sheaves have singular support in the union of the Lagrangian
 cylinder 
 $\R_{>0} \Lambda^- \subset T^*(M\times \R)$ over $\Lambda^-$ and the zero-section of $T^*(M\times \R).$ 
\end{definition} 
 
In \cite[Theorem 4.1]{STZ},
the results of \cite{GKS} are applied to establish that up to
 quasi-equivalence $\ShLambda$ is a Legendrian isotopy invariant of $\Lambda$.

\subsubsection{The combinatorial sheaf category $\mathbf{Fun}^\bullet_\Lambda(\mathcal{S}, \mathbb{K})$}

In \cite[Definition 3.11]{STZ}, the authors describe a sub-category of $\mathbf{Fun}^\bullet(\mathcal{S}, \mathbb{K})$ when $\mathcal{S}$ is a stratification of $\R^2$ induced from the front of a one-dimensional Legendrian, with possibly extra 1-dimensional strata avoiding crossing points.  
We build off of this. 
Recall the $\Lambda^F$-stratification from  (\ref{eq:xzstrat}).

\begin{definition}
\label{def:fun} Given a stratification $\mathcal{S}$ as in Definition \ref{def:AFS},
let $\mathbf{Fun}_{\Lambda}^\bullet(\mathcal{S}, \mathbb{K})$ denote the full sub-category of $\mathbf{Fun}^\bullet(\mathcal{S}, \mathbb{K})$ consisting of those $F$ which satisfy the below three conditions.
\begin{enumerate}
\item  All downward generization maps are quasi-isomorphisms.
\item  All generization maps between strata of $\mathcal{S}$ belonging to the same $\Lambda^F$-stratum,  $\Lambda^F_k \subset M \times \R$,  are quasi-isomorphisms.
\item  Consider a labeled $1$-stratum $O \in \Lambda_1,$ $2$-strata $NE, NW \in \Lambda_{0} $ and $3$-stratum $N  \in \Lambda_{-1}$ as in Figure \ref{fig:Cross}.
  Then the total complex of the following diagram is acyclic.
\begin{equation} \label{eq:NW}
\xymatrix{ 
 & F(N) & \\
F(NW) \ar[ru] & & F(NE) \ar[lu] \\
& F(O) \ar[ru]  \ar[lu] &
} 
\end{equation}
\end{enumerate}
\end{definition}

\begin{figure}
\labellist
\scriptsize
\pinlabel $O$ [tl] at 146 100
\pinlabel $NW$  at 102 118
\pinlabel $NE$ [tr] at 192 126
\pinlabel $N$  at 148 152
\scriptsize
\pinlabel $x_2$ [l] at 32 16
\pinlabel $x_{1}$  [t] at 0 -2
\pinlabel $z$ [b] at 8 42
\endlabellist
\centerline{\includegraphics[scale=.8]{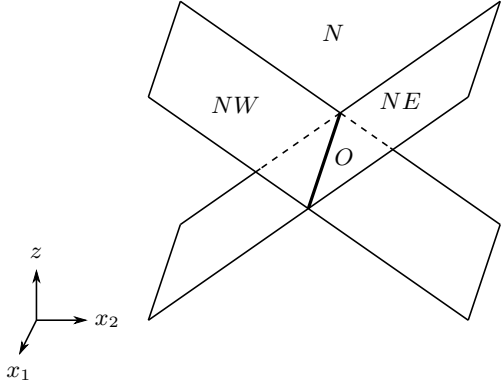}}

\caption{The four strata $O, NW, NE, N$ are contained in the closure of the upper region (with respect to the $z$-direction) at a $1$-stratum, $O$, that is part of the crossing locus of $\pi_{xz}(\Lambda)$.
}  
\label{fig:Cross}
\end{figure}

We will sometimes refer to an object $F \in \mathbf{Fun}^\bullet_\Lambda(\mathcal{S}, \mathbb{K})$ as a {\bf combinatorial sheaf}.

\subsection{Equivalence of categories}
\label{sec:CategoryEquivalence}

In this subsection, we prove the 2-dimensional analog of \cite[Theorem 3.12]{STZ}.

\begin{theorem}  
\label{thm:FunSh}
Let $\mathcal{F} \in \ShS$.  Then, $\Gamma_\mathcal{S}(\mathcal{F}) \in \mathbf{Fun}_{\Lambda}^\bullet(\mathcal{S},k)$ if and only if $\mathcal{F} \in \ShLambda$.
\end{theorem}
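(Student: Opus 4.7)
Since $\mathcal{F}$ is $\mathcal{S}$-constructible we already have $SS(\mathcal{F}) \subset \bigcup_{s_0\in\mathcal{S}}(Ts_0)^\perp$, so the task is to show that the three conditions of Definition~\ref{def:fun}, applied to $F:=\Gamma_\mathcal{S}(\mathcal{F})$, are jointly equivalent to excluding from $SS(\mathcal{F})$ every nonzero covector except the positive rays along the downward conormals to the smooth sheets of $\pi_{xz}(\Lambda)$. The principal tool is the Microlocal Morse Lemma~\ref{lem:Microlocal}: given a test covector $\xi\in (Ts_0)^\perp$ at $q$ and a stratified Morse function $f$ with $df_q=\xi$, one expresses $Mo_{q,f}(\mathcal{F})$ as the cone of the restriction map between two nested sublevel sets of $f$ near $q$, then uses the regularity of $\mathcal{S}$ (every star is a contractible disk) together with Lemma~\ref{lem:Microlocal} itself to contract each sublevel set to a union of star neighborhoods. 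The resulting cone identifies up to quasi-isomorphism with either a single generization map $F(s\to s')$ or the total complex of a small generization diagram in $F$, and the theorem follows by a covector-by-covector check.

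I would organize that check by the direction of $\xi$. Covectors with strictly positive $dz$-component fall under Condition~(1): at a point $q$ on the boundary between an upper stratum $s_1$ and a lower stratum $s_2$, taking $f=z$ yields a Morse group quasi-isomorphic to $\mathrm{Cone}(F(s_1\to s_2))$ up to shift, so the downward generization map is a quasi-isomorphism exactly when this upward covector is non-characteristic --- which is precisely what $\mathcal{F}\in\ShLambda$ forces, since $\R_{>0}\Lambda^-$ consists only of downward conormals. Covectors in $(Ts_0)^\perp$ tangent to the ambient $\Lambda^F$-stratum containing $s_0$ fall under Condition~(2); these are the ``spurious'' conormal directions introduced when refining $\Lambda^F$ by the handle decomposition $H$, and they are always outside $\R_{>0}\Lambda^-$. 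Choosing $f$ with $df_q=\xi$ identifies the Morse group with $\mathrm{Cone}(F(s_1\to s_2))$ for two adjacent strata inside a common $\Lambda^F_k$, so~(2) rules out exactly these directions.

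The substantive case is Condition~(3). Near a crossing $1$-stratum $O$ of two transverse smooth sheets, the conormals to the strata $O$, $NW$, $NE$, $N$ contain two ``legitimate'' rays (one downward conormal per sheet, both in $\R_{>0}\Lambda^-$) plus an ``extra'' downward direction pointing into the upper chamber $N$ that lies outside $\R_{>0}\Lambda^-$. A local Morse computation with $f$ separating $N$ from $NW\cup O\cup NE$ identifies the Morse group for this extra direction with a shift of the total complex of the square~(\ref{eq:NW}), so non-characteristicity of that covector translates exactly into the acyclicity asserted in~(3). The main obstacle I expect is the final exhaustive check at the codimension-$2$ front strata in $\Lambda^F_2$ --- triple points and cusp--sheet intersections --- where one must enumerate every conormal covector to every stratum of $\mathcal{S}$ lying above such a point and argue that each non-Lagrangian direction is either eliminated by~(1) or~(2) alone, or has a Morse group quasi-isomorphic to an iterated cone of squares of the form~(\ref{eq:NW}) arising from nearby smooth crossings (three such crossings at a triple point; one crossing together with cusp-tangency data at a cusp--sheet intersection). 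Properties~\ref{ob:cusp}--\ref{ob:closure} describe the sheets of $\Lambda(e_I)$ above each simplex and should supply the adjacencies in $P(\mathcal{S})$ needed to reduce each local check to one previously handled.
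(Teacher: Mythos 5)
Your overall strategy coincides with the paper's: use the Microlocal Morse Lemma and \v{C}ech covers by star neighborhoods to compute Morse groups as cones or total complexes built from $F$, then match their acyclicity against the three conditions covector by covector. Your organization of the top-dimensional cases by the direction of the covector is also correct in spirit. But the gap you flag at the $\Lambda^F_2$ strata is genuine, and the mechanism you propose to close it --- matching each residual Morse group to ``an iterated cone of squares of the form (\ref{eq:NW}) arising from nearby smooth crossings,'' with Properties \ref{ob:cusp}--\ref{ob:closure} supplying the needed adjacencies --- is not how the reduction actually goes and would not close the gap on its own. The paper first extracts two stronger consequences of conditions (1)--(3) that are absent from your proposal: that \emph{generalized} downward maps (maps $s_1 \to s_2$ fitting into $s_1 \to s_2 \to s_3$ with both $s_1\to s_3$ and $s_2\to s_3$ downward) are quasi-isomorphisms, and that the total complex of \emph{every} top-dimensional square of shape (\ref{eq:square}) with strata of dimensions $1,2,2,3$ is acyclic --- including squares built from vertical planes and cusp edges, not merely the front-crossing configuration of (\ref{eq:NW}). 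With those in hand, the \v{C}ech double complex for the Morse group at a type-$(F^3)$, $(F^2V)$, $(FV^2)$, $(FCu)$, or $(VCu)$ $0$-stratum has a top-dimensional-square \emph{subcomplex}, but its \emph{quotient} is not such a square; one must shift the quotient's strata along generalized downward maps into a lower chamber to identify it with another top-dimensional square. That filtration-and-shift step is the substantive content of the forward implication and does not appear in your outline.

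Two smaller points. First, a covector-by-covector check is not well-posed without first restricting to a generic dense set of unit covectors in $(Ts_0)^\perp$ for which stratified Morse functions actually exist: at a $0$-stratum most unit conormal directions annihilate $\overline{Ts_a}$ for some $s_a > s_0$ and must be excluded on general grounds rather than via the conditions on $F$ (this is the role of Lemma \ref{lem:StratifiedGeneric2}). Second, in your discussion of condition (3), the relevant covector at $q \in O$ points \emph{downward}, away from $N$, not into it; the Morse group is assembled from the strata in the closed quadrant opposite the gradient, which is why all four of $O$, $NW$, $NE$, $N$ appear in (\ref{eq:NW}).
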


\begin{corollary} \label{cor:Equiv}
The dg-functor $\Gamma_{\mathcal{S}}$ restricts to a quasi-equivalence from $\ShLambda$ to $\mathbf{Fun}^\bullet_\Lambda(\mathcal{S}, \mathbb{K})$.
\end{corollary}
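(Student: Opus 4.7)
The plan is to derive the corollary as a formal consequence of the unrestricted quasi-equivalence $\Gamma_\mathcal{S}: \ShS \to \mathbf{Fun}^\bullet(\mathcal{S}, \mathbb{K})$ (the preceding proposition) together with the microlocal characterization supplied by Theorem \ref{thm:FunSh}. Since $\ShLambda \subset \ShS$ and $\mathbf{Fun}^\bullet_\Lambda(\mathcal{S}, \mathbb{K}) \subset \mathbf{Fun}^\bullet(\mathcal{S}, \mathbb{K})$ are full dg-subcategories, what must be checked is that $\Gamma_\mathcal{S}$ (i) lands in the correct subcategory, (ii) remains fully faithful after restriction, and (iii) is essentially surjective onto the target subcategory. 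Nothing new in the way of sheaf-theoretic computation is needed.

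First I would invoke the ``only if'' direction of Theorem \ref{thm:FunSh}: for any $\mathcal{F} \in \ShLambda$, the functor $\Gamma_\mathcal{S}(\mathcal{F})$ satisfies the three conditions of Definition \ref{def:fun}, so restriction gives a well-defined dg-functor
\[
\Gamma_\mathcal{S}\big|_{\ShLambda}: \ShLambda \longrightarrow \mathbf{Fun}^\bullet_\Lambda(\mathcal{S}, \mathbb{K}).
\]
Fully faithfulness is then automatic: the unrestricted $\Gamma_\mathcal{S}$ induces quasi-isomorphisms on hom-complexes, and because both $\ShLambda$ and $\mathbf{Fun}^\bullet_\Lambda(\mathcal{S}, \mathbb{K})$ are defined as \emph{full} dg-subcategories, these same hom-complexes compute morphisms in the restricted setting.

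For essential surjectivity, I would take any $F \in \mathbf{Fun}^\bullet_\Lambda(\mathcal{S}, \mathbb{K})$ and appeal to essential surjectivity of the unrestricted $\Gamma_\mathcal{S}$ to produce $\mathcal{F} \in \ShS$ together with a quasi-isomorphism $\Gamma_\mathcal{S}(\mathcal{F}) \simeq F$. Since $\mathbf{Fun}^\bullet_\Lambda(\mathcal{S}, \mathbb{K})$ is closed under quasi-isomorphism (the conditions of Definition \ref{def:fun} are quasi-isomorphism invariant: they assert that specified maps are quasi-isomorphisms, resp.\ that a specified total complex is acyclic), we have $\Gamma_\mathcal{S}(\mathcal{F}) \in \mathbf{Fun}^\bullet_\Lambda(\mathcal{S}, \mathbb{K})$, and then the ``if'' direction of Theorem \ref{thm:FunSh} yields $\mathcal{F} \in \ShLambda$. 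Thus $F$ lies in the essential image of the restricted functor.

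The only substantive point in this derivation, and hence the only place any care is needed, is the quasi-isomorphism invariance of the conditions defining $\mathbf{Fun}^\bullet_\Lambda(\mathcal{S}, \mathbb{K})$ used in the essential surjectivity step; everything else is formal bookkeeping from the fullness of the two subcategories together with Theorem \ref{thm:FunSh}. I do not anticipate any serious obstacle, as each of the three conditions of Definition \ref{def:fun} is manifestly preserved under a componentwise quasi-isomorphism of functors $P(\mathcal{S}) \to \mathit{Ch}(\mathbb{K}\text{-mod})$.
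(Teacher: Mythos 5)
Your proposal is correct and follows essentially the same route as the paper: restriction is well defined by the direction of Theorem \ref{thm:FunSh} going from $\mathcal{F}\in\ShLambda$ to the conditions of Definition \ref{def:fun}, full faithfulness is inherited because both subcategories are full, and essential surjectivity follows from the unrestricted quasi-equivalence together with the other direction of the theorem, with your observation that Definition \ref{def:fun} is invariant under quasi-isomorphism being exactly the (implicit) point the paper relies on. The only blemish is that you have swapped the labels ``if'' and ``only if'' for the two implications of Theorem \ref{thm:FunSh}, though the implications you actually invoke at each step are the correct ones.
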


\begin{proof}
The reverse implication of the theorem implies that $\Gamma_\mathcal{S}$ restricts to a dg-functor from $\ShLambda$ to $\mathbf{Fun}^\bullet_\Lambda(\mathcal{S}, \mathbb{K})$ (that remains fully faithful on cohomology), and the forward implication shows that this restriction is essentially surjective. 
\end{proof}

Theorem \ref{thm:FunSh} is established in the remainder of this section.

\subsubsection{Possible strata for $2$-dimensional Legendrians}
\label{sec:list}
In preparation for the proof of Theorem \ref{thm:FunSh} we now give a detailed list of possible strata of $\mathcal{S}$.
 Recall that in the $\Lambda^F$-stratification the codimension $1$ singularities belonging to $\Lambda^F_{1}$ consist of crossing arcs and cusp edges; the codimension $2$ singularities in $\Lambda^F_2$ are either triple points or cusp-sheet intersections.   
See Figure \ref{fig:FrontSing}.     
Note from the construction of $H$ that strata
 above $0$-cells of $H$ only belong to $\Lambda^F_{-1}$ and $\Lambda^F_{0}$.  Above $1$-cells of $H$ there is at most one point in $\Lambda^F_{1}$.
 Codimension $2$ singularities only occur above $2$-cells of the form $h(\bfe)$ with $\bfe= (e_{i_0},e_{i_0})$, i.e. above the interiors of $0$-handles when $H$ is viewed as a handle decomposition.

\begin{remark}  \label{rem:orthog}
All of the $0$-cells of $H= \{h(\bfe)\}$ are $4$-valent, eg. a vertex $h(\bfe)$ with $\bfe = (e_{i_0}, e_{i_0i_1i_2})$ sitting near the $e_{i_0}$ corner of the $2$-simplex $e_{i_0i_1i_2}$ is an end point for the $4$ edges $h(e_{i_0}, e_{i_0i_1})$, $h(e_{i_0},e_{i_0i_2})$, $h(e_{i_0i_1}, e_{i_0i_1i_2})$, and $h(e_{i_0i_2}, e_{i_0i_1i_2})$.  To ease considerations, 
 we assume that at vertices, adjacent edges meet one another orthogonally (in local coordinates) and opposite edges piece together to form smooth paths through the vertex.  
\end{remark}
 
Organized by dimension, the strata of $\mathcal{S}$ are of the following types.

\medskip

\noindent{\bf $3$-strata:}  The 
 3-strata of $\mathcal{S}$ lie in the complement of the front $\pi_{xz}(\Lambda)$ above $2$-cells of $H$.

\medskip

\noindent{\bf $2$-strata:}  When the 2-strata are contained in $\Lambda^F_0$ we call them {\bf Legendrian 2-strata}.  
  There are also 2-dimensional {\bf vertical strata} contained in $\Lambda^F_{-1}$ that project to $1$-cells of $H$.  (We call them ``vertical'' since they are parallel with the $z$-axis.)  In the following, we sometimes refer to subsets of $M\times \R$ made up of  closures of Legendrian 2-strata (resp. vertical 2-strata) that meet one another smoothly as {\bf Legendrian front planes} (resp. {\bf vertical planes}).

\medskip

\noindent {\bf $1$-strata:}
We itemize the four types of 1-strata 
 for easier future reference.
\begin{itemize}
\item[($F^2$)] A crossing arc in $\Lambda^F_1$ is the intersection of the closure of four Legendrian 2-strata, which 
 in pairs form two transversely intersecting Legendrian front planes.
\item[($V^2$)] A vertical 1-stratum in $\Lambda^F_{-1}$ is the intersection of the closure of four vertical 2-strata above a vertex of $H$.
  This can also be viewed as two transversely intersecting vertical planes.  (As in Remark \ref{rem:orthog}, closures of vertical strata above opposite edges at a vertex of $H$  
 piece together into smooth planes.)
 \item[($FV$)] A non-vertical 1-stratum in $\Lambda^F_0$ results from the intersection of a Legendrian front plane and a vertical plane.
\item[($Cu$)] 
A cusp arc in $\Lambda^F_1$.  These occur at a cuspidal intersection of two Legendrian front planes.
\end{itemize}

\medskip

{\bf $0$-strata:} There are five types of 0-strata. 
\begin{itemize}
\item[($F^3$)] This is a triple point intersection in $\Lambda^F_2$ of 3 Legendrian front planes.
\item[($F^2V$)] This is a point in $\Lambda^F_1$ at the intersection of two Legendrian front planes and a vertical plane.
\item[($FV^2$)] This is a  point in $\Lambda^F_0$ at the intersection of two vertical planes and a Legendrian front plane.
\item[($VCu$)] This is a  point in $\Lambda^F_1$ where a vertical plane intersects the cusp edge of two cusping Legendrian front planes. \item[($FCu$)] This is a point in $\Lambda^F_2$ where a Legendrian front plane intersects the cusp edge of two cusping Legendrian front planes. 
\end{itemize}

\subsubsection{Computations with constructible sheaves and proof of the converse in Theorem \ref{thm:FunSh}} 

We will routinely make use of the following point which uses 
 the Microlocal Morse Lemma and that stars of strata, $s_0^*$, are balls:
\begin{observation}  \label{observe:1} When $\mathcal{F} \in \ShS$, for any point $q$ belonging to a stratum $s_0 \in \mathcal{S}$ and for all sufficiently small balls $B_\delta(q)$ centered at $q,$ the restriction is a quasi-isomorphism, 
\[
\Gamma(s_0^*; \mathcal{F}) \stackrel{\cong}{\rightarrow} \Gamma(B_{\delta}(q); \mathcal{F}).
\]
\end{observation}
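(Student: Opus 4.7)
The plan is to invoke the Microlocal Morse Lemma (Lemma~\ref{lem:Microlocal}). Since $s_0^*$ is open and contains $q$, for $\delta$ sufficiently small one has $B_\delta(q) \subset s_0^*$. It then suffices to construct a function $f \in C^1(M\times\R)$, proper on the support of $\mathcal{F}$, together with values $a < b$ such that $f^{-1}(-\infty, a) = B_\delta(q)$, $f^{-1}(-\infty, b) = s_0^*$, and $df_p \notin SS(\mathcal{F})$ for every $p \in f^{-1}([a,b))$. With such an $f$, Lemma~\ref{lem:Microlocal} immediately yields that the restriction $\Gamma(s_0^*;\mathcal{F}) \to \Gamma(B_\delta(q);\mathcal{F})$ is a quasi-isomorphism.

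Because $\mathcal{F}$ is $\mathcal{S}$-constructible, we have the standard inclusion $SS(\mathcal{F}) \subset \bigcup_\alpha (Ts_\alpha)^\perp$ recalled in Section~\ref{sec:CategoryDefinition}, so the non-characteristic condition $df_p \notin SS(\mathcal{F})$ reduces to $df_p|_{T_p s_\alpha} \neq 0$ for the unique stratum $s_\alpha$ containing $p$. To construct $f$ with this property, we exploit that $\mathcal{S}$ is locally conical at $q$: a neighborhood of $q$ in $s_0^*$ is stratified-homeomorphic to an open cone with apex $q$ over the link of $q$ in the topological frontier of $s_0^*$, with each stratum incident to $q$ identified with the cone (from $q$) over a stratum of the link. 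The enumeration of stratum types in Section~\ref{sec:list} makes this picture explicit: near any point $q$, the local configuration is a combination of transverse Legendrian and vertical planes together with cusp edges and at most one triple-point or cusp-sheet singularity, and each such configuration is visibly conical from $q$. Define $f$ on a small neighborhood of $q$ as the radial coordinate in the cone model, then extend outward along the ray structure to obtain a smooth function on $s_0^*$ whose level sets are nested ``spheres'' sweeping from $q$ out to the frontier of $s_0^*$, and finally extend to $M \times \R$ by a constant value $\geq b$ outside $s_0^*$, smoothing near the frontier if required to obtain $C^1$ regularity.

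Because $f$ is strictly increasing along every ray from $q$ inside each stratum $s_\alpha \subset s_0^*$, one has $df_p|_{T_p s_\alpha} \neq 0$ for every $p \neq q$ in $s_0^*$; in particular this holds on the annular region $s_0^* \setminus B_\delta(q) = f^{-1}([a,b))$, giving the required non-characteristic condition. The main obstacle is the construction of the stratified cone model and the ensuing smooth radial function $f$: this can be handled either by direct case analysis over the finite list of local stratum types in Section~\ref{sec:list}, or more uniformly by invoking Thom--Mather theory for Whitney-stratified spaces, which provides a local conical structure near any point of a Whitney stratification. Once the cone model and $f$ are in place, the remainder of the argument is a routine application of Lemma~\ref{lem:Microlocal}.
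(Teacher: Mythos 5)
Your approach — invoking the Microlocal Morse Lemma with a proper function that sweeps $s_0^*$ inward to $B_\delta(q)$ while staying non-characteristic — is exactly what the paper intends; the paper's one-line justification ("uses the Microlocal Morse Lemma and that stars of strata are balls") is a placeholder for precisely this argument. So the strategy matches.

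However, there is a gap in the step where you deduce $df_p|_{T_p s_\alpha}\neq 0$ from $f$ being "strictly increasing along every ray from $q$ inside each stratum $s_\alpha$." Near a cusp $1$-stratum the strata are \emph{not} unions of rays from $q$: the cusp edge sits on a surface whose local model $(t_1,t_2)\mapsto(t_1^3,t_1^2,t_2)$ has tangent cone at $q$ strictly smaller than the nearby sheet, so a ray from $q$ through a nearby point of the Legendrian $2$-stratum generally leaves the stratum. The topological cone model (as in Thom--Mather theory) gives a stratified \emph{homeomorphism} to a cone, not a smooth one, so the radial coordinate of that cone need not be $C^1$ nor related to the actual tangent spaces $T_p s_\alpha$. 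The correct way to secure the non-characteristic condition is either (i) take $f$ to be the genuine (squared) distance to $q$ in local coordinates and verify by hand in each of the finitely many local models from Section~\ref{sec:list} (including the cusp model, where one computes directly that $d(\mathrm{dist}^2)|_{s_\alpha}$ vanishes only at $q$), or (ii) cite the radial nondegeneracy property of Whitney stratifications. You gesture at both alternatives at the end, but the main argument as written is not valid at cusp strata. A second, smaller issue: the extension from a small ball out to all of $s_0^*$ is not purely local at $q$; it relies on the hypothesis added in Section~\ref{sec:handle} that closed stars of simplices are disks, which makes the full star topologically a cone and permits the large-scale sweep. Your proof should point to that hypothesis explicitly rather than treating the extension as automatic.
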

Of course, the Microlocal Morse Lemma allows for $B_{\delta}(q)$ to be expanded from being a small ball to a much larger topological ball provided that this can be done so that the direction of expansion is never in $SS(F)$.  In evaluating $\Gamma(U; \mathcal{F})$ for particular $U$, it is often useful to combine the above observation with the following standard result from sheaf theory.  For the statement, recall  the  
convention that $\Gamma(U; \mathcal{F})$ denotes right derived sections on $U$.

\begin{lemma}[\v{C}ech Lemma]    Let $\mathcal{F}$ be a complex of sheaves on $M\times \R$.  
Let $\mathcal{U} = \{U_i\}_{i=1}^n$ be a collection of open sets in $M\times \R$ with $U = \cup_{i=1}^n U_i$, and consider the \v{C}ech complex $C^*(\mathcal{U}; \mathcal{F})$ with 
\[
C^p(\mathcal{U};\mathcal{F}) = \oplus_{1 \leq i_0 < \ldots < i_p \leq n} \Gamma(U_{i_0} \cap\ldots \cap U_{i_p}; \mathcal{F})
\]
 as a double complex of $\mathbb{K}$-modules where the vertical differentials come from 
 $\mathcal{F}$ (more precisely, from a complex of injective sheaves with $\mathcal{F} \stackrel{\cong}{\rightarrow}\mathcal{I}$) and the horizontal differentials are the standard differentials from the \v{C}ech complex.  Then, the sum of restriction maps $\Gamma(U; \mathcal{F}) \rightarrow C^0(\mathcal{U};\mathcal{F}) = \oplus_{i=1}^n \Gamma(U_i; \mathcal{F})$ composed with the inclusion $C^0(\mathcal{U};\mathcal{F}) \hookrightarrow Tot^*(C^*(\mathcal{U}; \mathcal{F}))$ gives a quasi-isomorphism
\[
\Gamma(U;\mathcal{F}) \stackrel{\cong}{\rightarrow} Tot^*(C^*(\mathcal{U}; \mathcal{F})).
\]
\end{lemma}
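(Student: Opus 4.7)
The plan is to reduce the statement to the classical fact that injective sheaves have vanishing higher \v{C}ech cohomology on any open cover. First I would fix a quasi-isomorphism $\mathcal{F} \stackrel{\cong}{\rightarrow} \mathcal{I}^\bullet$ to a bounded-below complex of injective sheaves, so that for every open $V \subset M \times \R$ the right derived sections $\Gamma(V; \mathcal{F})$ are computed by the ordinary sections $\mathcal{I}^\bullet(V)$; in particular this applies to $V=U$ and to every intersection $U_{i_0}\cap\cdots\cap U_{i_p}$. The statement therefore reduces to showing that the evident comparison map
\[
\mathcal{I}^\bullet(U) \longrightarrow Tot^*\bigl( C^*(\mathcal{U}; \mathcal{I}^\bullet) \bigr)
\]
is a quasi-isomorphism of ordinary complexes of $\mathbb{K}$-modules.

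Next I would assemble the right-hand side as a double complex whose $p$-th column is $C^p(\mathcal{U}; \mathcal{I}^\bullet)$ and whose $q$-th row, augmented on the left by $\mathcal{I}^q(U)$, is the \v{C}ech sequence
\[
\mathcal{I}^q(U) \longrightarrow \bigoplus_i \mathcal{I}^q(U_i) \longrightarrow \bigoplus_{i<j} \mathcal{I}^q(U_i \cap U_j) \longrightarrow \cdots
\]
for the individual injective sheaf $\mathcal{I}^q$. The essential input I plan to invoke is that for every injective sheaf $\mathcal{J}$ on $M\times \R$ and every finite open cover $\mathcal{U}$ of $U$, this augmented \v{C}ech sequence is exact. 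I would prove this either by appealing to the \v{C}ech-to-derived-functor spectral sequence (which collapses above the zeroth row because injective sheaves have vanishing higher sheaf cohomology on each intersection $U_{i_0\cdots i_p}$), or more directly by observing that the sheafified \v{C}ech complex of $\mathcal{J}|_U$ associated to $\mathcal{U}$ is a resolution built from pushforwards of injectives from the intersections, hence remains acyclic after taking global sections.

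Finally, given this row-exactness, a standard filtration argument concludes: the mapping cone of the comparison map above is a double complex in which every row is acyclic, so its total complex is acyclic by the usual ``row-exact implies total acyclic'' principle. The main obstacle, such as it is, lies in ensuring convergence of the relevant spectral sequence. This is automatic in the present setting, because $\mathcal{F}$ has bounded cohomology (so one may take $\mathcal{I}^\bullet$ bounded below) and the cover $\mathcal{U}$ is finite of size $n$ (so the \v{C}ech direction is bounded), which makes the filtration by columns regular and the associated spectral sequence convergent, yielding the desired quasi-isomorphism.
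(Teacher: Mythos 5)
Your argument is correct, and it is the standard proof of this standard fact. Note that the paper does not prove the \v{C}ech Lemma at all --- it is cited as a well-known result from sheaf theory --- so there is no in-paper argument to compare against. Your reduction to a bounded-below complex of injectives, the exactness of the augmented \v{C}ech complex of an injective (hence flasque) sheaf (most cleanly seen, as in your second variant, from the sheafified \v{C}ech resolution of $\mathcal{J}|_U$ by pushforwards of restrictions of injectives, which is a $\Gamma$-acyclic resolution of an injective sheaf), and the concluding filtration argument all go through; convergence is indeed automatic because the cover is finite and the injective complex is bounded below, so the column filtration is bounded. One small wording caveat: it is the \emph{augmented} \v{C}ech complex $\mathcal{J}(U) \to C^0 \to C^1 \to \cdots$ that is exact, while the unaugmented \v{C}ech complex has cohomology $\mathcal{J}(U)$ in degree $0$, so ``remains acyclic after taking global sections'' should be read as applying to the augmented version (equivalently, to the cone, as you use in the next step).
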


\begin{proof}[Proof of $(\Leftarrow)$ in Theorem \ref{thm:FunSh}] 
Assuming $SS(\mathcal{F})\setminus($0$-\mbox{section}) \subset \Lambda^-$, whenever $s_0 \rightarrow s_1$ is a generization map as in Definition \ref{def:fun} (1) or (2), a small ball $B_{\delta_1}(q_1)$  centered at a point $q_1 \in s_1$ near $s_0$ can be expanded to a larger (but still small) ball $B_{\delta_0}(q_0)$ with $q_0 \in s_0$ in such a way that the Microlocal Morse Lemma implies  $\Gamma(B_{\delta_0}(q_0); \mathcal{F}) \stackrel{\cong}{\rightarrow} \Gamma(B_{\delta_1}(q_1); \mathcal{F})$.  [Since $s_0 \rightarrow s_1$ is downward or $s_0$ and $s_1$ belong to the same $\Lambda^F$-stratum, the expansion from $B_{\delta_1}(q_1)$ to $B_{\delta_0}(q_0)$ never pushes the boundary of the ball downward through $\pi_{xz}(\Lambda)$.]  Combined with Observation \ref{observe:1} we get a commutative diagram of restriction maps
\[
\xymatrix{ F(s_0)=& \Gamma(s_0^*; \mathcal{F})   \ar[r]  \ar[d]^\cong  & \Gamma(s_1^*;\mathcal{F}) \ar[d]^\cong &= F(s_1) \\
 & \Gamma(B_{\delta_0}(q_0); \mathcal{F}) \ar[r]^\cong & \Gamma(B_{\delta_1}(q_1);\mathcal{F}) & 
}  
\]  
where $F = \Gamma_\mathcal{S}(\mathcal{F})$.  
Thus, we conclude that the top arrow, $F(s_0 \rightarrow s_1)$, is a quasi-isomorphism as required in Definition \ref{def:fun} (1) or (2).

To verify Definition \ref{def:fun} (3), consider strata labeled $O, NW, NE, N$ as in the definition. 
 Let $f$ be stratified Morse with a critical point at $q\in O$ that is a local minimum of $f|_O$ such that $(\nabla f)_q$ points perpendicularly downward from $O$ into the bottom quadrant and lies strictly between the downward orthogonal vectors to $NW$ and $NE$ at $q$.  (The gradient can be taken using the Euclidean metric in local coordinates.)
 Using the \v{C}ech Lemma, and the Observation \ref{observe:1} the Morse group is quasi-isomorphic to the total complex from (\ref{eq:NW}).  (See the proof of Lemma \ref{lem:A1} below for more detail on this Morse group computation.)  Since $Mo_{q,f}(\mathcal{F})$ is acyclic as $df_q \notin SS(\mathcal{F})$, we see that Definition \ref{def:fun} (3) holds.

\end{proof}

\subsubsection{Preliminary Lemmas}
\label{sec:preliminary}

The following two lemmas record some useful consequences of Definition \ref{def:fun}.

 Suppose that $s_1,s_2$ and $s_3$ are strata with $s_1 \leq s_2 \leq s_3$ and so that $s_1 \rightarrow s_3$ and $s_{2} \rightarrow s_3$ are downward maps (as in Def. \ref{def:downward}).   In this case, we say that $s_1 \rightarrow s_2$ is a {\bf generalized downward} generization map.  For example, the map from a (Cu) $1$-strata to the Legendrian $2$-strata that is the lower sheet at the cusp edge is generalized downward but not downward.

\begin{lemma}
\label{lem:Downward}
When $F \in \mathbf{Fun}^\bullet_\Lambda(\mathcal{S},\mathbb{K})$ all generalized downward maps are sent to quasi-isomorphisms by $F$.
 \end{lemma}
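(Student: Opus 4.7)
The plan is to deduce this immediately from Definition \ref{def:fun}(1) and the two-out-of-three property for quasi-isomorphisms, using the defining factorization of a generalized downward map.

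By hypothesis, the generalized downward generization $s_1 \to s_2$ sits inside a chain $s_1 \leq s_2 \leq s_3$ in $P(\mathcal{S})$ for which both $s_1 \to s_3$ and $s_2 \to s_3$ are honestly downward. Since $P(\mathcal{S})$ is a poset there is a unique morphism between any two comparable strata, so functoriality of $F$ yields the factorization
\[
F(s_1 \to s_3) \;=\; F(s_2 \to s_3) \circ F(s_1 \to s_2)
\]
in $\mathit{Ch}(\mbox{$\mathbb{K}$-mod})$. Definition \ref{def:fun}(1) applied to the two downward maps $s_1 \to s_3$ and $s_2 \to s_3$ says that both $F(s_1 \to s_3)$ and $F(s_2 \to s_3)$ are quasi-isomorphisms. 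The two-out-of-three property for quasi-isomorphisms in $\mathit{Ch}(\mbox{$\mathbb{K}$-mod})$ then forces $F(s_1 \to s_2)$ to be a quasi-isomorphism as well, which is the claim.

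There is essentially no obstacle here beyond parsing the definition: the only things to check are that the chain $s_1 \leq s_2 \leq s_3$ really does produce a composable pair in the poset category (immediate since partial orders give unique arrows) and that the example mentioned after the definition — the map from a cusp $1$-stratum to its lower adjacent Legendrian $2$-stratum — does fit the pattern, with $s_3$ taken to be the open $3$-stratum immediately below the cusp edge. Both of these are straightforward, so the lemma reduces to the one-line 2-out-of-3 argument above.
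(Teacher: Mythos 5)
Your proof is correct and is essentially identical to the paper's: both use the factorization $F(s_1 \to s_3) = F(s_2 \to s_3) \circ F(s_1 \to s_2)$, apply Definition \ref{def:fun}(1) to the two downward maps, and conclude by the two-out-of-three property for quasi-isomorphisms.
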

\begin{proof} 
$F(s_2 \rightarrow s_3) \circ F(s_1 \rightarrow s_2) = F(s_1 \rightarrow s_3)$ and since two of the three maps are quasi-isomorphisms by Definition \ref{def:fun} (1), the third is as well.
\end{proof}

Next, consider a collection of four distinct strata $s_1, s_2^a, s_2^b, s_3 \in \mathcal{S}$ having generization maps:
\begin{equation} \label{eq:square}
\xymatrix{   & s_2^a \ar[rd] &  \\ s_1 \ar[ru] \ar[rd] & & s_3 \\  & s_2^b \ar[ru] & } \quad
\end{equation} 
We say that the diagram (\ref{eq:square}) is a {\bf top dimensional square} if
\begin{enumerate}
\item  $\dim s_1 = 1$, \quad $\dim s_2^a = \dim s_2^b =2$, \quad $\dim s_3 =3$;  and
\item  in the case that the $1$-stratum $s_1$ is of type (Cu) the $3$-stratum $s_3$ is the region between (in the $z$-direction) the two Legendrian two strata $s_2^a$ and $s_2^b$.  
\end{enumerate}

\begin{lemma}  \label{lem:topdim}
Let $F \in \mathbf{Fun}^\bullet_\Lambda(\mathcal{S},\mathbb{K})$.  Then, for every top dimensional square, the total complex of 
\[
\xymatrix{   & F(s_2^a) \ar[rd] &  \\ F(s_1) \ar[ru] \ar[rd] & & F(s_3) \\  & F(s_2^b) \ar[ru] & }
\]
is acyclic.
\end{lemma}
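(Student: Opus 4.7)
I argue by case analysis on the type of the $1$-stratum $s_1$ from Section \ref{sec:list}. In each case I view the square as a $2 \times 2$ bicomplex $C^{\bullet,\bullet}$ with $C^{0,0} = F(s_1)$, $C^{1,0} = F(s_2^a)$, $C^{0,1} = F(s_2^b)$, $C^{1,1} = F(s_3)$, whose total complex is quasi-isomorphic to the mapping cone of the morphism between its two columns (or its two rows). Since the mapping cone of a morphism of acyclic complexes is acyclic by the long exact sequence in cohomology, it suffices in each case to exhibit a row or column decomposition in which both pieces are acyclic, i.e.\ the two corresponding generization maps are quasi-isomorphisms.

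For an $(F^2)$ crossing arc, there are four top-dimensional squares, one per $3$-stratum $N, S, E, W$ around $O = s_1$, and each is bounded by two Legendrian $2$-strata. The case $s_3 = N$ is exactly the content of Definition \ref{def:fun}(3). For $s_3 = S$, both $s_2^a \to S$ and $s_2^b \to S$ are downward (Def.\ \ref{def:fun}(1)), and both $s_1 \to s_2^a, s_1 \to s_2^b$ are generalized downward through $S$ (Lemma \ref{lem:Downward}), so all four maps are quasi-isomorphisms. For $s_3 \in \{E, W\}$, the ``upper'' side sheet maps downward to $s_3$, while $s_1$ maps to the ``lower'' side sheet by a map generalized downward through $S$; arranging these as the vertical differentials of the bicomplex makes both columns acyclic.

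For a $(V^2)$ vertical $1$-stratum, every stratum in the square lies in $\Lambda^F_{-1}$, so all four generization maps are quasi-isomorphisms by Definition \ref{def:fun}(2). For an $(FV)$ stratum, one of $s_2^a, s_2^b$ is Legendrian (in $\Lambda^F_0$) while the other is vertical (in $\Lambda^F_{-1}$); the row decomposition then pairs $s_1$ with the Legendrian $2$-stratum (both in $\Lambda^F_0$) and the vertical $2$-stratum with $s_3$ (both in $\Lambda^F_{-1}$), making each row acyclic by Definition \ref{def:fun}(2). For a $(Cu)$ cusp arc, by the definition of top-dimensional square $s_3$ is the cuspidal region bounded above and below by $s_2^a$ and $s_2^b$; the map $s_2^a \to s_3$ is downward, and letting $s_3'$ denote the opposite ``outside'' $3$-stratum adjacent to $s_1$, both $s_1 \to s_3'$ and $s_2^b \to s_3'$ are downward, so $s_1 \to s_2^b$ is generalized downward through $s_3'$, and both columns are acyclic.

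The main obstacle is the $(F^2)$ case, since Definition \ref{def:fun}(3) directly handles only the upper region $N$; for the other three regions one must thread the generization maps from $s_1$ through $S$ as a common downward target so that Lemma \ref{lem:Downward} upgrades the non-downward maps $s_1 \to s_2^{a,b}$ to quasi-isomorphisms. A parallel geometric observation, using the complementary ``outside'' region as the common target, is the key input in the $(Cu)$ case; the remaining two cases are purely axiomatic consequences of Definition \ref{def:fun}(2).
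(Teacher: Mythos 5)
Your proof is correct and follows essentially the same approach as the paper: after disposing of the $(O,NW,NE,N)$ square directly via Definition \ref{def:fun}(3), you exhibit in each remaining case a pair of parallel arrows that are both quasi-isomorphisms (via Lemma \ref{lem:Downward} for the $(F^2)$ and $(Cu)$ cases, via Definition \ref{def:fun}(2) for the $(V^2)$ and $(FV)$ cases), which makes one row/column decomposition of the total complex acyclic. The paper compresses the verification into a short bracketed remark, while you enumerate the $(F^2)$ subcases $N,S,E,W$ and trace the generalized-downward detours explicitly; the substance and the key ingredients are identical.
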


\begin{proof}
Definition \ref{def:fun} (3) makes this acyclicity requirement in the case that $s_1, s_2^a, s_2^b, s_3$ are strata of the form $O, NW, NE, N$ as in Figure \ref{fig:Cross}.  In all other cases, there is a pair of parallel arrows (i.e., either  arrows $s_1 \rightarrow s_2^a$ and $s_2^b \rightarrow s_3$ or arrows $s_1 \rightarrow s_2^b$ and $s_2^a \rightarrow s_3$) that are both quasi-isomorphisms.  [To verify, note that in the case that $s_1$ is of type $(F^2)$ or $(Cu)$ there is always a pair of parallel maps that are both (generalized) downward. In the case that $s_1$ is of type $(V^2)$ or $(FV)$ there is a pair of parallel maps that both point away from a common vertical plane, and they are quasi-isomorphisms by Definition \ref{def:fun} (2).]
\end{proof}

\subsubsection{Proving the forward direction of Theorem \ref{thm:FunSh}}

In establishing the forward implication of Theorem \ref{thm:FunSh}, it is convenient to be able to restrict attention to covectors in ``generic directions''  when checking the vanishing of Morse groups.    
\begin{lemma}
\label{lem:StratifiedGeneric2}  Let $\mathcal{F} \in \ShS$, and let $g$ be a Riemannian metric defined near $q \in J^0M$.
Assume $q \in s_0$, and let $U$ be the set of unit vectors in $T_q(J^0M)$ that are orthogonal to $T_q s_0$ and not orthogonal to $T_q(J^0M) \cap \overline{Ts_a}$ for any stratum $s_a > s_0$, i.e.
\begin{equation}
U = ST_q(J^0M) \cap \left((T_q s_0)^\perp \setminus \left( \bigcup_{s_0 < s_a} \left( T_q(J^0M) \cap \overline{Ts_a}\right)^\perp \right) \right).
\end{equation}   
Then, the following are equivalent. 
\begin{enumerate}
\item The set of non-zero characteristic vectors for $\mathcal{F}$ in $T^*_q(J^0M)$ are contained in $\mathbb{R}_{>0}\Lambda^-.$
\item For each $\xi \in U$, either $s_0$ is a Legendrian $2$-strata and $\xi$ is the downward orthogonal vector to $s_0$ (with negative
 $z$-component), or there exists a stratified Morse $f$ defined near $q$ 
 with $(\nabla f)_q = \xi$  such that 
 $Mo_{q,f}(\mathcal{F})$ is acyclic.
\end{enumerate}
\end{lemma}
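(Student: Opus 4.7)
The plan is to establish a dictionary between unit vectors $\xi \in U$ and those cotangent vectors $\xi^* \in T^*_q(J^0M)$ that arise as $df_q$ for a stratified Morse function with critical point at $q$, and then to match $\mathbb{R}_{>0}\Lambda^-$ with the exceptional case in (2) by a case analysis on the types of strata enumerated in Section \ref{sec:list}.

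First I would record two preliminary facts. Write $\xi^*$ and $\xi$ for a metric-dual pair of covector and tangent vector at $q$, so $\xi^*(v) = g(\xi, v)$. (i) A nonzero $\xi^*$ equals $df_q$ for some stratified Morse function with critical point at $q$ if and only if $\xi^* \in (T_qs_0)^\perp$ and $\xi^*$ does not annihilate $T_q(J^0M)\cap\overline{Ts_\alpha}$ for any $s_\alpha > s_0$; equivalently, $\xi/|\xi|$ lies in $U$. The ``if'' direction is realized by taking $f$ affine linear in the $\xi$ direction and adding a generic quadratic along $s_0$ so that $f|_{s_0}$ has a non-degenerate critical point at $q$. (ii) By \cite[Section 3.1.2]{STZ}, the Morse group $Mo_{q,f}(\mathcal{F})$ depends only on the ray $\mathbb{R}_{>0} df_q$, up to grading shift. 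Combining (i) and (ii), the nonzero characteristic vectors at $q$ are exactly those $\xi^*$ whose normalized metric-dual lies in $U$ and for which the associated Morse group is non-acyclic.

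For $(1)\Rightarrow(2)$, I would fix $\xi \in U$ and set $\xi^* = g(\xi,\cdot)$. Choose stratified Morse $f$ with $(\nabla f)_q = \xi$ via (i). If $Mo_{q,f}(\mathcal{F})$ is non-acyclic then $\xi^*$ is characteristic, so $\xi^* \in \mathbb{R}_{>0}\Lambda^-$ by (1), and it remains to deduce the exceptional case. Using the list in Section \ref{sec:list}: if $s_0 \subset \Lambda^F_{-1}$ then the fiber of $\Lambda^-$ over $q$ is empty; and if $s_0$ is a $0$-stratum of any of the five types or a $1$-stratum of type $(F^2)$, $(Cu)$, or $(FV)$, then every ray of $\Lambda^-$ over $q$ is either a downward conormal of some Legendrian $2$-stratum $s_\alpha > s_0$ or the limiting horizontal conormal of two cusping sheets at a cusp edge, and in each case it annihilates $T_q(J^0M) \cap \overline{Ts_\alpha}$ for some $s_\alpha > s_0$; hence its metric-dual cannot lie in $U$, a contradiction. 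The only remaining possibility is $s_0$ itself a Legendrian $2$-stratum with $\xi^*$ its downward conormal, giving the exceptional case.

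For $(2)\Rightarrow(1)$, let $\xi^*$ be a nonzero characteristic vector at $q$, realized as $df_q$ for some stratified Morse $f$. By (i), the normalized metric-dual $\xi$ lies in $U$, and by (ii), the Morse group is non-acyclic for every stratified Morse $f'$ with $(\nabla f')_q = \xi$. The hypothesis (2) then forces the exceptional case and yields $\xi^* \in \mathbb{R}_{>0}\Lambda^-$. The main obstacle is the case analysis in $(1)\Rightarrow(2)$; the unifying geometric observation is that every $\Lambda^-$ ray at a singular front point annihilates the tangent plane of some adjacent Legendrian $2$-stratum, which is precisely the failure condition for membership in $U$.
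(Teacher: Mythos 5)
Your proof is correct and follows essentially the same approach as the paper's. The paper states $(1)\Rightarrow(2)$ as an immediate consequence of the observation that $U \cap \mathbb{R}_{>0}\Lambda^-$ is nonempty only when $s_0$ is a Legendrian $2$-stratum, while you verify this stratum-by-stratum via the list in Section \ref{sec:list}; both arguments rest on the same geometric fact that every $\Lambda^-$ ray over a codimension $\geq 1$ front point annihilates $T_q(J^0M)\cap\overline{Ts_\alpha}$ for an adjacent Legendrian $2$-stratum $s_\alpha>s_0$, and the $(2)\Rightarrow(1)$ direction uses the same two ingredients (characterization of realizable covectors as those dual to $U$, and ray-invariance of Morse groups) as the paper.
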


\begin{proof}
That (1) implies (2) is clear from the definitions since (using the metric $g$ to identify $T(J^0M)$ and $T^*(J^0M)$) the only case in which $U \cap \R_{>0}\Lambda^- \neq \emptyset$ is when $U$ contains the downward orthogonal vector to a Legendrian $2$-strata.  

To prove the converse, note that unit vectors not belonging to $U$ are never characteristic vectors.  This is because the singular support of $\mathcal{F}$ at $s$ must be contained in $(T s_0)^{\perp}$ since $\mathcal{F}$ is constructible with respect to $\mathcal{S}.$  Moreover, a vector
 $\xi \in (T_qs_0)^\perp \cap (T_qs_0 \cap\overline{Ts_a})^\perp$ with $s_0 < s_a$ cannot be a characteristic vector simply because there are no stratified Morse functions with $\nabla_qf = \xi$.   
Thus, if (2) holds then a unit characteristic vector $\xi$, since it must belong to $U$,  
 can only be a downward orthogonal vector to a Legendrian $2$-strata, and in this case the characteristic vector lies in $\mathbb{R}_{>0}\Lambda^-$.   Finally, note that the set of all non-zero characteristic vectors is $\R_{>0}$ times the set of unit characteristic vectors, and hence is contained in $\R_{>0} \Lambda^-$ as required.
\end{proof}

\medskip

\begin{proof}[Proof of ($\Rightarrow$) in Theorem \ref{thm:FunSh}]
Let $\mathcal{F} \in \ShS$ and assume that the associated combinatorial sheaf $F = \Gamma_{\mathcal{S}}(\mathcal{F})$ satisfies Definition \ref{def:fun} so that $F \in  \mathbf{Fun}^\bullet_\Lambda(\mathcal{S},\mathbb{K})$.   Since $SS(\mathcal{F})$ is the closure of the set of characteristic vectors for $\mathcal{F}$, in order to establish the forward direction of Theorem \ref{thm:FunSh} it is enough to show that condition (2) from Lemma \ref{lem:StratifiedGeneric2} holds at every stratum $s_0$.  This is done in the following Lemmas \ref{lem:2strata}--\ref{lem:A2A1}. 
\end{proof}

\label{sec:2and3strata}
\begin{lemma}
\label{lem:2strata}
Suppose $q \in s_0$ where $s_0$ is a 2- or 3-dimensional stratum. Then, condition (2) of Lemma \ref{lem:StratifiedGeneric2} holds.
\end{lemma}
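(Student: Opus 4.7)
The plan is to verify condition (2) of Lemma \ref{lem:StratifiedGeneric2} case by case, splitting on the dimension and type of $s_0$. First, if $\dim s_0 = 3$, then $T_q s_0 = T_q(J^0M)$, so $(T_q s_0)^\perp = 0$, the set $U$ is empty, and condition (2) holds vacuously. For $\dim s_0 = 2$ the space $(T_q s_0)^\perp$ is one-dimensional, so $U$ consists of at most the two unit normals $\pm \xi_0$ to $s_0$; I would then split into the Legendrian and vertical sub-cases.

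For a Legendrian $s_0$, I would exploit the fact that the downward normal $-\xi_0$ is explicitly excused in condition (2), so only the upward normal $+\xi_0$ requires checking. Choosing local coordinates so that $s_0 = \{z = 0\}$ near $q$, I would take $f = z$. The sublevel $\{z < -\epsilon\} \cap B_\delta(q)$ is a small ball in the 3-stratum $s_{\mathit{down}}$ just below $s_0$, so by Observation \ref{observe:1} its sections compute $F(s_{\mathit{down}})$. The sublevel $\{z < \epsilon\} \cap B_\delta(q)$ can be expanded upward to the full ball $B_\delta(q)$ via the Microlocal Morse Lemma \ref{lem:Microlocal}: along the moving boundary the outward conormal $+\partial_z$ lies inside the 3-stratum $s_{\mathit{up}}$ above $s_0$, where constructibility forces $SS(\mathcal{F}) \subset (Ts_{\mathit{up}})^\perp = 0$. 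This reduces $Mo_{q,f}(\mathcal{F})$ to $\mathrm{cone}(F(s_0) \to F(s_{\mathit{down}}))$, which is acyclic because $s_0 \to s_{\mathit{down}}$ is downward in the sense of Definition \ref{def:downward}, hence a quasi-isomorphism by Definition \ref{def:fun}(1).

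For vertical $s_0$ I would run essentially the same argument in the horizontal direction. Now $\pm \xi_0$ are horizontal, $s_0$ lies in $\Lambda^F_{-1}$, and the two horizontally adjacent 3-strata $s_L, s_R$ also lie in $\Lambda^F_{-1}$. The same Microlocal Morse expansion into whichever 3-stratum lies in the direction of $\nabla f_q$ yields $Mo_{q,f}(\mathcal{F}) \simeq \mathrm{cone}(F(s_0) \to F(s_L))$ or $\mathrm{cone}(F(s_0) \to F(s_R))$, and each cone is acyclic by Definition \ref{def:fun}(2).

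The main obstacle to verify carefully is the Microlocal Morse Lemma expansion: at each step the outward conormal along the moving boundary must avoid $SS(\mathcal{F})$. Since the expansions are designed to stay within 3-strata --- where constructibility forces $SS(\mathcal{F})$ into the zero section --- this is in principle a routine check, but one should set up the geometry explicitly (in particular choosing $\delta$ small enough that the path of expansion stays in the intended 3-stratum and does not cross any lower-dimensional stratum).
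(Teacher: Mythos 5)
Your proposal follows essentially the same route as the paper: dispose of the 3-dimensional case vacuously (where $U = \emptyset$), and for a 2-stratum reduce the Morse group to the cone of the generization map from $F(s_0)$ into the 3-stratum on the side opposite $\nabla f_q$, which is acyclic by Definition~\ref{def:fun}(1) in the Legendrian case and by Definition~\ref{def:fun}(2) in the vertical case. The paper handles the two sub-cases of 2-strata in a single sentence rather than splitting them explicitly, but the substance is the same. One small slip to fix: $f = z$ in the flattening coordinates where $s_0 = \{z = 0\}$ is not a stratified Morse function, since $f|_{s_0}$ is identically zero and therefore has no nondegenerate critical point. The Morse group in \eqref{eq:MorseGroup} and the characteristic-vector condition are both formulated in terms of stratified Morse functions, so you should instead take $f$ with $(\nabla f)_q = \xi_0$ and with $f|_{s_0}$ having a nondegenerate local minimum at $q$ (e.g.\ locally $f = z + |x|^2$), as the paper does. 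With that correction the sublevel sets have the topology you describe and the rest of the argument goes through unchanged.
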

 
\begin{proof}
Condition (2) of Lemma \ref{lem:StratifiedGeneric2} is vacuously true if $s_0$ is a 3-stratum.   

When $s_0$ is a 2-stratum, let $\xi \in U$ and, in the case that $x_0$ is a Legendrian $2$-stratum, assume $\xi$ is not the downward orthogonal vector to $s_0$. 
 Suppose that $s_0$ is bordered by $3$-strata $s_a$ and $s_b$, labeled so that $\xi$ (resp. $-\xi$) points into $s_a$ (resp. $s_b$);   in the case that $s_0$ is a Legendrian 2-stratum, $s_0$ bounds $s_a$ from below.  To complete the proof, we check the vanishing of the Morse group $Mo_{q,f}(\mathcal{F})$ for a stratified Morse function $f \in C^\infty(B_\delta(q), \R)$ defined in a small ball $B_\delta(q) \subset J^0M$ having $(\nabla f)_q = \xi$  
 and such that $f|_{s_0}$ has a local minimum at $q$.  For $0 < \epsilon \ll \delta \ll 1$, the sublevel sets $f^{-1}(-\infty, f(q) + \epsilon)$ and $f^{-1}(-\infty, f(q) - \epsilon)$ are respectively (topologically) a small ball  divided in half by $s_0$ and a small ball contained in $s_b$.  Then, using the Microlocal Morse Lemma and Observation \ref{observe:1} we see that
\begin{align*}
Mo_{q,f}(\mathcal{F}) & = \mathit{Cone}\left(\Gamma(f^{-1}(-\infty, f(q) + \epsilon) ; \mathcal{F}) \rightarrow \Gamma(f^{-1}(-\infty, f(q) - \epsilon) ; \mathcal{F}) \right)  \\ & \cong \mathit{Cone}\left(\Gamma(s_0^* ; \mathcal{F}) \rightarrow \Gamma(s_b^* ; \mathcal{F}) \right) = \mathit{Cone}( F(s_0) \rightarrow F(s_b))  \\ &\cong 0.
\end{align*}
At the last equality we used that $F(s_0) \rightarrow F(s_b)$ is either downward or satisfies Definition \ref{def:fun} (2), and hence is a quasi-isomorphism since $F \in \mathbf{Fun}^\bullet_\Lambda(\mathcal{S},\mathbb{K})$.
\end{proof}

\begin{lemma}
\label{lem:A1}
Suppose $q \in s_0$ where $s_0$ is a 1-stratum of type ($F^2$), ($FV$) or  ($V^2$).  Then, condition (2) of Lemma \ref{lem:StratifiedGeneric2} holds. 
\end{lemma}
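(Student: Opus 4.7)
The plan is to verify condition (2) of Lemma \ref{lem:StratifiedGeneric2} by reducing the acyclicity of the Morse group to that of a top-dimensional square, which is then supplied by Lemma \ref{lem:topdim}. For $s_0$ of any of the three types $(F^2)$, $(FV)$, $(V^2)$, the description in Section \ref{sec:list} gives the same local structure: in a small disk transverse to $s_0$ at $q$, two ``planes'' (either two Legendrian front planes, two vertical planes, or one of each) cross transversely, producing four 2-strata bordering $s_0$ (four rays in the transverse disk) that divide the disk into four open sectors, namely the transverse slices of the four 3-strata having $s_0$ in their closures.

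Given $\xi \in U$, the exclusion of vectors orthogonal to some $\overline{Ts_a}$ for $s_0 \leq s_a$ forces $\xi$ to lie strictly inside one of the four open sectors. Let $s_3$ denote the 3-stratum opposite to the one containing $\xi$, and let $s_2^a$, $s_2^b$ be the two 2-strata bordering $s_3$ at $s_0$. Following the Morse setup in the proof of the converse of Theorem \ref{thm:FunSh}, I would choose a stratified Morse function $f$ on a small ball $B_\delta(q)$ with $\nabla f_q = \xi$ (using the Euclidean metric in local coordinates) and with $f|_{s_0}$ having a nondegenerate local minimum at $q$; the stratified Morse condition at $q$ is automatic from $\xi \in U$.

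For $0 < \epsilon \ll \delta$, the sublevel set $U^- = f^{-1}(-\infty, f(q)-\epsilon) \cap B_\delta(q)$ is contained in the single 3-stratum opposite to $s_3$ (i.e.\ in the sector into which $\xi$ points), while $U^+ = f^{-1}(-\infty, f(q)+\epsilon) \cap B_\delta(q)$ deformation retracts onto an open neighborhood of $s_0 \cup s_2^a \cup s_2^b \cup s_3$ inside $B_\delta(q)$. Applying the \v{C}ech Lemma to the open cover of $U^+$ by contractible neighborhoods of $s_0 \cap U^+$, $s_2^a \cap U^+$, $s_2^b \cap U^+$, and $s_3 \cap U^+$, combined with Observation \ref{observe:1} and the Microlocal Morse Lemma (for the identification of $\Gamma(U^-; \mathcal{F})$ as well as for deforming $U^+$ to the neighborhood above), shows that $Mo_{q,f}(\mathcal{F})$ is quasi-isomorphic to the total complex of
\[
\xymatrix{ & F(s_2^a) \ar[rd] & \\ F(s_0) \ar[ru] \ar[rd] & & F(s_3) \\ & F(s_2^b) \ar[ru] & }
\]
This diagram is a top-dimensional square associated to $(s_0, s_2^a, s_2^b, s_3)$: condition (1) holds by construction, and condition (2) is vacuous because $s_0$ is not of type (Cu). By Lemma \ref{lem:topdim}, its total complex is acyclic, hence $Mo_{q,f}(\mathcal{F}) \simeq 0$ and condition (2) of Lemma \ref{lem:StratifiedGeneric2} is verified.

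The main obstacle I anticipate is the careful execution of the \v{C}ech-plus-Microlocal-Morse identification, since the shape of $U^+$ and the deformation to the desired four-stratum neighborhood depend on which sector $\xi$ inhabits; moreover, for types $(FV)$ and $(V^2)$ the 2-strata bordering $s_0$ include vertical strata belonging to the same $\Lambda^F$-stratum as adjacent 3-strata, so collapsing the contributions of the three ``irrelevant'' sectors requires Definition \ref{def:fun} (2) together with Lemma \ref{lem:Downward}, rather than downward quasi-isomorphisms alone. Once these quasi-isomorphisms are used to contract everything outside the chosen square, the answer comes out identically as the total complex displayed above.
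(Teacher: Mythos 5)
Your target is the right one — the same top-dimensional square $(s_0, s_2^a, s_2^b, s_3)$ as in the paper, followed by an appeal to Lemma \ref{lem:topdim} — but the Morse group computation you propose to get there is wrong. First, the direction is reversed: since $(\nabla f)_q = \xi$, the function $f$ \emph{increases} in the $\xi$-direction, so the lower sublevel set $f^{-1}(-\infty, f(q)-\epsilon)\cap B_\delta(q)$ lies in the sector \emph{opposite} to $\xi$ (where $f$ decreases), not, as you assert, ``in the sector into which $\xi$ points.'' Second, that lower sublevel set is not contained in a single 3-stratum. Using the local model $\mathcal{S}|_{B_\delta(q)} \cong \R^3_{t_1,t_2,t_3}$ with the two intersecting planes $\{t_2=0\}$, $\{t_3=0\}$ and $\xi$ pointing into $\{t_2<0,\,t_3<0\}$, one sees that $\{f< f(q)-\epsilon\}$ meets the $2$-strata $s_2^a$ and $s_2^b$ as well as $s_3$; it is covered by two balls $V, V'$ centered on $s_2^a$ and $s_2^b$ with $V\cap V'\subset s_3$, and the \v{C}ech Lemma applied \emph{here}, together with Observation \ref{observe:1}, yields $\Gamma(U^-;\mathcal{F}) \cong \mathit{Tot}^\bullet\bigl(F(s_2^a)\oplus F(s_2^b) \to F(s_3)\bigr)$. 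Meanwhile $f^{-1}(-\infty, f(q)+\epsilon)\cap B_\delta(q)$ is just a small topological ball about $q$, so Observation \ref{observe:1} gives $\Gamma(U^+;\mathcal{F})\cong F(s_0)$ directly; no four-element \v{C}ech cover is needed on $U^+$. Taking the cone of restriction then produces the total complex of the square.

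Finally, even granting your (incorrect) identifications, the conclusion does not follow: if $\Gamma(U^+;\mathcal{F})$ were the total complex of the square and $\Gamma(U^-;\mathcal{F})\cong F(\text{opposite }3\text{-stratum})$, then $Mo_{q,f}(\mathcal{F})$ would be the cone of $\mathit{Tot}(\text{square}) \to F(\text{opposite})$, which is not the total complex of the square and is not acyclic in general — its cohomology would be that of $F(\text{opposite})$ up to shift once the square's total complex is acyclic. Your closing remark about ``collapsing the three irrelevant sectors'' via Definition \ref{def:fun} (2) and Lemma \ref{lem:Downward} is a symptom of this misidentification; in the correct computation there are no extra sectors to collapse, and the invocation of Definition \ref{def:fun} (2) happens, if at all, inside Lemma \ref{lem:topdim}, not in assembling the Morse group.
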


\begin{proof}
Recall from Remark \ref{rem:orthog} and the description of the $1$-strata that there are four $2$-strata adjacent to $s_0$ and that opposite pairs of $2$-strata piece together with $s_0$ into smooth planes.    
A local model  
 of  $\mathcal{S}|_{B_{\delta}(q)}$ in $\R^3_{t_1,t_2,t_3}$ associates each of the two intersecting planes to $\{t_2=0\}$ and $\{t_3=0\}.$ 
Here, the local coordinates $(t_1, t_2,t_3)$ are unrelated to the usual coordinates $(x,z)$ on $J^0M$.
 Using the Euclidean metric in the $(t_1,t_2,t_3)$-coordinates,  
 the collection of unit vectors $U$ from Lemma \ref{lem:StratifiedGeneric2} is a circle in the $t_2t_3$-plane with $4$ points removed.  (These points are $(0, 0, \pm1)$ and $(0, \pm1, 0)$,  corresponding to the orthogonal complement of the 2-dimensional strata in $\{t_2=0\}$ and $\{t_3=0\}$.)

To verify condition (2) of Lemma \ref{lem:StratifiedGeneric2}, let $\xi \in U$.  We can assume the coordinates have been chosen so that $\xi$ points into the quadrant $\{t_2 <0\} \cap \{t_3 <0 \}$.  Notate the strata that form the closed quadrant {\it opposite} to $\xi$ as 
\[
\begin{array}{ll}
T_1 = s_0 = \{t_2 =0\} \cap \{t_3 =0\}, \quad & T_1T_2 = \{t_2>0\} \cap \{t_3 =0\}, \quad  \\  T_1T_3 = \{t_2=0\} \cap \{t_3 >0\}, \quad &  T_1T_2T_3 = \{t_2>0\} \cap \{t_3 >0\}.
\end{array}
\]
  We check acyclicity of $Mo_{q,f}(\mathcal{F})$ using a stratified Morse $f \in C^\infty(B_\delta(q), \R)$ 
  having $(\nabla f)_q = \xi$ and such that $f|_{s_0}$ has a local minimum at $q$.  For $0 < \epsilon \ll \delta \ll 1$, note that (topologically) $f^{-1}(-\infty, f(q)+\epsilon)$ is a small ball centered at $q$, while $f^{-1}(-\infty, f(q) - \epsilon) = V \cup V'$ where $V$ and $V'$ are small balls centered at points on $T_1T_2$ and $T_1T_3$ respectively and $V \cap V'$ is a ball contained in $T_1T_2T_3$.  Thus, using the Observation \ref{observe:1} and the \v{C}ech Lemma, $Mo_{q,f}(\mathcal{F})$ is quasi-isomorphic to the total complex of the diagram
  \[
  \xymatrix{  & F(T_1T_2) \ar[rd] & \\ F(T_1) \ar[ru] \ar[rd] & & F(T_1T_2T_3) \\ & F(T_1T_3) \ar[ru] & }
  \]
which is acyclic by Lemma \ref{lem:topdim} since $T_1, T_1T_2, T_1T_3$, and $T_1T_2T_3$ form a top dimensional square.

\end{proof}

\begin{lemma}
\label{lem:A2}
Suppose $q \in s_0$ where $s_0$ is a 1-stratum of type ($Cu$).  
 Then, condition (2) of Lemma \ref{lem:StratifiedGeneric2} holds.
\end{lemma}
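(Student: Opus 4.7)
The plan is to follow the template of Lemmas \ref{lem:2strata}--\ref{lem:A1}: set up local coordinates near the cusp, pick an explicit stratified Morse function, and use the \v{C}ech Lemma together with Definition \ref{def:fun}(1) and Lemma \ref{lem:topdim} to verify that the Morse group is acyclic for each $\xi \in U$. Introduce local coordinates $(a,b,z)$ on $J^0M$ near $q$ so that the cusp edge $s_0$ is the $a$-axis and the two adjacent Legendrian $2$-strata $s_u, s_l$ have local equations $z = \pm b^{3/2}$ for $b \geq 0$. The $3$-strata meeting $s_0$ near $q$ are then $s_M$ (``between'' the sheets: $b>0$ and $|z|<b^{3/2}$) and $s_N$ (the connected ``outside'' region). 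Because $\overline{T s_u}|_q = \overline{T s_l}|_q = \mathrm{span}(\partial_a, \partial_b)$, the set $U$ consists of unit vectors $\xi = \xi_b \partial_b + \xi_z \partial_z$ in the $(b,z)$-plane with $\xi_b \neq 0$. For each such $\xi$ I would take the stratified Morse function $f(a,b,z) = \xi_b b + \xi_z z + a^2$, which has $\nabla f_q = \xi$ and $f|_{s_0}$ a non-degenerate local minimum, and examine $A = \{f<\epsilon\} \cap B_\delta(q)$ and $B = \{f<-\epsilon\} \cap B_\delta(q)$ for $0 < \epsilon \ll \delta \ll 1$. Since $A$ is an open neighborhood of $q$ meeting every stratum of $s_0^*$ in a connected piece with the same closure-incidence pattern as $s_0^*$, the usual argument gives $\Gamma(A;\mathcal{F}) \cong F(s_0)$.

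When $\xi_b > 0$, the cap $B$ sits opposite $\xi$ on the $\{b<0\}$-side; for $\delta$ sufficiently small depending on $\xi$, a direct estimate will confirm that $B$ avoids $s_u, s_l, s_M$ altogether, so $B \subset s_N$ and $\Gamma(B;\mathcal{F}) \cong F(s_N)$ by Observation \ref{observe:1}. The restriction $\Gamma(A;\mathcal{F}) \to \Gamma(B;\mathcal{F})$ is identified with the generization $F(s_0 \to s_N)$, and since $s_0$ is bordered below in $z$ by $s_N$ this map is downward, hence a quasi-isomorphism by Definition \ref{def:fun}(1); thus $Mo_{q,f}(\mathcal{F})$ is acyclic.

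When $\xi_b < 0$, the cap $B$ lies in $\{b>0\}$ and meets $s_u, s_l, s_M$ along with two components of $s_N$. The plan here is to apply the \v{C}ech Lemma to the cover $W_u = B \cap \{z > -b^{3/2}\}$, $W_l = B \cap \{z < b^{3/2}\}$, whose intersection is $B \cap s_M$. By the same star-neighborhood reasoning as for $A$, $\Gamma(W_u;\mathcal{F}) \cong F(s_u)$, $\Gamma(W_l;\mathcal{F}) \cong F(s_l)$, and $\Gamma(W_u \cap W_l;\mathcal{F}) \cong F(s_M)$, so the \v{C}ech complex exhibits $\Gamma(B;\mathcal{F})$ as the homotopy pullback $F(s_u) \times^h_{F(s_M)} F(s_l)$. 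The acyclicity of the top-dimensional square at $s_0$ (Lemma \ref{lem:topdim}) identifies this pullback with $F(s_0)$, so the restriction $\Gamma(A) \to \Gamma(B)$ becomes the identity on $F(s_0)$, making $Mo_{q,f}(\mathcal{F})$ acyclic once more.

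The main hurdle I expect is justifying the identifications of $\Gamma$ of the non-standard open neighborhoods $A, W_u, W_l, W_u \cap W_l$ with $F$ of their corresponding minimum-dimensional strata. Each such open set is star-shaped around its lowest-dimensional stratum and meets every stratum it intersects in a connected piece matching the closure-incidence pattern of the associated open star; combining this with the equivalence $\Gamma_\mathcal{S}$ of Corollary \ref{cor:Equiv} (applied to the induced substratification) should deliver the identifications, but making each of these quasi-isomorphisms precise, and choosing $\delta$ carefully enough in the $\xi_b>0$ case so that $B$ genuinely stays inside $s_N$, will take some care.
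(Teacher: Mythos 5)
Your proposal is correct and follows essentially the same approach as the paper's proof: split $U$ into the two arcs $\xi_b>0$ and $\xi_b<0$, handle the first via the downward map $F(s_0)\to F(s_N)$, and handle the second via a two-open-set \v{C}ech cover reducing to the top-dimensional square acyclicity of Lemma \ref{lem:topdim}. The only cosmetic difference is that where the paper dispatches the latter case with ``as in the previous Lemma'' (using small balls centered on the two $2$-strata $s_u$, $s_l$), you use the explicit half-space cover $W_u$, $W_l$, which yields the same two-term \v{C}ech complex.
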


\begin{proof}

Label strata as $T_1, T_1T_2, T_1 T_3, T_1 T_2 T_3$ where $T_1=s_0$ denotes the cusp edge; $T_1T_2$ is the lower Legendrian $2$-stratum that meets the cusp edge;  $T_1T_3$ is the upper Legendrian $2$-stratum; and $T_1T_2T_3$ is the 3-stratum ``inside"  the cusping Legendrian 2-strata, respectively. Let $O$ denote the 3-stratum ``outside" the cusping 2-strata.  The set $U$ from Lemma \ref{lem:StratifiedGeneric2} has two components, $U_1$ and $U_2$,  corresponding to unit vectors orthogonal to the cusp edge and pointing into or out of the half space containing $T_1T_2T_3$ respectively.
Again, given $\xi \in U$ we compute $Mo_{q,f}(\mathcal{F})$ using a stratified Morse $f \in C^\infty(B_\delta(q), \R)$ 
  having $(\nabla f)_q = \xi\in U$ and such that $f|_{s_0}$ has a local minimum at $q$.  

When $\xi \in U_2$, the computation of the Morse group is as in the previous Lemma. (It's acyclic.)   When $\xi \in U_1$, using Observation \ref{observe:1} we have
\[ \Gamma(f^{-1}(-\infty, f(q) + \epsilon);\mathcal{F}) \cong F(T_1) \quad \mbox{and} \quad \Gamma(f^{-1}(-\infty, f(q)-\epsilon);\mathcal{F}) \cong F(O).\]
Since $T_1 \rightarrow O$ is downward, $F(T_1) \rightarrow F(O)$ is a quasi-isomorphism, and it follows that $Mo_{q,f}(\mathcal{F})$ is acyclic as required.

\end{proof}

\begin{lemma}
\label{lem:A1A1}
Suppose $q \in s_0$ where $s_0$ is a $0$-stratum of type ($F^3$), ($F^2V$) or  ($FV^2$). 
 Then, condition (2) of Lemma \ref{lem:StratifiedGeneric2} holds.
\end{lemma}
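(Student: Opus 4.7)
The plan is as follows.  In all three cases $(F^3)$, $(F^2V)$, $(FV^2)$, the stratification of $\mathcal{S}$ near $q$ is locally modeled by three pairwise-transverse surfaces through $q$ in $B_\delta(q) \subset \R^3$, dividing $B_\delta(q)$ into $8$ three-strata (``octants''), $12$ two-strata (face quadrants), $6$ one-strata (half-axes), and $\{q\}$.  In local coordinates $(t_1,t_2,t_3)$ chosen so that the three surfaces are the coordinate planes $\{t_i=0\}$, the set $U$ from Lemma \ref{lem:StratifiedGeneric2} is $S^2$ with the three great circles $S^2\cap\{t_i=0\}$ removed, yielding eight open components, one per octant.

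For $\xi \in U$, take a stratified Morse function $f$ with $(\nabla f)_q=\xi$; the requirement that $f|_{s_0}$ attain a local minimum at $q$ is vacuous since $s_0=\{q\}$.  By Observation \ref{observe:1}, $\Gamma(f^{-1}(-\infty,f(q)+\epsilon)\cap B_\delta(q); \mathcal{F}) \cong F(s_0)$, and $Mo_{q,f}(\mathcal{F}) \cong \mathit{Cone}(F(s_0) \to \Gamma(H; \mathcal{F}))$ with $H = f^{-1}(-\infty, f(q)-\epsilon)\cap B_\delta(q)$ the sub-level half-ball on the $-\xi$-side.  I would evaluate $\Gamma(H; \mathcal{F})$ via the \v{C}ech Lemma with the three-element cover $W_i = H \cap \{(-\xi_i)\, t_i > c\}$ for $i=1,2,3$ and small $c>0$; these cover $H$ by a pigeonhole argument on the defining linear inequality of $H$.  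Adapting the cover-and-reduce technique from the proof of Lemma \ref{lem:A1} (the single 1-stratum in a two-plane model) in turn to each of $W_i$, $W_i\cap W_j$, and $W_1\cap W_2\cap W_3$, one identifies their sections as quasi-isomorphic to $F(s_i)$, $F(s_{ij})$, and $F(s_b)$ respectively, where $s_i$, $s_{ij}$, and $s_b$ are the 1-, 2-, and 3-strata in the closure of the opposite octant $s_b$.  The \v{C}ech Lemma then gives
\[
Mo_{q,f}(\mathcal{F}) \cong \mathrm{Tot}\Bigl( F(s_0) \to \bigoplus_{i} F(s_i) \to \bigoplus_{i<j} F(s_{ij}) \to F(s_b) \Bigr).
\]

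It then remains to show this augmented-cube total complex is acyclic, which I expect to be the main obstacle.  The three 2-faces of the cube of the form $\bigl(s_i \to (s_{ij},s_{ik}) \to s_b\bigr)$ (holding one 1-stratum fixed) are top-dimensional squares in the sense of Lemma \ref{lem:topdim} and so contribute acyclic sub-totals.  When $s_b$ is the octant directly below $q$ in the $z$-direction (which occurs in the $(F^3)$ case for $\xi$ pointing into the octant directly above $q$), every arrow in the cube is a (generalized) downward generization, so Lemma \ref{lem:Downward} and Definition \ref{def:fun}(1) together imply every arrow is a quasi-isomorphism and the total complex is acyclic.  For other $\xi$, and in the $(F^2V)$ and $(FV^2)$ cases (where the direct column through $q$ passes through a vertical 2- or 1-stratum rather than an octant), Lemma \ref{lem:Downward} alone does not suffice; I would combine it with Definition \ref{def:fun}(2) for generizations between strata lying in a common $\Lambda^F_{-1}$ stratum (relevant along the vertical surfaces) and Definition \ref{def:fun}(3) at the 1-strata of type $(F^2)$ and $(FV)$ emanating from $q$, in a case analysis on the octant of $\xi$ and a filtration argument on the cube whose successive quotients are either top-dimensional squares or crossing squares, all acyclic.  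Carrying out this orchestration uniformly across the eight components of $U$ and the three stratum types is the principal technical difficulty.
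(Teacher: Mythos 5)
Your setup matches the paper's: same local model in $(t_1,t_2,t_3)$-coordinates, same \v{C}ech cover of the sub-level half-ball, and the same identification of $Mo_{q,f}(\mathcal{F})$ with the total complex of the augmented cube
\[
F(q)\to \bigoplus_i F(T_i)\to \bigoplus_{j<k}F(T_jT_k)\to F(T_1T_2T_3).
\]
You also correctly spot that top-dimensional squares (Lemma~\ref{lem:topdim}) are the key tool. But the acyclicity step is where you leave a genuine gap. You propose a case analysis on the octant of $\xi$ plus a filtration ``whose successive quotients are either top-dimensional squares or crossing squares,'' and you explicitly flag the orchestration as unresolved. The missing idea is the paper's \emph{shift argument}, which makes the acyclicity proof uniform in $\xi$ and uniform across the types $(F^3),(F^2V),(FV^2)$ and renders the case analysis unnecessary.

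The paper's uniform argument is: after relabeling one may assume $T_2T_3$ lies on a Legendrian front plane (there is always at least one such plane among the three). Filter the cube by the subcomplex $F(T_1)\to F(T_1T_2)\oplus F(T_1T_3)\to F(T_1T_2T_3)$, which is a top-dimensional square and hence acyclic. The quotient is $F(q)\to F(T_2)\oplus F(T_3)\to F(T_2T_3)$; this is \emph{not} a top-dimensional square (dimensions $0,1,1,2$), and this is precisely what your proposal does not know how to handle. The paper resolves it by pushing each of $q,T_2,T_3,T_2T_3$ in the $\pm t_1$-direction, with the sign chosen so the pushed strata lie below the Legendrian plane containing $T_2T_3$; each push is a generization map that is (generalized) downward, hence a quasi-isomorphism, and the pushed diagram \emph{is} a top-dimensional square. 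That one move replaces your proposed octant-by-octant analysis. Your claim that for the $(F^3)$-octant-directly-above case ``every arrow in the cube is a (generalized) downward generization'' is in fact correct, but it covers only one component of $U$ in only one of the three types; the remaining components and the types $(F^2V),(FV^2)$ are exactly where your proposal stalls and where the shift argument is needed.
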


\begin{proof}
The $0$-stratum $s_0$ lies at the intersection of three front and/or vertical planes, so we can work with a local model  
of  $\mathcal{S}|_{B_{\delta}(q)}$ in $\R^3_{t_1,t_2,t_3}$ where each plane is defined by $\{t_i = 0\}.$
 Using the Euclidean metric in $\R^3_{t_1,t_2,t_3}$, the collection of unit vectors $U$ from Lemma \ref{lem:StratifiedGeneric2} is a $2$-sphere divided into $8$ components by removing its intersections with the coordinate planes.  Given $\xi \in U$, we can assume coordinates have been chosen so that $\xi$ points into the octant with all $t_i <0$.  Label the strata that form the closed  octant {\it opposite} from $\xi$ as $s_0$, $T_i$, $T_jT_k$, $T_1T_2T_3$ with $1\leq i \leq 3$ and $1 \leq j < k \leq 3$ where $s_0$ is the origin, $T_i$ is the positive $t_i$-axis, $T_jT_k$ is the quadrant in the $t_jt_k$-plane with $t_j>0$ and $t_k >0$, and $T_1T_2T_3$ is the open octant with $t_i>0$ for $1 \leq i \leq 3$.  Moreover, we can assume our choice of coordinates is such that $T_2 T_3$ is a Legendrian 2-stratum.

We compute the Morse group $Mo_{q,f}(\mathcal{F})$ for a linear function $f \in C^\infty(B_\delta(q))$ with $(\nabla f)_q =\xi$.  
  Then, $\Gamma(f^{-1}(-\infty, f(q) + \epsilon);\mathcal{F}) \cong F(s_0)$ and $f^{-1}(-\infty, f(q) - \epsilon)$ has an open covering $U_1 \cup U_2 \cup U_3$  such that (using Observation \ref{observe:1})
\[
\Gamma(U_i;\mathcal{F}) \cong \Gamma(T_i^*; \mathcal{F}), \quad \Gamma(U_j\cap U_k;\mathcal{F}) \cong \Gamma(T_jT_k^*; \mathcal{F}), \quad \Gamma(U_1\cap U_2 \cap U_3;\mathcal{F}) \cong \Gamma(T_1T_2T_3^*; \mathcal{F})
\]
(with all quasi-isomorphisms given by restrictions).  Thus, applying the \v{C}ech Lemma we see that $Mo_{q,f}(\mathcal{F})$ is quasi-isomorphic to the total complex of 
\begin{equation}
\label{eq:2strata}
\xymatrix{ & F(T_1)   \ar[r]  \ar[rd]  & 
 F(T_1T_2) \ar[rd] & \\
 F(q) \ar[ru] \ar[r] \ar[rd] &
  F(T_2) \ar[rd] \ar[ru] &
 F(T_1T_3) \ar[r] &
 F(T_1 T_2 T_3) \\
 & F(T_3) \ar[r] \ar[ru] &
 F(T_2 T _3) \ar[ru] &
} 
\end{equation}
where the arrows are generization maps with signs as in the \v{C}ech complex.  Consider the subcomplex and  quotient complex, given respectively by
\begin{equation}
\label{eq:Fig8Subcomplex}
F(T_1) \rightarrow
 F(T_1T_2) 
\oplus F(T_1T_3)
\rightarrow
F(T_{1}T_{2}T_{3}),
\end{equation}
\begin{equation}
\label{eq:Fig8Subcomplex2}
F(q) \rightarrow
 F(T_2) 
\oplus F(T_3)
\rightarrow
F(T_2T_3).
\end{equation}
Note the subcomplex (\ref{eq:Fig8Subcomplex}) is acyclic by Lemma \ref{lem:topdim} since it is the total complex of a top dimensional square.  
So $Mo_{q,f}(\mathcal{F})$ is quasi-isomorphic to the quotient complex (\ref{eq:Fig8Subcomplex2}).

Next, consider the top-dimensional square obtained by ``shifting'' all the strata $X$ that appear in (\ref{eq:Fig8Subcomplex2}) in either the $\pm t_1$-direction where the sign is chosen so that the shifted strata sit below the Legendrian front plane containing $T_2T_3$ (with respect to the $z$-coordinate on $J^0M$.)  Denote the shifted strata as $T_1^\pm X$.  [For example, if the shift is in the positive $t_1$-direction, $T_1^\pm q = T_1, T_1^\pm T_j = T_1 T_j, T_1^\pm T_2T_3 = T_1 T_2T_3$ and we recover the original subcomplex (\ref{eq:Fig8Subcomplex}).]    
Each generization map $F(X) \rightarrow F(T_1^\pm X)$ is a quasi-isomorphism since $X \rightarrow T_1^\pm X$ is (generalized) downward.  Thus, the quotient complex (\ref{eq:Fig8Subcomplex2}) is quasi-isomorphic to the total complex of a top-dimensional square, and Lemma \ref{lem:topdim} implies it is acyclic.

\end{proof}

\begin{lemma}
\label{lem:A2A1}
Suppose $q \in s_0$ where $s_0$ is a $0$-strata of type ($FCu$) or ($VCu$).  Then, condition (2) of Lemma \ref{lem:StratifiedGeneric2} holds.
\end{lemma}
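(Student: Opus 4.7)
The plan is to adapt the methods of Lemmas \ref{lem:A2} and \ref{lem:A1A1} to the more complicated local configurations at an $(FCu)$ or $(VCu)$ point, again verifying condition (2) of Lemma \ref{lem:StratifiedGeneric2} by direct case analysis on the components of $U$. First I would fix local coordinates $(t_1,t_2,t_3)$ on $B_\delta(q)$ in which the cusp edge lies along the $t_1$-axis, the two cusping Legendrian $2$-strata share tangent plane $\{t_3=0\}$ at $q$ and open into the half-space $\{t_3>0\}$, and the third plane is modeled by $\{t_1=0\}$ --- this being a Legendrian front plane in the $(FCu)$ case and a vertical plane in the $(VCu)$ case. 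I would then enumerate the strata of $\mathcal{S}|_{B_\delta(q)}$: the $0$-stratum $q$; four $1$-strata, namely the two half-cusp-edges along $\pm t_1$ together with the two arcs in the third plane where it meets the cusping sheets; the half-pieces obtained by cutting each of the three planes along the $1$-strata through $q$; and the surrounding $3$-strata.

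For each connected component of $U$, I would pick a linear stratified Morse function $f$ with $(\nabla f)_q = \xi$ and compute $Mo_{q,f}(\mathcal{F})$ using Observation \ref{observe:1} together with the \v{C}ech Lemma, applied to a cover of $f^{-1}(-\infty, f(q)-\epsilon)$ by small balls centered on points of the strata lying in the closed half-space opposite $\xi$. I expect the resulting total complexes to fall into three families, each acyclic by previously established facts: (i) when $\xi$ points away from the cusping region so the sublevel set is contained in a single $3$-stratum together with its bordering $2$- and $1$-strata, the Morse group is a cone along a single downward or within-$\Lambda_k^F$ generization, which is a quasi-isomorphism by Definition \ref{def:fun} (1) or (2); (ii) when $\xi$ points into a sector which meets a transverse intersection of planes but is free of the cusp, the Morse group is the total complex of a top-dimensional square and hence acyclic by Lemma \ref{lem:topdim}; (iii) when $\xi$ points into the cusp opening, the argument combines the $(A_2)$-type reduction of Lemma \ref{lem:A2} --- where the generalized downward generization from the cusp edge to the outside $3$-stratum is a quasi-isomorphism by Lemma \ref{lem:Downward} --- with the subcomplex/quotient-complex argument of Lemma \ref{lem:A1A1} that reduces the diagram (\ref{eq:2strata}) to top-dimensional squares via the acyclic subcomplex (\ref{eq:Fig8Subcomplex}) and the downward-shifted quotient (\ref{eq:Fig8Subcomplex2}).

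The hard part will be the bookkeeping. Because the two cusping sheets are tangent at $q$, the usual orthogonality picture of Lemma \ref{lem:A1A1} is not available: $U$ is no longer a sphere divided into eight octants by three coordinate planes, but rather has a more intricate decomposition reflecting the cusp geometry. In the $(VCu)$ case, all generization maps across the third plane are quasi-isomorphisms by Definition \ref{def:fun} (2), which simplifies several subcases. In the $(FCu)$ case, Definition \ref{def:fun} (3) is likely to enter directly whenever the sublevel-set geometry produces a diagram of the shape shown in Figure \ref{fig:Cross} involving the third Legendrian plane and one of the cusping sheets, so the subcase enumeration must track the $z$-ordering of the three Legendrian planes near $q$. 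I would organize the enumeration by whether $\xi$ lies above or below $\{t_3=0\}$ and on the $+t_1$ or $-t_1$ side of the third plane, exploiting the $t_1 \leftrightarrow -t_1$ reflection symmetry to halve the number of essentially distinct subcases. Once acyclicity of $Mo_{q,f}(\mathcal{F})$ is verified in each component of $U$, condition (2) of Lemma \ref{lem:StratifiedGeneric2} follows, completing the proof.
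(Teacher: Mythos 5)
Your high-level strategy is the right one --- \v{C}ech Lemma plus Morse group computations for each component of $U$, with acyclicity coming from downward maps, Definition~\ref{def:fun}~(2), Lemma~\ref{lem:topdim}, and the subcomplex/quotient shift argument from Lemma~\ref{lem:A1A1} --- and you correctly anticipate that $(FCu)$ and $(VCu)$ diverge in whether the shifted quasi-isomorphisms come from generalized downward maps or from Definition~\ref{def:fun}~(2). However, the local model and the resulting case breakdown have concrete problems.

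Your coordinate setup is internally inconsistent: the cusping sheets cannot be both tangent to $\{t_3=0\}$ along the cusp edge \emph{and} contained in $\{t_3\ge 0\}$. In the model $\{z^2 = x_1^3\}$ the tangent plane along the cusp edge is $\{z=0\}$, while the surface and the cusp interior lie in $\{x_1 \ge 0\}$; the tangency direction and the opening direction are different coordinate axes, and conflating them gives the wrong picture of $U$. More substantively, your three-family taxonomy does not match the actual geometry. Because $T_2$ and $T_3$ are mutually tangent at $q$, the set $U$ has exactly four components, and up to the $t_1\leftrightarrow -t_1$ symmetry the only two possibilities are that the half-ball $f^{-1}(-\infty,f(q)-\epsilon)$ meets $T_2$, $T_3$, and one cusp half-edge, or that it meets only one cusp half-edge. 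There is no sector seeing a transverse plane intersection ``free of the cusp'', so your family~(ii) does not occur. In the one-half-edge case (your family~(i)) the sublevel set is \emph{not} contained in a single $3$-stratum plus its borders: the third plane always cuts the half-ball, so the Morse group is not a cone of a single generization but the total complex of a two-open \v{C}ech diagram $F(q) \to F(\overline{T_2T_3}) \oplus F(T_1^\pm) \to F(O^\pm)$, whose acyclicity needs both that $T_1^\pm\to O^\pm$ is downward and that $q\to\overline{T_2T_3}$ is downward (in $(VCu)$) or generalized downward (in $(FCu)$). Your family~(iii) does anticipate the remaining case and the correct subcomplex/quotient reduction, but note that Definition~\ref{def:fun}~(3) never enters directly even in the $(FCu)$ case: the only top-dimensional squares that arise sit over $(Cu)$-type $1$-strata, where Lemma~\ref{lem:topdim} is established via parallel generalized downward maps rather than via condition~(3).
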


\begin{proof}
We begin by fixing notation for the various strata adjacent to $s_0$.  Let $T^\pm_1$ denote the two 1-strata sitting in the cusp edge.  
Let $T_2$ and $T_3$ denote the $1$-strata that are respectively the intersection of the lower and upper cusping sheets with the third Legendrian front plane (in the ($FCu$) case) or the vertical plane (in the ($VCu$) case).  Let $T_1^\pm T_k$ denote the unique $2$-strata bordered by $T_1^\pm$ and $T_k$ for $k =2,3$.  Let $T_2T_3$ and $\overline{T_2 T_3}$ denote the two $2$-strata within the plane (vertical or Legendrian front) which intersects the cusping sheets, where $T_2T_3$ lies between the cusping sheets in the $z$-direction.  Finally, let $T_1^\pm T_2 T_3,$ resp.  $O^\pm,$ denote the 3-stratum whose closure contains $T_1^\pm,T_2,T_3$ and which lies in the interior, resp. exterior, of the cusping sheets.   See Figure \ref{fig:TStrata}. 

The set $U$ from Lemma \ref{lem:StratifiedGeneric2} is a $2$-sphere divided into $4$ components by removing the circles orthogonal to $T_1^\pm$ and $T_2$ at $q$. (Note that $T_3$ and $T_2$ are tangent at $q$.)  Given $\xi \in U$, we compute $Mo_{q,f}(\mathcal{F})$ for a linear function $f \in C^\infty(B_\delta(q))$ with $(\nabla f)_q =\xi$.  Taking $0 < \epsilon  \ll \delta \ll 1$, the computation is done in two cases each of which corresponds to two components of $U$.

\medskip

\noindent{ \bf Case 1.}  $f^{-1}(-\infty,f(q) - \epsilon)$ intersects three of the $1$-strata.  These $1$-strata will be $T_2, T_3$, and one of $T_1^{\pm}$; reversing notation if necessary, we can assume it is $T_1^+$ which we abbreviate simply as $T_1$.  In this case, $Mo_{q,f}(\mathcal{F})$ is verified to be acyclic via an argument that is similar to the proof of Lemma \ref{lem:A1A1}:  The \v{C}ech Lemma and the Observation \ref{observe:1} show that $Mo_{q,f}(\mathcal{F})$ is quasi-isomorphic to the total complex from (\ref{eq:2strata}) and the subcomplex (\ref{eq:Fig8Subcomplex}) is an (acyclic) top-dimensional square.  In the ($FCu$) case, the acyclicness of the quotient (\ref{eq:Fig8Subcomplex2}) is established as in Lemma \ref{lem:A1A1}, since we can again shift strata into the region below the front plane  
 via generalized downward maps to obtain a top dimensional square.  In the ($VCu$) case, the same shifting argument can be applied since this time the shifting maps (in either direction) are all quasi-isomorphisms via Definition \ref{def:fun} (2).

\medskip

\noindent{ \bf Case 2.}  $f^{-1}(-\infty, f(q) - \epsilon)$ intersects only one of the $1$-strata.  The $1$-stratum will be $T_1^\pm$, and we can assume it is $T_1^+ = T_1$.  In this case, we can use an open cover $f^{-1}(-\infty, f(q) - \epsilon) = U \cup V$ where (topologically) $U$, $V$, and $U\cap V$ are small balls centered at points on $T_1$, $\overline{T_2 T_3}$, and in $O^+$.  Thus, $Mo_{q,f}(\mathcal{F})$ is quasi-isomorphic to the total complex of
\begin{equation}
\label{eq:O}
F(q) \rightarrow
 F(\overline{T_2 T_3}) 
\oplus F(T_1)
\rightarrow
F(O^+).
\end{equation}
This is acyclic since $T_1 \rightarrow O^+$ is downward and $q \rightarrow \overline{T_2 T_3}$ is downward (resp. generalized downward) in the ($VCu$) (resp. ($FCu$)) case.

\end{proof}

\begin{figure}
\labellist
\scriptsize
\pinlabel $T_2T_3$ [tr] at 208 126
\pinlabel $T_1^+$ [b] at 35 76
\pinlabel $q$ [b] at 70 116
\endlabellist
\centerline{\includegraphics[scale=.85 ]{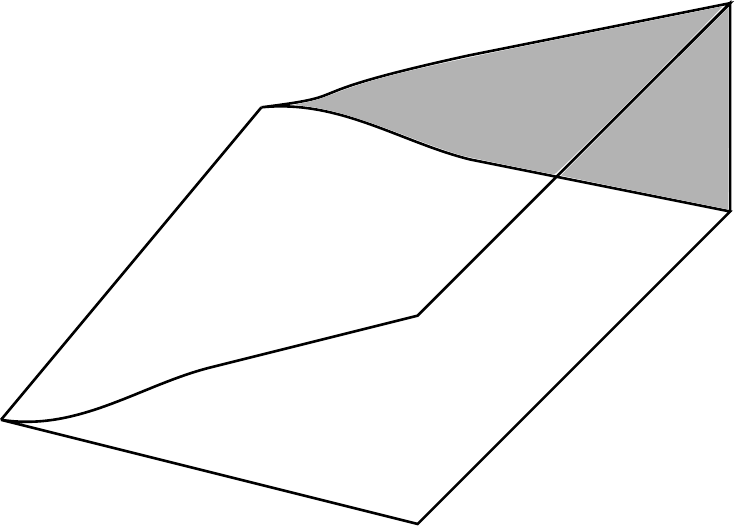}}
\caption{ 
The $(VCu)$ case.  The vertical 2-stratum $T_2T_3$ is shaded.} 
\label{fig:TStrata}
\end{figure}

\section{Simplex diagrams and generalized mapping cylinders}
\label{sec:SDandGMC}

This section is algebraic in nature.  In \ref{sec:SimplexDiagrams}, we introduce a category of {\it simplex diagrams} which are certain homotopy commutative diagrams associated to simplices.  (Throughout Section \ref{sec:SDandGMC}, the dimension of simplices may be arbitrary.)  Simplex diagrams allow for an alternate description of augmentations of the simplicial DGA that will be established in Section 5.  In constructing a combinatorial sheaf from an augmentation, we will need to be able to convert simplex diagrams into fully commutative diagrams.  This is done via a generalized mapping cylinder construction whose properties are established in Section \ref{sec:GMC}.  The reader may find it interesting to compare with Viterbo's rectification procedure in \cite{Viterbo} which serves a similar purpose.

\begin{remark}
The definitions and constructions in this section are stated for a general simplex $\Delta$.  This is in part to simplify notations.  In later sections, we will be interested in the case $\Delta = \overline{e_I}$ with $e_I \in \mathcal{E}$ where $\mathcal{E}$ is a simplicial decomposition  compatible with a Legendrian $\Lambda$.

\end{remark}

\subsection{Simplex diagrams}
\label{sec:SimplexDiagrams}

Let  $\Delta$ be an  $n$-simplex with ordered vertices $v_0, \ldots, v_n$.   
 Given $m \leq n$ and $i_0 < i_1 < \ldots < i_m$, write $F=[i_0, \ldots, i_m]$ for the $m$-dimensional face (i.e. sub-simplex) of $\Delta$ determined by the vertices $v_{i_0}, \ldots, v_{i_m}$.

\begin{definition}\label{def:simplex}
A {\bf simplex diagram} is a triple $(\Delta, V, \{a_F\})$ consisting of
\begin{itemize}
\item an $n$-simplex $\Delta$,
\item a graded $\mathbb{K}$-module $V=V^*$, and 
\item  for each face $F= [i_0, \ldots, i_m] \subset \Delta_n$, a linear map $a_F= a_{i_0 \ldots i_m}: V^* \rightarrow V^{*+1-m}$ of degree $1- \dim F$ satisfying the identity
\begin{equation} \label{eq:simplex}
\sum_{0< k <m} (-1)^{k+1} a_{i_0\ldots \widehat{i_k} \ldots i_m}+ \sum_{0\leq k \leq m} (-1)^ka_{i_k \ldots i_m} \circ a_{i_0 \ldots i_k} = 0.
\end{equation}
\end{itemize}
\end{definition}

\begin{remark} \label{rem:Simp2} 
The definition of simplex diagrams 
can be conveniently reframed using the {\it reduced} cobar differential from (\ref{eq:Dreduced}).  Assume $V$ is finite dimensional so that $V = (V^\vee)^\vee$ where $V^\vee = \oplus_{k \in \Z} (V^{-k})^\vee$ denotes the vector space dual of $V$ with grading $(V^{\vee})^k = (V^{-k})^\vee$.  Then, the data of a simplex diagram $(\Delta, V, \{a_F\})$ is equivalent to a DGA homomorphism
\[
\alpha: (\Omega C_*(\Delta), D^{\mathit{red}}_s) \rightarrow (\mathit{End}(V^\vee), 0).
\] 
Indeed, when we put $a_F = \alpha(F)^\vee$ the required equations (\ref{eq:simplex}) become equivalent to the identity
\[
\alpha \circ D^{\mathit{red}}_s =0.
\]
\end{remark}

\begin{example} \label{ex:2}  When the face $F$ has dimension $0$, $1$, or $2$, the linear map $a_F$ can be viewed as a differential, a cochain map, or a homotopy operator respectively.  In these cases, we use more suggestive notations
\[
a_{i_0} = d_{i_0}, \quad a_{i_0i_1} = f_{i_0i_1}, \quad a_{i_0i_1i_2} = K_{i_0i_1i_2}.
\]
Indeed, the identity (\ref{eq:simplex}) for $m =0, 1, 2$ then translates to 
\[
\begin{array}{cr}
 d_{i_0}^2 = 0 &  \mbox{(differential)} \\
f_{i_0i_1} \circ d_{i_0} = d_{i_1} \circ f_{i_0i_1} & \mbox{(cochain map)} \\
d_{i_2} K_{i_0i_1i_2} + K_{i_0i_1i_2}d_{i_0} = f_{i_1i_2} \circ f_{i_0i_1} -f_{i_0i_2}  & \mbox{(homotopy).}
\end{array}
\]
Thus, when $m=0,1,2$ a simplex diagram with underlying graded $\mathbb{K}$-module $V^*$ consists of an assignment of 
\begin{enumerate}
\item a degree $+1$ differential $d_{i}$ to each vertex $v_i$; 
\item a chain map $f_{i_0i_1}:(V^*, d_{i_0}) \rightarrow (V^*, d_{i_1})$ to each edge $[i_0,i_1]$;
\item and to the $2$-dimensional face $[i_0,i_1,i_2]$ a chain homotopy operator $K_{i_0i_1i_2}: (V^*, d_{i_0}) \rightarrow (V^{*-1}, d_{i_2})$ realizing a chain homotopy from $f_{i_1i_2} \circ f_{i_0i_1}$ to $f_{i_0i_2}$.
\end{enumerate}
\end{example}

\begin{remark}
For faces of dimension $\geq 3$, the map $a_{i_0\ldots i_m}$ should be interpreted as an $(m-1)$-homotopy (i.e. a higher homotopy) between two $(m-2)$-homotopies built from the faces of $[i_0, \ldots, i_m]$.  For instance, for a $3$-simplex $\Delta_3 = [0,1,2,3]$, there are two $1$-homotopies $H= K_{123}f_{01}+K_{013}$ and $H'= f_{23}K_{012}+K_{023}$ between the chain maps $f_{03}$ and $f_{23}f_{12}f_{01}$.   (Geometrically, $H$ pushs the edge $[0,3]$ over the face $[0,1,3]$ and then pushes the $[1,3]$ edge over the face $[1,2,3]$, while $H'$ pushes $[0,3]$ over the other half of the boundary of $\Delta_3$.)  The equation (\ref{eq:simplex}) becomes
\[
d_3 a_{0123} - a_{0123} d_0 = H - H',
\]
i.e., $a_{0123}$ is a higher homotopy between $H$ and $H'$.
\end{remark}

\begin{definition} \label{def:sdmorph} Let $(\Delta, V, \{a_F\})$ and $(\Delta', W, \{b_F\})$ be simplex diagrams such that $\Delta \subset \Delta'$, i.e. $\Delta$ is a face of $\Delta'$. 
A {\bf morphism} from $(\Delta, V, \{a_F\})$ to $(\Delta', W, \{b_F\})$ is a degree $0$ linear map $\varphi: V^* \rightarrow W^*$ that commutes with face maps, i.e. for any face $F \subset \Delta \subset \Delta'$, we have $\varphi \circ a_F = b_F \circ \varphi$.
 There is a resulting {\bf category of simplex diagrams}, $\mathfrak{sd}$, where morphisms compose in an obvious manner.
\end{definition}

\subsection{Generalized mapping cylinders}
\label{sec:GMC}

In the later construction of sheaves, it will be useful to be able to convert from simplex diagrams, which are only commutative up to homotopy, to fully commutative diagrams.  This will be accomplished, after a subdivision of $\Delta$, via a generalization of the mapping cylinder construction that we now discuss.

\begin{definition}  Let $\mathcal{D} = (\Delta, V, \{a_F\})$ be a simplex diagram where $\dim \Delta = n$.  The {\bf mapping cylinder} of $\mathcal{D}$ is the cochain complex
\[
\mathit{Map}_n(\mathcal{D}) = \left( \bigoplus_{F \le \Delta} V_F, D \right) \quad \mbox{where $V_F = V[\dim F]$}.
\]
The notation $V[k]$ is $V$ with grading shifted down by $k$, i.e. $(V[k])^* = V^{*+k}$.  
The sum is over all faces $F \le \Delta$, and the differential satisfies 
\[
D|_{V_E} = \sum_{E \ge F}  \alpha^E_F\,,  \quad  \alpha^E_F:V_E \rightarrow V_F
\]
where for faces $E \ge F$ such that $E = [i_0,\ldots, i_m]$, $F= [j_0, \ldots, j_l]$ (with $i_0 < \ldots < i_m$ and $j_0 < \ldots < j_l$)  we have  
\begin{equation}  \label{eq:MapDiff}
\alpha^E_{F} = \left\{ \begin{array}{clr} 
  (-1)^{m+1}a_{i_0\ldots i_{m-l}}, & (i_{m-l}, \ldots, i_m) = (j_0, \ldots, j_l), & \mbox{  for some $0\leq l \leq m$}, \\
	(-1)^{m+k+1}\mathit{id}_V, & (i_0, \ldots, \widehat{i_k}, \ldots, i_m) = (j_0, \ldots, j_l), & \mbox{  for some $1 \leq k \leq m$},\\
	0, & \mbox{else.} & \end{array} \right.
\end{equation}
More succinctly, we have
\begin{align} \label{eq:MapDiff2}
D|_{V_{i_0\cdots i_m}} = &\sum_{k=0}^m (-1)^{m+1} a_{i_0 \ldots i_k}:\left(V_{i_0\ldots i_m} \rightarrow V_{i_k \ldots i_m}\right)  \\
&+ \sum_{k=1}^m(-1)^{m+k+1} \mathit{id}:\left(V_{i_0 \ldots i_m} \rightarrow V_{i_0\ldots \widehat{i_k} \ldots i_m}\right). \notag
\end{align}

\end{definition}

\begin{example}  \label{ex:1}
Let $\mathcal{D} = (\Delta_n, V, \{a_F\})$ be a simplex diagram for the standard $n$-simplex, $\Delta_n = [0,1, \ldots, n]$.  
\begin{enumerate}
\item When $n = 0$, $\mathcal{D}$ just amounts to a cochain complex, $(V^*,d_0)$, and in this case $\mathit{Map}_0(\mathcal{D}) = (V^*,d_0)$.
\item When $n = 1$, $\mathcal{D}$ specifies a chain map $f_{01}:(V^*,d_0) \rightarrow (V^*, d_1)$, and $\mathit{Map}_1(\mathcal{D})$ is the usual mapping cylinder chain complex for $f$,
\[
V_0 \oplus V_{01} \oplus V_1 = V \oplus V[1] \oplus V \quad \mbox{with} \quad D = \left( \begin{array}{ccc} -d_0 & -1 & 0 \\ 0 & d_0 & 0 \\ 0& f & -d_1 \end{array} \right) 
\]
where $1$ indicates ${id}_V$, so $D(u,v,w) = (-d_0(u) -v, d_0(v), f(v)-d_1(w))$.
\item When $n = 2$, we have $\mathit{Map}_2(\mathcal{D})  = V_0 \oplus V_1 \oplus V_2 \oplus V_{01} \oplus V_{02} \oplus V_{12} \oplus V_{012}$ with  
\[
D = \left( \begin{array}{ccccccc}
-d_0 & 0 & 0 & -1 & -1 & 0 & 0  \\
0 & -d_1 & 0 & f_{01} & 0 & -1 & 0  \\
0 & 0 &  -d_2 & 0 & f_{02} & f_{12} & -K_{012}  \\
0 & 0 & 0 & d_0 & 0 & 0 & -1  \\
0 & 0 & 0 & 0 & d_0 & 0 & 1  \\
0 & 0 & 0 & 0 & 0 & d_1 & -f_{01}  \\
0 & 0 & 0 & 0 & 0 & 0 & -d_0  
 \end{array} \right).
\]
\end{enumerate}
\end{example}

\begin{figure}
\labellist
\small
\pinlabel $d_0$ [tr] at 0 2
\pinlabel $d_1$ [tl] at 172 2
\pinlabel $d_2$ [b] at 86 136
\pinlabel $f_{01}$ [t] at 82 0
\pinlabel $f_{12}$ [b] at 128 80
\pinlabel $f_{02}$ [b] at 44 80
\pinlabel $K_{012}$ [tl] at 78 56
\pinlabel $d_0$ [tr] at 240 2
\pinlabel $d_1$ [tl] at 412 2
\pinlabel $d_2$ [b] at 326 136
\pinlabel $1$ [t] at 276 2
\pinlabel $f_{01}$ [t] at 374 2
\pinlabel $1$ [b] at 256 28
\pinlabel $f_{02}$ [br] at 300 104
\pinlabel $f_{12}$ [bl] at 346 104
\pinlabel $1$ [t] at 302 58
\pinlabel $f_{01}$ [t] at 350 58
\pinlabel $1$ [r] at 320 34
\pinlabel $1$ [l] at 394 34
\pinlabel $K_{012}$ [b] at 326 80
\pinlabel $0$ [b] at 290 30
\pinlabel $0$ [b] at 362 30
\pinlabel $d_0$ [t] at 326 2
\pinlabel $d_0$ [r] at 280 70
\pinlabel $d_1$ [l] at 372 70
\pinlabel $d_0$ [b] at 326 54

\quad

\endlabellist
\centerline{\includegraphics[scale=1]{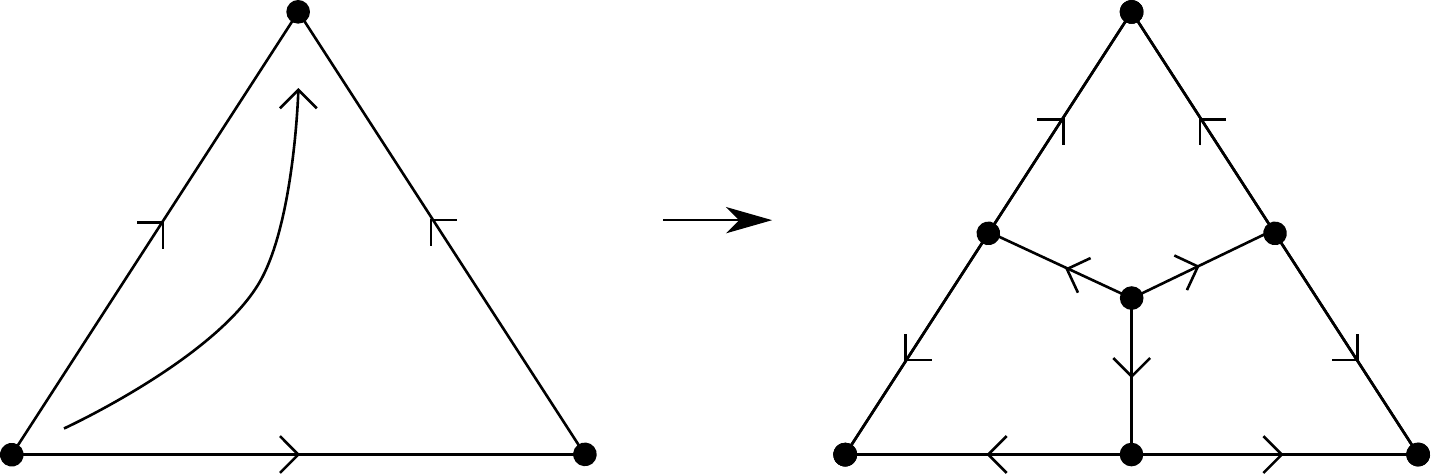}}

\quad

\caption{The face maps from a simplex diagram (left) with $\dim \Delta =2$ correspond (up to sign) to the components $\alpha^E_{F}$ as indicated. A $1$ indicates the identity map.}
\label{fig:MapDiff}
\end{figure}

\begin{remark}  There is also a geometric description of the mapping cylinder differential.  
Figure \ref{fig:MapDiff} demonstrates how components of the mapping cylinder differential arise from ``sliding'' the simplex diagram maps $\{a_F\}$ to the cells of the pair-subdivision of $\Delta$ when $n=2.$  (See Section \ref{sec:handle}.) 
 The component $\alpha^E_F:V_E \rightarrow V_F$ of $D$ 
 then appears (up to sign) as the labeling 
 of the cell $(F,E)$.
\end{remark}

\begin{proposition}
The mapping cylinder $\mathit{Map}_n(\mathcal{D})$ has $D^2 = 0.$
\end{proposition}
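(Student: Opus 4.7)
\emph{Proof plan.} The approach is a direct component-by-component computation: for each pair of faces $J \subseteq I \subseteq \Delta$, we evaluate the $V_I \to V_J$ component of $D^2$ and show it vanishes. A useful preliminary observation is that the one-step arrows $V_I \to V_K$ produced by (\ref{eq:MapDiff2}) are highly restricted: $K$ must equal $I$ itself (via $a_{i_0} = d_{i_0}$), a proper tail $(i_k, \ldots, i_m)$ for some $1 \le k \le m$ (via $a_{(i_0, \ldots, i_k)}$), or a face-deletion $I \setminus \{i_k\}$ for some $1 \le k \le m$ (via the identity). Consequently, the only $J$'s reachable from $V_I$ in two steps are $J = I$, a tail of $I$, a tail with one middle vertex deleted (subsuming the pure face-deletion $J = I \setminus \{i_l\}$), or $I$ with two middle vertices deleted.

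The case $J = I$ is immediate: the only contribution is $d_{i_0}^2$, which vanishes by the $m = 0$ instance of (\ref{eq:simplex}). The heart of the argument is the case $J = (i_k, \ldots, i_m)$ a proper tail. Here the contributing intermediate $K$'s are the tails $(i_j, \ldots, i_m)$ for $0 \le j \le k$ (with $j = 0$ meaning $K = I$ reached via $d_{i_0}$), together with the face-deletions $I \setminus \{i_l\}$ for $1 \le l \le k-1$ (contributing via an identity step followed by an $a$-map). Summing with the signs dictated by (\ref{eq:MapDiff2}) and tracking how vertex positions shift after a face-deletion, one finds that, up to a common nonzero overall sign, the $V_I \to V_J$ component of $D^2$ equals the left-hand side of (\ref{eq:simplex}) applied to the sub-face $(i_0, \ldots, i_k)$, and therefore vanishes. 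When $J$ contains middle deletions the contributions instead come in cancelling pairs: for $J = (i_k, \ldots, \widehat{i_l}, \ldots, i_m)$ the paths through $K = (i_k, \ldots, i_m)$ and $K = I \setminus \{i_l\}$ produce the same composed map $a_{(i_0, \ldots, i_k)}$ with opposite signs $(-1)^l$ and $(-1)^{l+1}$ (with an analogous cancellation of $d_{i_0}$-contributions in the degenerate $k = 0$ subcase), and for $J = I \setminus \{i_l, i_{l'}\}$ the two identity-only paths cancel by the standard simplicial boundary sign relation $(-1)^{k+l} + (-1)^{k+l+1} = 0$.

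The principal obstacle is the precise sign bookkeeping, especially tracking position shifts after face-deletions and matching the signs of (\ref{eq:MapDiff2}) against those of (\ref{eq:simplex}) after reindexing. A cleaner alternative would pass through Remark \ref{rem:Simp2}: a simplex diagram is equivalent to a DGA homomorphism $\alpha:(\Omega C_*(\Delta), D_s^{\mathit{red}}) \to (\mathit{End}(V^\vee), 0)$, and $\mathit{Map}_n(\mathcal{D})$ can be recognized as a twisted complex attached to $\alpha$, whereupon $D^2 = 0$ becomes a formal consequence of $\alpha$ being a DGA map together with $(D_s^{\mathit{red}})^2 = 0$.
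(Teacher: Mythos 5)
Your case decomposition matches the paper's exactly (up to renaming: their Case 1 = your ``$J$ a tail'' with $J=I$ being the $l=0$ instance, Case 2 = tail with one middle deletion subsuming pure face-deletion, Case 3 = two middle deletions), and the cancellation mechanisms you identify---reducing to identity~(\ref{eq:simplex}) on a sub-face for tails, pairwise sign cancellations otherwise---are precisely what the paper carries out; this is essentially the same proof, stated as a plan with the routine sign verification deferred. The alternative route you sketch via Remark~\ref{rem:Simp2} and a twisted-complex interpretation of $\mathit{Map}_n(\mathcal{D})$ is plausible and would be cleaner, but the paper does not take it and you would still need to verify that the Maurer--Cartan element implicit in the mapping-cylinder differential actually corresponds to the DGA map $\alpha$ under that identification.
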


\begin{proof}
Let $\langle f V_i, V_j \rangle$ denote the $j$-th component of the linear map $f: \oplus_k V_k \rightarrow \oplus_k V_k$ restricted to the factor $V_i.$  From (\ref{eq:MapDiff2}) we see that there are only three cases where the components of $D^2$ can have non-zero terms.

\medskip

\noindent{\bf Case 1:}  For $ 0 \leq l \leq m$,
\begin{eqnarray*}
\langle D^2 V_{i_0 \ldots i_m}, V_{i_l \ldots i_m} \rangle & =& 
\sum_{k=0}^l (-1)^{m+1}  \langle D V_{i_k \ldots i_m}, V_{i_l \ldots i_m} \rangle \circ a_{i_0 \ldots i_k}
+ \sum_{k=1}^{l-1} (-1)^{m+k+1}\langle D V_{i_0 \ldots \hat{i}_k \ldots  i_m}, V_{i_l \ldots i_m} \rangle \circ   id   \\
& =& \sum_{k=0}^l (-1)^{m+1+m-k+1} a_{i_k \ldots i_l} \circ a_{i_0 \ldots i_k} 
+ \sum_{k=1}^{l-1} (-1)^{m+k+1+m-1+1} a_{i_0 \ldots \hat{i}_k \ldots i_l}
\end{eqnarray*}
which vanishes by (\ref{eq:simplex}).

\medskip

\noindent{\bf Case 2:} For $0 \leq l < k \leq m$,
\begin{align*}
\langle D^2 V_{i_0 \ldots i_m}, V_{i_l \ldots \hat{i}_k \ldots i_m} \rangle & = (-1)^{m+1} \langle D V_{i_l\ldots i_m}, V_{i_l\ldots \hat{i}_k \ldots i_m} \rangle \circ a_{i_0\ldots i_l} + (-1)^{m+k+1} \langle D V_{i_0\ldots \hat{i}_k \ldots i_m}, V_{i_l\ldots \hat{i}_k \ldots i_m} \rangle \circ \mathit{id} \\
 &= (-1)^{m+1+(m-l)+(k-l)+1} a_{i_0\ldots i_l} + (-1)^{m+k+1+m} a_{i_0\ldots i_l} = 0.
\end{align*}

\medskip

\noindent{\bf Case 3:} For $0 <k <l \leq m$,
\begin{align*}
\langle D^2 V_{i_0 \ldots i_m}, V_{i_0 \ldots \hat{i}_k \ldots \hat{i}_l \ldots i_m} \rangle & = (-1)^{m+k+1} \langle D V_{i_0\ldots\hat{i}_k \ldots i_m}, V_{i_0\ldots \hat{i}_k \ldots \hat{i}_l \ldots i_m} \rangle \circ \mathit{id}  \\
& \quad \quad + (-1)^{m+l+1} \langle D V_{i_0\ldots\hat{i}_l \ldots i_m}, V_{i_0\ldots \hat{i}_k \ldots \hat{i}_l \ldots i_m} \rangle \circ \mathit{id} \\
 &= (-1)^{m+k+1+(m-1)+(l-1)+1} \mathit{id} + (-1)^{m+l+1+(m-1)+k+1} \mathit{id} = 0.
\end{align*}

\end{proof}

\begin{proposition} \label{prop:MappingFun}  
The mapping cylinder construction gives a functor $\mathit{Map}:\mathfrak{sd} \rightarrow \mathit{Ch}(\mbox{$\mathbb{K}$-mod})$ from the category of simplex diagrams to the category of cochain complexes of $\mathbb{K}$-modules with cochain maps.
\end{proposition}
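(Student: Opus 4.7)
The plan is to define the functor on morphisms and then verify the chain map property together with the two functoriality axioms. Given a morphism $\varphi:(\Delta,V,\{a_F\}) \to (\Delta',W,\{b_F\})$ in $\mathfrak{sd}$, since $\Delta$ is a face of $\Delta'$ every face $F \leq \Delta$ is automatically a face of $\Delta'$, so the summand $W_F$ appears in $\mathit{Map}_{n'}(\mathcal{D}')$. I would define
\[
\mathit{Map}(\varphi):\mathit{Map}_n(\mathcal{D}) \to \mathit{Map}_{n'}(\mathcal{D}')
\]
to act on each summand $V_F$ by the composite $V_F \xrightarrow{\varphi} W_F \hookrightarrow \mathit{Map}_{n'}(\mathcal{D}')$, and to vanish on summands not of this form (there are none in the domain). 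Since $\varphi$ has degree zero it respects the grading shift $[\dim F]$, so $\mathit{Map}(\varphi)$ is a degree zero linear map.

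The main check is that $\mathit{Map}(\varphi)$ commutes with the mapping cylinder differentials. Using formula (\ref{eq:MapDiff2}), for $F=[i_0,\ldots,i_m] \leq \Delta$ both $D_W \circ \mathit{Map}(\varphi)|_{V_F}$ and $\mathit{Map}(\varphi) \circ D_V|_{V_F}$ decompose into ``face-map'' contributions landing in $W_{[i_k\ldots i_m]}$ and ``identity-map'' contributions landing in $W_{[i_0\ldots\widehat{i_k}\ldots i_m]}$, and in no other summands. The identity-map contributions match on the nose because $\mathit{Map}(\varphi)$ acts as $\varphi$ on every summand and the signs are identical on the two sides. The face-map contributions agree by the morphism condition $\varphi \circ a_{i_0\ldots i_k} = b_{i_0\ldots i_k} \circ \varphi$ from Definition \ref{def:sdmorph}, applied to the face $[i_0,\ldots,i_k]\leq \Delta$. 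The key observation making this work is that the formula (\ref{eq:MapDiff2}) for the differential on a summand $V_F$ (resp.\ $W_F$) depends only on the combinatorics of $F$ and its subfaces, not on how $F$ sits inside the ambient simplex; in particular, no ``extra'' terms appear in $D_W|_{W_F}$ that are absent from $D_V|_{V_F}$.

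Finally, functoriality is immediate from the definition: $\mathit{Map}(\mathit{id}_V)$ restricts to the identity on each summand $V_F$, and for composable morphisms $\varphi,\psi$ the map $\mathit{Map}(\psi \circ \varphi)$ restricts on each summand $V_F$ to $\psi \circ \varphi$, which is the same as the restriction of $\mathit{Map}(\psi) \circ \mathit{Map}(\varphi)$. I do not anticipate any serious obstacle; the only mildly delicate point is the sign bookkeeping in the chain map verification, but this is entirely controlled by reading off (\ref{eq:MapDiff2}) and applying the face-map compatibility.
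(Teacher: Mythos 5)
Your proposal is correct and takes essentially the same route as the paper: the paper also defines $\mathit{Map}(\varphi)$ as $\varphi$ applied componentwise on each summand $V_F$ followed by the inclusion $\oplus_{F\le\Delta}W_F \hookrightarrow \oplus_{F\le\Delta'}W_F$, and asserts this is a chain map with functoriality being immediate. Your additional verification that the differential formula (\ref{eq:MapDiff2}) is intrinsic to the subface structure of $F$ — so that identity-component terms match by sign and face-map terms match via $\varphi\circ a_{i_0\ldots i_k}=b_{i_0\ldots i_k}\circ\varphi$ — correctly fills in the details the paper leaves implicit.
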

\begin{proof}  When $\varphi: \mathcal{D} \rightarrow \mathcal{D}'$ is a morphism of simplex diagrams where $\mathcal{D} = (\Delta, V, \{a_F\})$ and $\mathcal{D}' = (\Delta', W, \{b_F\})$, since $\Delta \subset \Delta'$ (see Definition \ref{def:sdmorph}) we can apply $\varphi$ component by component and then compose with the inclusion to obtain a chain map
\[
\mathit{Map}_m(\mathcal{D}) = \oplus_{F \leq \Delta} V_F \stackrel{\varphi}{\rightarrow} \oplus_{F \leq \Delta} W_F \hookrightarrow \oplus_{F \leq \Delta'} W_F = \mathit{Map}_n(\mathcal{D}'). 
\]
Clearly, compositions and identity morphisms are preserved.
\end{proof}

\begin{proposition}  \label{prop:vertex}  
Let $\mathcal{D}=(\Delta, V, \{a_F\})$ be a simplex diagram with the property that face maps associated to edges are quasi-isomorphisms, i.e. for any $F = [i_0,i_1] \subset \Delta$ with $\dim F =1$ the cochain map
\[
a_{i_0i_1}:(V^*, a_{i_0}) \rightarrow (V^*, a_{i_1})
\]
is a quasi-isomorphism.  Moreover, let $v_j \subset \Delta$ be a vertex and $\mathcal{D}|_{v_j}=(v_j, V, \{a_{v_j}\})$ be the simplex diagram obtained from restricting $\mathcal{D}$ to $v_j$. 
Then, the map of simplex diagrams $\mathcal{D}|_{v_j} \rightarrow \mathcal{D}$ arising from $v_j \subset \Delta$ and $\mathit{id}:V^* \rightarrow V^*$  
induces a quasi-isomorphism of mapping cylinders, 
\[
\mathit{Map}_0(\mathcal{D}|_{v_j}) \stackrel{\cong}{\hookrightarrow} \mathit{Map}_n(\mathcal{D}).
\]
\end{proposition}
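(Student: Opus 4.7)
The plan is to prove the proposition in two steps: first the special case $v_j = v_n$ (the last vertex of $\Delta$), which I will establish \emph{without} needing the edge-map hypothesis, and then the general case via an explicit chain homotopy relating $\iota_j: V_j \hookrightarrow \mathit{Map}_n(\mathcal{D})$ to $\iota_n \circ f_{jn}$ for any $j < n$.

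For the case $v_j = v_n$, formula (\ref{eq:MapDiff2}) gives $D|_{V_n} = -d_n$, so $V_n$ is a subcomplex, and it suffices to prove that the quotient $Q := \mathit{Map}_n(\mathcal{D})/V_n$ is acyclic. I decompose $Q = Q_0 \oplus Q_1$ as graded submodules, where $Q_0 = \bigoplus_{F \not\ni v_n,\, F \neq \emptyset} V_F$ and $Q_1 = \bigoplus_{F \ni v_n,\, F \neq [n]} V_F$. Inspecting (\ref{eq:MapDiff2}) shows that $D$ preserves $Q_0$, and moreover $Q_0$ coincides with $\mathit{Map}_{n-1}(\mathcal{D}|_\sigma)$, where $\sigma \subset \Delta$ is the face opposite $v_n$; this yields a short exact sequence $0 \to Q_0 \to Q \to Q_1 \to 0$. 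The tautological identification $V_{[G, v_n]} \leftrightarrow V_G$ on underlying $\mathbb{K}$-modules for each nonempty face $G \leq \sigma$ gives an isomorphism of cochain complexes $Q_1 \cong Q_0[1]$—a direct sign comparison using (\ref{eq:MapDiff2}) shows the induced differentials agree up to an overall factor of $-1$. The connecting homomorphism $\delta: H^k(Q_1) \to H^{k+1}(Q_0)$ can be computed explicitly: for a cocycle $x \in V_{[G, v_n]}$ with $p = \dim G + 1$, the only component of $Dx$ landing in $Q_0$ comes from the $\mathit{id}$ term removing $v_n$, with coefficient $(-1)^{p+p+1} = -1$. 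Thus $\delta$ identifies with $-\mathit{id}$ under $Q_1 \cong Q_0[1]$, hence is an isomorphism, and the long exact sequence forces $H^\ast(Q) = 0$.

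For a general vertex $v_j$ with $j < n$, define $h: V_j \to \mathit{Map}_n(\mathcal{D})$ to be the degree $-1$ map sending $u \in V_j$ to $u \in V_{[j,n]}$. Using (\ref{eq:MapDiff2}), the differential on $V_{[j,n]} \subset \mathit{Map}_n(\mathcal{D})$ has three components $d_j v \in V_{[j,n]}$, $f_{jn} v \in V_n$, and $-v \in V_j$, while the differential on $V_j$ is $-d_j$; a direct computation then yields
\[
(Dh + hD)(u) = \iota_n f_{jn}(u) - \iota_j(u),
\]
exhibiting $\iota_j \simeq \iota_n \circ f_{jn}$ as chain maps $V_j \to \mathit{Map}_n(\mathcal{D})$. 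Since $\iota_n$ is a quasi-isomorphism by the first step and $f_{jn}$ is a quasi-isomorphism by hypothesis, $\iota_n \circ f_{jn}$ is a quasi-isomorphism, and therefore so is the homotopic chain map $\iota_j$.

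The main obstacle I anticipate is the sign bookkeeping in the first step, specifically the identification $Q_1 \cong Q_0[1]$ and the verification that $\delta$ is (up to sign) the identity. The structural content is transparent—$Q$ is effectively the mapping cone of an isomorphism—but extracting it correctly from the sign conventions in (\ref{eq:MapDiff2}) requires care.
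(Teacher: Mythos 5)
Your proof is correct, and it takes a genuinely different route from the paper's. The paper proves acyclicity of the quotient $Q = \mathit{Map}_n(\mathcal{D})/V_{v_j}$ all at once: it filters $Q$ by $\dim F$ over faces $F$ not containing $v_j$, observes that each associated graded piece is a direct sum of cones of the maps $\alpha^{\varphi(F)}_F$ (which are $\pm\mathit{id}$ when $v_j$ is not the minimal vertex of $\varphi(F)$ and $\pm a_{j\,j_0}$ otherwise), and invokes the edge-map hypothesis so that every cone is acyclic, whence the spectral sequence degenerates. Your two-step argument decouples the two mechanisms that the paper's filtration treats together: for $v_j = v_n$ the relevant $\alpha^{\varphi(F)}_F$ are \emph{always} identities (since $v_n$ is never the minimal vertex of a positive-dimensional face), and you exploit this by producing a single short exact sequence $0 \to Q_0 \to Q \to Q_1 \to 0$ with $Q_1 \cong Q_0[1]$ and connecting map $\pm\mathit{id}$, needing no quasi-isomorphism hypothesis and no spectral sequence; the general case then reduces to this one via the explicit degree $-1$ homotopy $h(u) = u \in V_{[j,n]}$, whose computation $(Dh+hD) = \iota_n f_{jn} - \iota_j$ I verified against (\ref{eq:MapDiff2}) and which is correct. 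One small bonus of your route: it shows that only the single face map $a_{jn}$ along the edge $[j,n]$ needs to be a quasi-isomorphism, whereas the paper's argument as written uses the hypothesis on every edge of the form $[j, j_0]$ with $j < j_0$. Both proofs are valid; yours is arguably more elementary and makes the role of the hypothesis more transparent.
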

\begin{proof}
This follows since the quotient $Q = \mathit{Map}_n(\mathcal{D})/\mathit{Map}_0(\mathcal{D}|_{v_j})$ 
 is acyclic.  To verify this statement, let $X$ denote the set of faces of $\Delta$ other than the vertex $v_j$, so that as a vector space,
\[
Q= \oplus_{F \in X} V_F.
\]
Write $X = X_0 \sqcup X_1$ where $X_0$ (resp. $X_1$) consists of those faces in $X$ that do not have (resp. do have) $v_j$ as a vertex.  We have a bijection $\varphi:X_0 \rightarrow X_1$ where $\varphi(F)$ is the face whose vertices consist of $v_j$ together with the vertices of $F$.  There is a filtration of $Q$ by subcomplexes
\[
\{0\} = Q_{-1} \subset Q_{0} \subset \cdots \subset Q_{n-1} = Q
\]
where 
\[
Q_k = \bigoplus_{\stackrel{F \in X_{0}}{\dim F \leq k}} 
\left( V_{F} \oplus V_{\varphi(F)}\right).
\]
The corresponding spectral sequence vanishes at the $E_1$ page since we have a direct sum of complexes
\[
\frac{Q_k}{Q_{k-1}} = \bigoplus_{\stackrel{F \in X_{0}}{\dim F = k}}
 \mathit{Cone}(\alpha^{\varphi(F)}_F: V_{\varphi(F)}[-1] \rightarrow V_{F})
\]
(where for any face $F =[i_0, \ldots, i_m]$,  $V_F$ is a complex with differential $(-1)^{\dim F+1} a_{i_0}$), and the cones are all acyclic since each $\alpha^{\varphi(F)}_F$ is a quasi-isomorpism.  [Examining (\ref{eq:MapDiff}), since $\dim \varphi(F) = \dim F +1$, each $\alpha^{\varphi(F)}_F$ either has the form $\pm a_{i_0i_1}$ or $\pm \mathit{id}_V$.]

\end{proof}

\begin{corollary} \label{cor:vertex}  Let $\mathcal{D}=(\Delta, V, \{a_F\})$ and $\mathcal{D}'= (\Delta', W, \{b_F\})$ be simplex diagrams with $\Delta \subset \Delta'$ where $\dim \Delta = m$ and $\dim \Delta'=n$, and let $v_i \subset \Delta$ be a vertex, and assume $\mathcal{D}$ and $\mathcal{D}'$ are such that all face maps associated to edges are quasi-isomorphism.  Suppose we have a morphism of simplex diagrams $\varphi:\mathcal{D} \rightarrow \mathcal{D}'$ such that the $\varphi:(V^*, a_{v_i}) \rightarrow (W^*, b_{v_i})$ is a quasi-isomorphism of chain complexes.  Then, $\varphi$ induces a quasi-isomorphisms
\[
\mathit{Map}_m(\mathcal{D}) \stackrel{\cong}{\rightarrow} \mathit{Map}_n(\mathcal{D}').
\] 
\end{corollary}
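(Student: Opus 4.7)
The plan is to reduce this corollary to Proposition \ref{prop:vertex} combined with the functoriality of the mapping cylinder construction (Proposition \ref{prop:MappingFun}) and the standard two-out-of-three property for quasi-isomorphisms.

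First I would apply Proposition \ref{prop:vertex} separately to $\mathcal{D}$ at $v_i \subset \Delta$ and to $\mathcal{D}'$ at $v_i \subset \Delta'$ (noting that $v_i$ is a vertex of $\Delta'$ since $\Delta \subset \Delta'$). Because both simplex diagrams have all edge face maps being quasi-isomorphisms, Proposition \ref{prop:vertex} yields quasi-isomorphisms
\[
\mathit{Map}_0(\mathcal{D}|_{v_i}) \stackrel{\cong}{\hookrightarrow} \mathit{Map}_m(\mathcal{D}), \qquad \mathit{Map}_0(\mathcal{D}'|_{v_i}) \stackrel{\cong}{\hookrightarrow} \mathit{Map}_n(\mathcal{D}').
\]

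Next, I would observe that $\varphi: \mathcal{D} \to \mathcal{D}'$ restricts to a morphism of simplex diagrams $\varphi|_{v_i}: \mathcal{D}|_{v_i} \to \mathcal{D}'|_{v_i}$ (which by Example \ref{ex:1} (1) is just the chain map $\varphi : (V^*, a_{v_i}) \to (W^*, b_{v_i})$), and by hypothesis this is a quasi-isomorphism. Applying the mapping cylinder functor of Proposition \ref{prop:MappingFun} to the commutative diagram in $\mathfrak{sd}$
\[
\xymatrix{
\mathcal{D}|_{v_i} \ar[r] \ar[d]_{\varphi|_{v_i}} & \mathcal{D} \ar[d]^{\varphi} \\
\mathcal{D}'|_{v_i} \ar[r] & \mathcal{D}'
}
\]
yields a commutative square of cochain complexes
\[
\xymatrix{
\mathit{Map}_0(\mathcal{D}|_{v_i}) \ar[r]^-{\cong} \ar[d]_{\varphi|_{v_i}}^{\cong} & \mathit{Map}_m(\mathcal{D}) \ar[d]^{\varphi} \\
\mathit{Map}_0(\mathcal{D}'|_{v_i}) \ar[r]^-{\cong} & \mathit{Map}_n(\mathcal{D}')
}
\]
in which the two horizontal arrows are quasi-isomorphisms by the previous paragraph and the left vertical arrow is a quasi-isomorphism by hypothesis. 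Two-out-of-three then forces the right vertical arrow $\varphi: \mathit{Map}_m(\mathcal{D}) \to \mathit{Map}_n(\mathcal{D}')$ to be a quasi-isomorphism, which is the claim.

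There is essentially no obstacle here beyond the bookkeeping needed to check that the square commutes, which is immediate from the definition of morphisms in $\mathfrak{sd}$ (Definition \ref{def:sdmorph}) and the component-wise description of the induced map on mapping cylinders given in the proof of Proposition \ref{prop:MappingFun}. The real content of the statement lies entirely in Proposition \ref{prop:vertex}; this corollary is merely the observation that since each mapping cylinder retracts onto the stalk at any vertex (once edge maps are quasi-isomorphisms), it suffices to know $\varphi$ is a quasi-isomorphism at a single vertex.
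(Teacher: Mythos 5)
Your proof is correct and follows essentially the same route as the paper's: retract both mapping cylinders onto the stalk at $v_i$ via Proposition \ref{prop:vertex}, then invoke two-out-of-three in the resulting commutative square. Your write-up is if anything slightly more careful, in that you correctly attribute the bottom arrow's being a quasi-isomorphism to the hypothesis on $\varphi$ at $v_i$ rather than to Proposition \ref{prop:vertex}.
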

\begin{proof}  We have a commutative diagram of simplex diagrams
\[
\xymatrix{ 
\mathcal{D} \ar[r]^\varphi &  \mathcal{D}' \\
\mathcal{D}|_{v_i} \ar[r]^\varphi \ar[u] & \mathcal{D}'|_{v_i} \ar[u] 
}. 
\]
By Proposition \ref{prop:vertex}, when $\mathit{Map}$ is applied the left, right and bottom arrows all become quasi-isomorphisms.  Thus, the top arrow must also become a quasi-isomorphism.
\end{proof}

\section{Chain homotopy diagrams and augmentations} \label{sec:CHD}

In \cite{RuSu3} augmentations to $\Z/2$ of the cellular DGA of a Legendrian surface $\Lambda$ with compatible polygonal decomposition $\mathcal{E}$ were shown to be in bijection with certain algebraic structures called chain homotopy diagrams (abbrv. CHDs).  In Section \ref{sec:vspace}, using the language of simplex diagrams, we review the definition of CHD and extend it to allow for a general coefficient field $\mathbb{K}$ and Legendrians (with mild front singularities) of arbitrary dimensions.  In Section \ref{sec:CHDaug}, we establish the bijection between CHDs and augmentations of the simplicial DGA from Section \ref{sec:SimplicialReformulation}.

Throughout this section, $\dim(\Lambda)=n$ does not need to be 2.

\subsection{Vector spaces associated to $\mathcal{E}$}   \label{sec:vspace}
We return now to the setting of Section \ref{sec:SimplicialAlgebra} where $(\Lambda, \mu, \mathcal{E})$ is a closed Legendrian $\Lambda\subset J^1M$ (of arbitrary dimension) 
  equipped with a $\Z$-valued 
 Maslov potential, $\mu$, and a compatible simplicial decomposition, $\mathcal{E}$, of $M$. 
  As before, a cell in $\mathcal{E}$ that is the interior of the simplex with vertices $e_{i_0}, \ldots, e_{i_m}$ having $i_0<\ldots < i_m$ is denoted $e_I =e_{i_0\ldots i_m}$ where $I = \{i_0, \ldots, i_m\}$.  
Given $e_I \in \mathcal{E}$, recall that $\Lambda(e_I) = \{ S^I_i\}$ is the set of sheets of $\Lambda$ above $e_I$, and  by definition  these are the components of $\pi_{x}^{-1}(e_I) \cap (\Lambda \setminus \Lambda_{\mathit{cusp}})$ where $\Lambda_{\mathit{cusp}}$ is the set of cusp points of $\Lambda$.   It is important that the sheets in $\Lambda(e_I)$ are considered as subsets of $\Lambda$, not of $\pi_{xz}(\Lambda)$, eg. a triple point of $\pi_{xz}(\Lambda)$ above a vertex $e_{i_0}$ corresponds to three sheets in $\Lambda(e_{i_0})$.

To each cell $e_I \in \mathcal{E}$, we associate a graded vector space 
\[
V(e_I) = \mbox{Span}_{\mathbb{K}}\Lambda(e_I) = \mbox{Span}_{\mathbb{K}} \{S^I_i\}. 
\]
spanned by the sheets of $\Lambda$ above $e_I$.  The $\Z$-grading on $V(e_I)$ is given by
\[
|S_i^I| = -\mu(S_i^I).
\]
As in Property \ref{ob:closure}, if $e_I \leq e_J$, then each sheet in $\Lambda(e_I)$ belongs to the closure of a unique sheet in $\Lambda(e_J)$. This gives an inclusion
\begin{equation} \label{eq:iotaDef}
\iota=\iota(e_I \rightarrow e_J): \Lambda(e_I) \rightarrow \Lambda(e_J), \quad  \iota(S^{I}_{i}) = S^{J}_{j} \quad \Leftrightarrow \quad S^I_i \subset \overline{S^{J}_j},
\end{equation}
and leads to a direct sum decomposition
\begin{equation} \label{eq:directsum}
V(e_J) = \iota\big(V(e_I)\big) \oplus V^{I,J}_{\mathit{cusp}}
\end{equation}
where $V^{I,J}_{\mathit{cusp}}$ is the (possibly empty) span of those sheets in $\Lambda(e_J)$ that meet in pairs at cusp components above $e_I$.  Denote the projection to the first component as
\begin{equation} \label{eq:projection}
p= p(e_J \rightarrow e_I): V(e_J) \rightarrow V(e_I).
\end{equation}

Each of the collections of sheets  $\Lambda(e_I)$ is partially ordered by comparing $z$-coordinates (pointwise) in descending order.  More precisely, we declare
\[
  S^I_i \prec S^I_j \quad \mbox{iff}  \quad  z(S_i) > z(S_j).
\] 
As in Property \ref{ob:crossing}, 
sheets that are not comparable must cross one another in $\pi_{xz}(\Lambda)$ so that their front projections coincide above $e_I$.
 A linear map $c:V(e_I) \rightarrow V(e_I)$
  is {\bf lower triangular} (resp. {\bf strictly lower triangular}) if 
\[
\forall\, S^I_i \in \Lambda(e_I), \quad
c(S^I_i) \in \mbox{Span}\{S^I_j\,|\, S^I_i \preceq S^I_j \},  \quad \left(\mbox{resp. } c(S^I_i) \in \mbox{Span}\{S^I_j\,|\, S^I_i \prec S^I_j \} \right).
\]

\subsubsection{Boundary extensions}  
Suppose that $I \subset J$ so that $\overline{e_I}$ is a face of $\overline{e_J}$, and that we are given a map $c(e_I):V(e_I) \rightarrow V(e_I).$
We define the {\bf boundary extension} of $c_I$ from $V(e_I)$ to $V(e_J)$ to be the  
 linear map $c_J(e_I):V(e_J) \rightarrow V(e_J)$ obtained by using (\ref{eq:directsum}) to view $V(e_J) \cong V(e_I) \oplus V^{I,J}_{\mathit{cusp}}$ and putting 
\begin{equation} \label{eq:boundary}
c_J(e_I)\big\vert_{V(e_I)} = c(e_I), \quad c_J(e_I)\big\vert_{V^{I,J}_{\mathit{cusp}}} = \left\{ \begin{array}{cr} d_0, & \mbox{if} \,\, \dim e_I = 0, \\ \mathit{id}, &  \mbox{if}\,\, \dim e_I = 1,  \\
0, & \mbox{if}\,\, \dim e_I \geq 2, \end{array} \right.
\end{equation}
where the map $d_0:V^{I,J}_{\mathit{cusp}} \rightarrow V^{I,J}_{\mathit{cusp}}$ is defined to have $d_0S_a = S_b$, and $d_0 S_b=0$  whenever $S_a,S_b \in \Lambda(e_J)$ meet one another at a cusp edge above $e_I$ and satisfy $S_a \prec S_b$ so that  $S_a$ (resp. $S_b$) is the upper (resp. lower) sheet of the cusp edge.

\begin{lemma} \label{lem:boundary}
When $e_F \leq e_{I} \leq e_{J}$ and $c(e_F):V(e_F) \rightarrow V(e_F)$, the boundary extensions satisfy  
\[
c_I(e_F) \circ p(e_{J} \rightarrow e_{I}) = p(e_{J} \rightarrow e_{I}) \circ c_J(e_F).
\]
\end{lemma}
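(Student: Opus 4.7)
The plan is to evaluate both sides on the basis of $V(e_J)$ given by the sheets $\Lambda(e_J)$, after organizing $V(e_J)$ into a refined three-part decomposition. Combining the two instances of (\ref{eq:directsum}) (for $e_F \le e_I$ and for $e_I \le e_J$) with the transitivity $\iota(e_F \to e_J) = \iota(e_I \to e_J) \circ \iota(e_F \to e_I)$, one obtains
\[
V(e_J) \;=\; \iota(e_F \to e_J)\bigl(V(e_F)\bigr) \,\oplus\, \iota(e_I \to e_J)\bigl(V^{F,I}_{\mathit{cusp}}\bigr) \,\oplus\, V^{I,J}_{\mathit{cusp}},
\]
and $p(e_J \to e_I)$ restricts to the identification of the first two summands with $V(e_I) = \iota(e_F \to e_I)(V(e_F)) \oplus V^{F,I}_{\mathit{cusp}}$ while annihilating the third.

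On the first summand, both $c_I(e_F)$ and $c_J(e_F)$ act by pushing $c(e_F)$ forward along the respective $\iota$ by (\ref{eq:boundary}), so the identity reduces to transitivity of $\iota$ and $p \circ \iota = \mathit{id}$. On the second summand, the key input is that $\iota(e_I \to e_J)$ preserves $e_F$-cusp partnership: if $S^I$ and $S^I_\star$ in $V^{F,I}_{\mathit{cusp}}$ meet at a cusp component $C$ above $e_F$, then $C$ lies in $\overline{S^I} \cap \overline{S^I_\star}$, and hence also in the closures of their $\iota(e_I \to e_J)$-images, so these images remain cusp partners above $e_F$ as sheets in $\Lambda(e_J)$. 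Consequently $c_J(e_F)$ on $\iota(e_I \to e_J)(V^{F,I}_{\mathit{cusp}})$ is given by the same formula ($d_0$, $\mathit{id}$, or $0$ according to $\dim e_F$) as $c_I(e_F)$ on $V^{F,I}_{\mathit{cusp}}$, and the identity again follows from $p \circ \iota = \mathit{id}$.

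The main obstacle is the third summand, where the left-hand side vanishes and one must show that $c_J(e_F)$ preserves $V^{I,J}_{\mathit{cusp}}$. For $S^J \in V^{I,J}_{\mathit{cusp}}$, since $\iota(e_F \to e_J)(V(e_F)) \subset \iota(e_I \to e_J)(V(e_I))$ we automatically have $S^J \in V^{F,J}_{\mathit{cusp}}$, so by (\ref{eq:boundary}) it suffices to show that the $e_F$-cusp partner $S^J_\diamond$ of $S^J$ lies in $V^{I,J}_{\mathit{cusp}}$. I would argue by contradiction: suppose $S^J_\diamond = \iota(e_I \to e_J)(S^I_\diamond)$ for some $S^I_\diamond \in V(e_I)$. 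If $S^I_\diamond \in \iota(e_F \to e_I)(V(e_F))$, then $S^J_\diamond \in \iota(e_F \to e_J)(V(e_F))$, contradicting $S^J_\diamond \in V^{F,J}_{\mathit{cusp}}$; while if $S^I_\diamond \in V^{F,I}_{\mathit{cusp}}$, then the partnership-preservation observation applied to $S^J_\diamond$ identifies its $e_F$-cusp partner with the $\iota(e_I \to e_J)$-image of the $e_F$-partner of $S^I_\diamond$, forcing $S^J \in \iota(e_I \to e_J)(V(e_I))$ and contradicting $S^J \in V^{I,J}_{\mathit{cusp}}$. Thus $S^J_\diamond \in \ker p(e_J \to e_I)$, completing the verification.
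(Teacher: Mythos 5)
Your proof is correct and carefully fills in what the paper dismisses with the single line ``This is immediate from the definitions'': the three-part refinement of the decomposition (\ref{eq:directsum}), the observation that $\iota(e_I \to e_J)$ preserves cusp-partnership over $e_F$, and the contradiction argument showing $c_J(e_F)$ preserves $V^{I,J}_{\mathit{cusp}} = \ker p(e_J \to e_I)$ are all sound and are exactly the computations one performs when unwinding the definitions. The approach is the same as the paper's; you have simply made explicit what the authors treat as immediate.
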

\begin{proof}
This is immediate from the definitions.
\end{proof}

\begin{remark} \label{remark:boundary}
\begin{enumerate}
\item  When $e_I = e_J$, we have $c_I(e_I) = c(e_I)$.
\item The boundary extension of a strictly lower triangular map with $\dim e_I \neq 1$ is strictly lower triangular, since it is the direct sum of two strictly lower triangular maps.  The boundary extension of an invertible lower triangular map with $\dim e_I = 1$ is  invertible lower triangular. 
\end{enumerate}
\end{remark}

\subsection{Chain Homotopy Diagrams and augmentations of the simplicial DGA}  \label{sec:CHDaug}

\begin{definition} \label{def:CHD}
A {\bf Chain Homotopy Diagram (CHD)} for $(\Lambda, \mu, \mathcal{E})$, denoted $\mathcal{C} = \{c(e_I)\}$,  is an assignment of a map $c(e_I):V(e_I) \rightarrow V(e_I)$ for each simplex $e_I \in \mathcal{E}$ satisfying:

\begin{itemize} 
\item Each $c(e_I)$ has degree $1-\dim e_I$.
\item For each $e_I$ with $\dim e_I \neq 1$ (resp. with $\dim e_I =1$)  $c(e_I)$ is strictly lower triangular (resp. $c(e_I) - \mathit{id}$ is strictly lower triangular). 

\item For each $e_I$, the collection 
\[
\left(\overline{e_I}, V(e_I), \{c_I(e_F)\}_{F \subset I} \right)
\]
is a simplex diagram (where $c_I(e_F)$ denotes the boundary extension of $c(e_F)$ from $V(e_F)$ to $V(e_I)$).
\end{itemize}
\end{definition}

\begin{remark}
Note that when $\dim e_F =1$, $c(e_F)$ is invertible since it is the sum of a strictly lower triangular map and the identity map.  As in Remark \ref{remark:boundary}, all of the boundary extensions, $c_I(e_F)$, will also be invertible lower triangular.  In particular, Proposition \ref{prop:vertex} and Corollary \ref{cor:vertex} apply to the simplex diagrams $\left(\overline{e_I}, V(e_I), \{c_I(e_F)\}_{F \subset I} \right)$.
\end{remark}

\begin{example}
In view of Example \ref{ex:2},
 when $\Lambda$ is $2$-dimensional a CHD amounts to an assignment of  
\begin{itemize}
\item for each $0$-cell a strictly lower triangular differential $d_{i_0}: V(e_{i_0}) \rightarrow V(e_{i_0})$;
\item for each $1$-cell a chain isomorphism $f_{i_0i_1}:\left(V(e_{i_0i_1}), \widehat{d}_{0}\right) \rightarrow \left(V(e_{i_0i_1}), \widehat{d}_1\right)$ such that $f_{i_0i_1}-\mathit{id}$ is strictly lower triangular, where $\widehat{d}_0$ and $\widehat{d}_1$  are the boundary extensions of the differentials from the   initial and terminal vertices $e_{i_0}$ and $e_{i_1}$ of $e_{i_0i_1}$;
\item for each $2$-cell a strictly lower triangular homotopy operator $K=K_{i_0i_1i_2}:V(e_{i_0i_1i_2}) \rightarrow V(e_{i_0i_1i_2})$ satisfying
\[
\widehat{f}_{12} \circ \widehat{f}_{01}- \widehat{f}_{02} = \widehat{d}_{2}\circ K + K \circ \widehat{d}_{0}
\] 
where the $\widehat{f}_{ij}$ and $\widehat{d}_{i}$ are boundary extensions of the chain maps and differentials associated to the edges and vertices of $e_{i_0i_1i_2}$ as indicated in Figure \ref{fig:MapDiff}.
\end{itemize}
In fact, in \cite{RuSu3}  CHDs for Legendrian surfaces are defined in this manner as triples $\mathcal{C} = (\{d_\alpha\}, \{f_\beta\}, \{K_\gamma\})$ consisting of collections of differentials, chain maps, and chain homotopies associated to the vertices, edges, and $2$-dimensional faces of a compatible polygonal decomposition for $\Lambda$. 
\end{example}

\begin{remark}  In \cite{RuSu3}, the maps that constitute a CHD are required to be upper triangular rather than lower triangular.  In addition, the grading on the $V(e_I)$ is given by $\mu(S^I_i)$ on generators rather than $-\mu(S^I_i)$ as defined here.  Thus, the two definitions are equivalent after taking duals.  
\end{remark}

\begin{definition} An
 {\bf augmentation} of a unital DGA $(\mathcal{A}, \partial)$ to a field $\mathbb{K}$ is a unital DGA homomorphism
\[
\epsilon:(\mathcal{A}, \partial) \rightarrow (\mathbb{K},0)
\]
where $\mathbb{K}$ is concentrated in degree $0$.  
\end{definition}

The connection between CHDs and augmentations of the simplicial DGA is through the following result which generalizes Proposition 3.7 from \cite{RuSu3}.

\begin{proposition} \label{prop:CHDbiject}
For any $(\Lambda, \mu, \mathcal{E})$, there is a bijection between augmentations from the simplicial DGA to $\mathbb{K}$ and 
 CHDs for $(\Lambda, \mathcal{E})$ over $\mathbb{K}$.  
\end{proposition}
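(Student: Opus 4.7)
The plan is to construct explicit mutually inverse maps between augmentations $\epsilon : \mathcal{A}(\Lambda,\mathcal{E}) \to \mathbb{K}$ and CHDs $\mathcal{C} = \{c(e_I)\}$, using the matrix formulation (\ref{eq:Dmatrix2}) of the simplicial differential and the reformulation of simplex diagrams as DGA homomorphisms $\alpha : (\Omega C_*(\Delta), D_s^{\mathit{red}}) \to (\mathit{End}(V^\vee), 0)$ given in Remark \ref{rem:Simp2}.

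Given an augmentation $\epsilon$, for each $e_I$ I would define $c(e_I) : V(e_I) \to V(e_I)$ as the linear map whose matrix (in the ordered sheet basis) is $\epsilon(M(e_I))^T$ when $\dim e_I \neq 1$ and $\mathit{id} + \epsilon(M(e_I))^T$ when $\dim e_I = 1$. Since $\epsilon$ kills generators of nonzero degree and $|m_{i,j}^I| = \mu(S_i)-\mu(S_j)+\|e_I\|$, the surviving entries ensure the degree condition $|c(e_I)| = 1-\dim e_I$ with respect to the grading $|S^I_i| = -\mu(S^I_i)$. Strict (or identity-plus-strict, for edges) lower triangularity follows from strict upper triangularity of $M(e_I)$. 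I would then check that the abstract boundary extension $c_I(e_F)$ defined by (\ref{eq:boundary}) coincides with $\epsilon(M_I(e_F))^T$ (plus $\mathit{id}$ when $\dim e_F = 1$): the placement of generators into $M_I(e_F)$ via the sheet inclusion $\iota$ matches the decomposition $V(e_I) = \iota V(e_F) \oplus V^{F,I}_{\mathit{cusp}}$; the $1$'s inserted at cusp positions for $0$-cells realize the map $d_0$; and the absence of cusp $1$'s for $\dim e_F \geq 2$ matches the vanishing of $c_I(e_F)$ on $V^{F,I}_{\mathit{cusp}}$.

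The central step is verifying the simplex-diagram identity (\ref{eq:simplex}) for the collection $\{c_I(e_F)\}_{F \subset I}$. Applying $\epsilon$ entrywise to (\ref{eq:Dmatrix2}) and using $\epsilon \circ \partial = 0$ gives $(\epsilon \circ M_I) \circ D_s(e_I) = 0$ (since $\Theta_I$ is invertible), so the unital algebra homomorphism $\beta := \epsilon \circ M_I : \Omega C_*(\overline{e_I}) \to \mathit{Mat}(N_I,\mathbb{K})$ intertwines $D_s$ with $0$. By (\ref{eq:Dreduced}), the twisted homomorphism $\alpha := \beta \circ \varphi^{-1}$ then satisfies $\alpha \circ D_s^{\mathit{red}} = 0$, and the twist $\varphi^{-1}$ is precisely what contributes the extra $\mathit{id}$ term in the edge case, matching the convention $c_I(e_F) = \mathit{id} + \epsilon(M_I(e_F))^T$ when $\dim e_F = 1$. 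Transposing and invoking Remark \ref{rem:Simp2} identifies $\alpha$ (up to duality) with the desired simplex diagram on $V(e_I)$, giving exactly (\ref{eq:simplex}).

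For the inverse, given a CHD $\mathcal{C}$, define $\epsilon(m_{i,j}^I)$ as the $(j,i)$-entry of the strictly lower triangular part of $c(e_I)$, extend multiplicatively, and set $\epsilon$ to zero on generators of nonzero degree; then $\epsilon \circ \partial = 0$ on each generator is exactly the simplex-diagram identity for the face containing it, read off the preceding calculation in reverse. The two constructions are manifestly mutually inverse. The main obstacle I anticipate is sign bookkeeping — reconciling the signs in (\ref{eq:Dmatrix}) with those in (\ref{eq:simplex}), tracking the $\Theta_I$ diagonal, and verifying that transposition interacts correctly with the Koszul signs implicit in passing between $\mathit{End}(V)$ and $\mathit{End}(V^\vee)$ — but all these are routine once Remark \ref{rem:Simp2} and relation (\ref{eq:Dreduced}) are harnessed to convert between $D_s$ and $D_s^{\mathit{red}}$.
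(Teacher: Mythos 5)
Your overall strategy coincides with the paper's: define $c(e_I)$ via $[\epsilon \circ M_I \circ \varphi^{-1}(e_I)]^T$, recast the simplex-diagram axioms via Remark~\ref{rem:Simp2} as the vanishing of $\alpha_I \circ D_s^{\mathit{red}}$ for the algebra homomorphism $\alpha_I = \epsilon \circ M_I \circ \varphi^{-1}$, and use relation (\ref{eq:Dreduced}) to absorb the $\mathit{id}$-shift on $1$-cells. This is exactly the paper's (\ref{eq:CHDbiject}) and the surrounding argument.

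There is, however, a genuine gap in the forward direction, at the step ``so the unital algebra homomorphism $\beta := \epsilon \circ M_I$ intertwines $D_s$ with $0$.'' Applying $\epsilon$ to the single matrix equation (\ref{eq:Dmatrix2}) gives only $\beta(D_s(e_I)) = 0$. But $\beta \circ D_s$ is a $\beta$-derivation, so its vanishing must be verified on \emph{every} generating face $e_F \subset \overline{e_I}$, not just on the top face $e_I$; and for $F \subsetneq I$ the relevant relation (\ref{eq:simplex}) involves the boundary extensions $c_I(e_G)$, none of which occur in $\partial M(e_I)$. What you need is the identity $\partial \circ M_I(x) = \Theta_I\cdot(M_I \circ D_s)(x)$ for all $x \in \Omega C_*(\overline{e_I})$ — precisely the Claim (\ref{eq:Claim1}) established inside the proof of Theorem~\ref{thm:dg}, whose proof is itself not free: it rests on the block-sum decomposition (\ref{eq:M1}) and on $N \circ D_s = 0$ from (\ref{eq:N1}). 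The paper cites (\ref{eq:Claim1}) explicitly at this point; your argument must do the same, or else reproduce the block-sum reduction $\beta(D_s(e_F)) = P_{I,F}\bigl(\beta_F(D_s(e_F)) \oplus N(D_s(e_F)) \oplus \cdots\bigr)P_{I,F}^{-1} = 0$, which reduces each proper face to the already-handled top-face computation applied to $e_F$. Once this is supplied, the remainder of your sketch (the treatment of degree, triangularity, the $\varphi^{-1}$ twist, and the inverse map) is correct and matches the paper.
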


\begin{proof}
To an augmentation $\epsilon: (\mathcal{A}, \partial) \rightarrow \mathbb{K}$ we associate a CHD $\mathcal{C}_\epsilon = \{c(e_I)\}$ by defining $c(e_I): V(e_I) \rightarrow V(e_I)$ so that with respect to the ordered basis $\{S_1^I, \ldots, S^I_{N_I}\}$ for $V(e_I)$ we have:
\begin{equation} \label{eq:CHDbiject}
\mbox{Matrix of $c(e_I)$} \quad = \quad \left[\epsilon \circ M_I \circ \varphi^{-1}(e_I)\right]^{T}
\end{equation}
where $M_I: \Omega C_*(\overline{e_I}) \rightarrow \mathit{Mat}(n_I, \mathcal{A})$ and $\varphi: \Omega C_*(\overline{e_I}) \rightarrow \Omega C_*(\overline{e_I})$ are the algebra homomorphisms from Section \ref{sec:Cobar}.  (See (\ref{eq:Dmatrix2}) and (\ref{eq:varphimap}).)  In particular, $\varphi^{-1}(e_F) = e_F +1$ if $\dim(e_F) =1$ and $\varphi^{-1}(e_F) = e_F$ if $\dim(e_F) \neq 1$.  Note that, because the only non-zero entries of $M_I(e_I)$ are the generators $m^I_{i,j}$ with $S^I_i \prec S^I_j$, it is the case that $c(e_I)$ is strictly lower triangular when $\dim e_I \neq 1$ and is the sum of a strictly lower triangular map with the identity map when $\dim e_I = 1$.  Moreover, using that $\epsilon$ vanishes on generators of non-zero degree, it can be checked that $\deg c(e_I) = 1 - \dim e_I$.  
 The construction can be reversed, so (\ref{eq:CHDbiject}) gives a bijection between graded algebra homomorphisms $\epsilon: \mathcal{A} \rightarrow \mathbb{K}$ and collections of maps $\{c(e_I)\}$ satisfying the first two requirements of Definition \ref{def:CHD}.  To complete the proof we show that the augmentation equation $\epsilon \circ \partial = 0$ holds if and only if each $(\overline{e_I}, V(e_I), \{c_I(e_F)\}_{F \subset I})$ is a simplex diagram.  
 
As a preliminary, note that a comparison of the constructions of the matrices $M_I(e_F)$ and the boundary extensions $c_I(e_F)$ from $M_F(e_F)$ and $c(e_F)$ respectively shows that 
\begin{equation} \label{eq:eFeI}
\forall \, e_F \subset \overline{e_I}, \quad  \mbox{Matrix of $c_I(e_F)$} \quad = \quad \left[\epsilon \circ M_I \circ \varphi^{-1}(e_F)\right]^{T}.
\end{equation}
We will make use of the alternate characterization of simplex diagrams from Remark \ref{rem:Simp2} which states that $(\overline{e_I}, V(e_I), \{c_I(e_F)\}_{F \subset I})$
will be a simplex diagram if and only if 
\[
\alpha_I \circ D^{\mathit{red}}_s = 0
\]
where $\alpha_I: \Omega C_*(\overline{e_I}) \rightarrow \mathit{End}( V(e_I)^\vee)$ is the graded algebra homomorphism satisfying $\alpha_I(e_F) = c_I(e_F)^\vee$.  Using the basis dual to $\{S^I_i\}$ to identify $\mathit{End}( V(e_I)^\vee) \cong \mathit{Mat}(n, \mathbb{K})$ and applying (\ref{eq:eFeI}), for any $e_F \subset \overline{e_I}$ we have
\[
 \alpha_I(e_F) = \mbox{Matrix of } c_I(e_F)^{\vee}  
 = \left(\mbox{Matrix of } c_I(e_F) \right)^{T} 
  = \epsilon \circ M_I \circ \varphi^{-1}(e_F).
\]
Thus, $\alpha_I = \epsilon \circ M_I \circ \varphi^{-1}$.  Making use of the identities (\ref{eq:Dreduced}) and (\ref{eq:Claim1})
 we compute
\begin{equation}
\alpha_I \circ D^{\mathit{red}}_s  = \epsilon \circ M_I \circ D_s \circ \varphi^{-1}  \label{eq:alpha0} = \Theta_I \cdot (\epsilon \circ \partial  \circ M_I  \circ \varphi^{-1})  
\end{equation}
where we used that $\epsilon (\Theta_I) = \Theta_I$ holds since $\epsilon( \pm 1) = \pm 1$.  

Assuming that $\epsilon \circ \partial =0$, the identity (\ref{eq:alpha0}) shows that $\alpha_I \circ D^{\mathit{red}}_s =0$ so that each $(\overline{e_I}, V(e_I), \{c_{I}(e_F)\})$ is indeed a simplex diagram.  Conversely, if each $(\overline{e_I}, V(e_I), \{c_{I}(e_F)\})$ is a simplex diagram then $\alpha_I \circ D^\mathit{red}_s=0$, so applying both sides of (\ref{eq:alpha0}) to  $\varphi(e_I)$ and multiplying on the left by $\Theta_I$ shows that 
\[
\epsilon \circ \partial ( M_I(e_I)) = 0.
\]
As every generator of $\mathcal{A}$ appears as an entry of $M_I(e_I)$ for some $e_I$, we see that $\epsilon \circ \partial = 0$ as required.

\end{proof}

\section{Construction of sheaves from CHDs}
\label{sec:SheafConstruction}

In this section, we will associate a combinatorial sheaf
 $F(\mathcal{C}) \in \mathbf{Fun}^\bullet_{\Lambda}(\mathcal{S}, \mathbb{K})$ to a CHD $\mathcal{C}$ of a Legendrian surface $\Lambda$ with compatible simplicial decomposition $\mathcal{E}$.  Together with the bijection between augmentations and CHDs from Proposition \ref{prop:CHDbiject} and the equivalence of categories  from Corollary \ref{cor:Equiv},  $\mathbf{Fun}^\bullet_{\Lambda}(\mathcal{S}, \mathbb{K}) \cong \ShLambda$, this completes the construction of sheaves from augmentations as stated in Theorem \ref{thm:main}.   In outline, $F(\mathcal{C})$ is defined as follows:  Using the stratification, $\mathcal{S}$, of $M \times \R$ 
 from Section \ref{sec:handle}, 
 we begin in Section \ref{sec:PreliminaryModules} by associating to each stratum $s \in \mathcal{S}$ a $\mathbb{K}$-module $X(s)= Y(s) \oplus Z(s),$ where $Y(s)$ is generated by the sheets below $s,$ and $Z(s)$ is $\{0\}$ except in certain exceptional cases when $s$ is near a cusp edge.
 In Section \ref{sec:Maps}, $X$ is extended to a functor from  $P(\mathcal{S})$, the poset category of $\mathcal{S}$, to graded $\mathbb{K}$-modules.
In Section \ref{sec:MapG} we use the chain homotopy diagram $\mathcal{C}$ to upgrade $X$ to a functor $G(\mathcal{C})$ from $P(\mathcal{S})$ to the category of simplex diagrams $\mathfrak{sd}$ defined in Section \ref{sec:SimplexDiagrams}.  Finally, we compose $G(\mathcal{C})$ with the mapping cylinder functor $Map$ from Proposition \ref{prop:MappingFun} to produce the required functor $F(\mathcal{C})$ from $P(\mathcal{S})$ to $\mathit{Ch}(\mathbb{K}\mbox{-mod})$ with  Proposition \ref{prop:FisFun} establishing that $F(\mathcal{C})$ indeed belongs to $\mathbf{Fun}_{\Lambda}^\bullet(\mathcal{S}, \mathbb{K}).$

Throughout this section, we assume that $\dim(\Lambda) = 2.$

\subsection{Preliminary modules assigned to strata of $\mathcal{S}$}
\label{sec:PreliminaryModules}

Recall the constructions of Section \ref{sec:handle}.  Starting from a simplicial decomposition $\mathcal{E}$ of $M$ that is compatible with a Legendrian surface $\Lambda \subset J^1M$ with mild front singularities, we formed a handle decomposition $H=\{h(\bfe)\}$ which is a polygonal decomposition of $M$ whose cells $h(\bfe)$ are labeled by ordered pairs $\bfe = (e_I, e_J)$ of simplices $e_I, e_J \in \mathcal{E}$ with $e_I \leq e_J$.  
 See Figures \ref{fig:PairSub} and \ref{fig:HLabel}.
Next, the stratification $\mathcal{S}$ of $M\times \R$ was formed 
 having as strata the connected components of the intersections $\left(h(\bfe) \times \R \right)\cap  \Lambda^F_k$ where $M \times \R = \sqcup_{k=-1}^2 \Lambda^F_{k} $ is the $\Lambda^F$-stratification of $M \times \R$ from (\ref{eq:xzstrat}).

\begin{notation} We write $\mathcal{S}(\mathbf{e}) \subset \mathcal{S}$ for those strata $s \in \mathcal{S}$ with $\pi_x(s) \subset h(\mathbf{e})$.  
\end{notation}

Recall that for $e_I \in \mathcal{E}$, $\Lambda(e_I)$ denotes the set of sheets of $\Lambda$ above $e_I$, and by definition these are the components of $\pi_x^{-1}(e_I) \cap \Lambda$ that are disjoint from $\Lambda_{\mathit{cusp}}$.  We extend this terminology and notation to cells of $H$ as follows.   Given $h(\mathbf{e}) \in H$, let $\Lambda(\mathbf{e})$ denote the set of {\bf non-cusp sheets of $\Lambda$ above $h(\mathbf{e})$} which we define to be the set of those components of $\Lambda \cap \pi_x^{-1}(h(\mathbf{e}))$ that are disjoint from $\Lambda_{\mathit{cusp}}$.  Note that sheets in $\Lambda(\mathbf{e})$ project diffeomorphically to $h(\mathbf{e})$ under $\pi_x$.  Moreover, for $\bfe = (e_I, e_J)$, there is a bijection
\begin{equation} \label{eq:eJ}
\Lambda(\bfe) \cong \Lambda(e_J)
\end{equation}
where a sheet $T$ above $\bfe$ is identified with the unique sheet $S$ above $e_J$ for which $T \cap S \neq \emptyset$.  

Next, suppose that $C \subset \Lambda \cap \pi_{x}^{-1}(h(\mathbf{e}))$ is a {\bf cusp component} of $\Lambda$ above $h(\mathbf{e})$, i.e. $C$ {\it does} intersect $\Lambda_{\mathit{cusp}}$.  Such a component can be written in the form $C = C_0 \sqcup C_u \sqcup C_l $ where $C_0 = C \cap \Lambda_{\mathit{cusp}}$ and under the front projection $C_u$ and $C_l$ map respectively to the upper and lower branches of $\pi_{xz}(\Lambda)$ that meet at the cusp edge $\pi_{xz}(C_{0})$.  Notice that if $\dim h(\mathbf{e}) = 0$, then there are no cusp components above $h(\mathbf{e})$.  If $\dim h(\mathbf{e}) = 1$ or $2$, then $\pi_x(C_u) = \pi_x(C_l)$ is a proper subset of $h(\mathbf{e})$.

\begin{definition}
We say that $s \in \mathcal{S}(\bfe)$ is an {\bf exceptional stratum} for a cusp component $C$ of $\Lambda$ above $h(\mathbf{e})$ when, at any $x$ coordinate belonging to $s,$ $s$ lies strictly above $\pi_{xz}(C_l)$ and weakly below $\pi_{xz}(C_u)$.  That is, for any points of the form $(x,z_l) \in \pi_{xz}(C_l)$, $(x,z) \in s$, and $(x,z_u) \in \pi_{xz}(C_u)$, we have $z_l < z \leq z_u$.  
\end{definition}

For example, above a $1$-cell that is bisected by the base projection of a cusp edge, there are two exceptional strata as indicated in Figure \ref{fig:cusp}. 

\begin{figure}
\labellist
\small
\pinlabel $h(\bfe')$ [t] at 64 2
\pinlabel $h(\bfe)$ [t] at 124 -2
\pinlabel $T_1$ [l] at 128 206
\pinlabel $T_2$ [l] at 128 168
\pinlabel $T_3$ [l] at 128 124
\pinlabel $T_4$ [l] at 128 100
\pinlabel $T_5$ [l] at 128 62
\pinlabel $T'_1$ [b] at 28 206
\pinlabel $T'_2$ [b] at 28 102
\pinlabel $T'_3$ [b] at 28 64
\pinlabel $s'$ [l] at 158 142
\pinlabel $s$ [b] at 150 196

\endlabellist
\centerline{\includegraphics[scale=1]{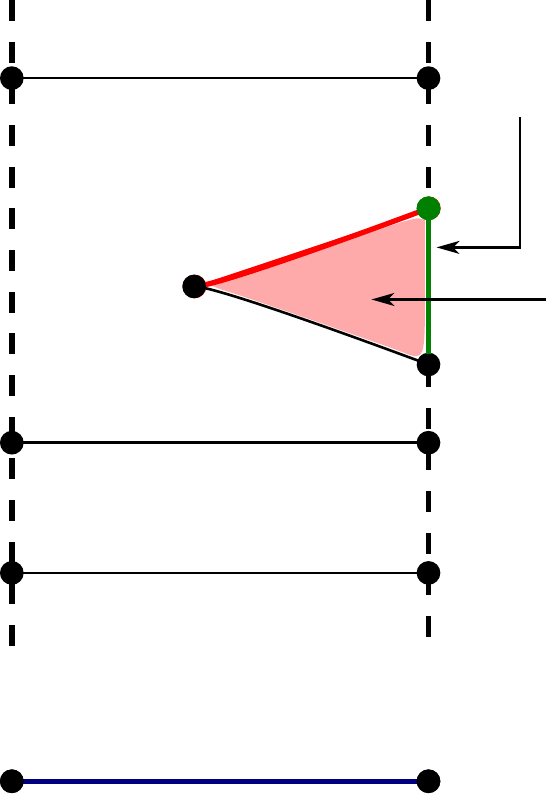}}

\quad

\caption{ An example of a $1$-cell $h(\bfe')$ and a $0$-cell $h(\bfe)$ where $\Lambda$ has sheets labeled as $\Lambda(\bfe') = \{T_1', T_2', T_3'\}$ and $\Lambda(\bfe) = \{T_i\,|\, 1 \leq i \leq 5\}$.  Above $h(\bfe')$, $\Lambda$ has a single cusp component, $C= C_0 \sqcup C_u\sqcup C_l$, for which there are two exceptional strata in $\mathcal{S}(\bfe')$ that appear shaded in red.  (They are the $2$-dimensional stratum $s'$ and the $1$-stratum  that appears at its upper boundary.)  For the labeled strata $s'$ and $s$ the preliminary modules are $X(s') = Y(s') \oplus Z(s') = \mathbb{K}\{T_2',T_3'\} \oplus \mathbb{K}\{v_C\}$ and $X(s) = Y(s) \oplus Z(s) = \mathbb{K}\{T_3, T_4, T_5\} \oplus \{0\}$.  The sheets $T_3,T_4,T_5$ belong respectively to $\overline{C_l}, \overline{T'_2}, \overline{T'_3}$, so the preliminary generization map is $X(s \rightarrow s'):  \quad T_3 \mapsto v_C, \quad T_4 \mapsto T_2', \quad T_5 \mapsto T_3'$.  For the strata pictured in Figure \ref{fig:cusp}, the map $k$ from Definition \ref{def:maps} is only non-zero on generization maps from a green stratum  to a red stratum.
} 

\label{fig:cusp}
\end{figure}

\begin{definition} Given a stratum $s \in \mathcal{S}(\mathbf{e})$ define the {\bf preliminary module}
\[
X(s) = Y(s) \oplus Z(s)
\]
where $Y(s)$ is the span of those (non-cusp) sheets in $\Lambda(\mathbf{e})$ that are strictly below $s$ (at any $x \in \pi_x(s)$), i.e.
\[
Y(s) = \mbox{Span}_\mathbb{K} \left\{ T \in \Lambda(\bfe) \,|\, z_T < z_s \, \mbox{whenever}\, (x,z_T) \in \pi_{xz}(T),\, (x,z_s) \in s \right\} 
\]
and
\[
Z(s) = \mbox{Span}_\mathbb{K}\left\{ v_C \,\middle|\, \begin{array}{l} \mbox{$s$ is an exceptional stratum} \\ \mbox{for the cusp component $C$}, \end{array}\right\}.
\]
A grading on $X(s)$ is defined using the Maslov potential $\mu$ by requiring that sheets $T \in Y(s)$ have $|T| = -\mu(T)$, and that the $v_C \in Z(s)$ satisfy $|v_C| = -\mu(C_l)$.  
\end{definition}

\begin{observation}
\begin{enumerate}
\item Notice that the bijection (\ref{eq:eJ}) allows us to view $Y(s) \subset V(e_J)$.   
\item  
 The subspace $Z(s)$ has $\dim Z(s) \leq 1$.  [A single strata is never exceptional for more than $1$ cusp component.] 
\item
Unlike $Y(s)$ which contains all the non-cusp sheets below $s$,  
 the generators of $Z(s)$ only correspond to those cusp components for which $s$ is between $\pi_x(C_l)$ and $\pi_x(C_u)$.  Intuitively, once both the lower and upper branches of a cusp appear below $s,$ they can be viewed as  canceling one another from appearing in $X(s)$.
 
 \end{enumerate}
\end{observation}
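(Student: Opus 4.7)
Items (1) and (3) are essentially immediate: (1) follows from the bijection $\Lambda(\bfe) \cong \Lambda(e_J)$ in (\ref{eq:eJ}), which extends linearly to an isomorphism $\mathrm{Span}_\mathbb{K}\Lambda(\bfe) \xrightarrow{\cong} V(e_J)$ under which $Y(s) \subset \mathrm{Span}_\mathbb{K}\Lambda(\bfe)$ is identified with a subspace of $V(e_J)$; (3) is heuristic commentary on the geometry of $Z(s)$ rather than a formal claim. The substantive content is item (2).

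For (2), the claim is that no stratum $s \in \mathcal{S}(\bfe)$ is exceptional for two distinct cusp components $C, C'$ of $\Lambda$ above $h(\bfe)$. My plan is to argue locally at a fixed point $x \in \pi_x(s)$. Because $s$ is a single connected component of $h(\bfe) \times \R \cap \Lambda^F_k$, the relative $z$-ordering of all sheets of $\Lambda$ near $s$ must be constant as $x$ varies over $\pi_x(s)$, since any interchange of ordering would appear as a crossing $1$-stratum in $\Lambda^F_1$ subdividing $s$. Supposing $s$ is exceptional for both $C$ and $C'$, the $z$-extent of $s$ at $x$ sits simultaneously in both wedges $\bigl(z_{C_l}(x), z_{C_u}(x)\bigr]$ and $\bigl(z_{C'_l}(x), z_{C'_u}(x)\bigr]$. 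The key step is to show, using the local normal form at a cusp edge from Section \ref{sec:Mild}, that this forces $\{C_u, C_l\} = \{C'_u, C'_l\}$ as sheets of $\Lambda$ over $\pi_x(s)$: the intuition is that any cusping sheet of $C'$ strictly separating $s$ from a bounding cusping sheet of $C$ would have to pinch off at its own cusp edge somewhere over $\pi_x(s)$, producing an additional $0$- or $1$-stratum of $\Lambda^F_{\geq 1}$ that would subdivide $s$. Once the cusping sheets agree, the connectedness of cusp components as components of $\Lambda \cap \pi_x^{-1}(h(\bfe))$ forces $C = C'$.

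The main obstacle I anticipate is making the ``pinch-off'' argument fully rigorous for low-dimensional exceptional strata lying along a cusping sheet itself, such as the $1$-stratum on $C_u$ visible in Figure \ref{fig:cusp}. For such $s$ the $z$-extent at $x$ collapses to a single height coinciding with $z_{C_u}(x)$, so the clean bounding-interval argument does not directly separate $C$ from $C'$. My plan is to reduce to an adjacent higher-dimensional exceptional stratum $\tilde{s}$ interior to the wedge of $C$ with $s \subset \overline{\tilde{s}}$ (this is arranged by the structure of $\mathcal{S}$ near a cusp edge), apply the interval argument to $\tilde{s}$, and then transfer the uniqueness conclusion from $\tilde{s}$ down to $s$ via the closure relation.
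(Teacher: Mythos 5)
The student's handling of items (1) and (3) is unobjectionable (item (1) is a definitional identification, item (3) is heuristic). The issue is with item (2), which is the only substantive claim, and there the key step is wrong. For the record, the paper itself does not actually prove this statement; it records it as an Observation with only the bracketed one-line remark.

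The flaw is the assertion that a cusping sheet of $C'$ strictly separating $s$ from a cusping sheet of $C$ ``would have to pinch off at its own cusp edge somewhere over $\pi_x(s)$.'' This is false by the very definition of an exceptional stratum: if $s$ is exceptional for $C'$ then (for the condition to be non-vacuous) $\pi_x(s) \subset \pi_x(C'_l)=\pi_x(C'_u)$, and this set is \emph{disjoint} from the cusp edge projection $\pi_x(C'_0)$ (the latter is the frontier of the former). So the cusp edge of $C'$ never lies over $\pi_x(s)$, the sheet $C'_l$ does not pinch off there, and no new stratum is forced to subdivide $s$. The same issue propagates into your fallback plan for low-dimensional $s$, which invokes the same ``interval argument.''

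A correct argument tracks the inner sheet toward the cusp edge of the \emph{other} component. Suppose $s$ were exceptional for distinct cusp components $C\neq C'$. Over the overlap region $\pi_x(C_u)\cap\pi_x(C'_u)$ the $z$-ordering of the four sheets $C_u, C_l, C'_u, C'_l$ is locally constant (no crossings in the interior of the handle away from the $1$-skeleton), and since the two $z$-intervals $(z_{C_l}, z_{C_u}]$ and $(z_{C'_l}, z_{C'_u}]$ meet, one cusping sheet of $C'$, say $C'_l$, satisfies $z_{C_l}(x) < z_{C'_l}(x) < z_{C_u}(x)$ throughout. Now approach a point $p$ of $\pi_x(C_0)\cap\overline{\pi_x(C'_l)}$ from inside the overlap. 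There $z_{C_u}(x)-z_{C_l}(x)\to 0$, so the squeezed sheet $C'_l$ must meet the cusp edge of $C$ in the front at $p$; this is a cusp--sheet intersection, a point of $\Lambda^F_2$. By compatibility (Definition \ref{def:compatible}) its base projection lies in the $0$-skeleton, but $\pi_x(C_0)$ runs along $1$-cells and (because both cusp arcs pass through the central vertex of the $0$-handle and are transverse in the base) $p$ can be taken in the interior of a $1$-cell --- a contradiction. Thus only disjoint $z$-intervals are possible, so no stratum is exceptional for two cusp components. For $h(\bfe)$ a $1$-handle, genericity of the base projection already forces at most one cusp arc above the underlying $1$-cell, so the claim is trivial there.
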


Examples of the preliminary modules $X(s)$ appear in Figures \ref{fig:cusp} and \ref{fig:CrossingEx}.

\begin{figure}
\labellist
\small
\pinlabel $h(\bfe)$ [l] at 248 56
\pinlabel $T_1$  at 74 340
\pinlabel $T_2$  at 74 268
\pinlabel $T_3$  at 74 202
\pinlabel $T_4$  at 74 140
\pinlabel $\{0\}$ [r] at 0 100
\pinlabel $\mathbb{K}\{T_4\}$ [l] at 240 176
\pinlabel $\mathbb{K}\{T_3,T_4\}$ [r] at 0 228
\pinlabel $\mathbb{K}\{T_2,T_4\}$ [l] at 240 236
\pinlabel $\mathbb{K}\{T_2,T_3,T_4\}$ [l] at 240 304
\pinlabel $\mathbb{K}\{T_1,T_2,T_3,T_4\}$ [r] at 0 378
\endlabellist
\centerline{\includegraphics[scale=.8]{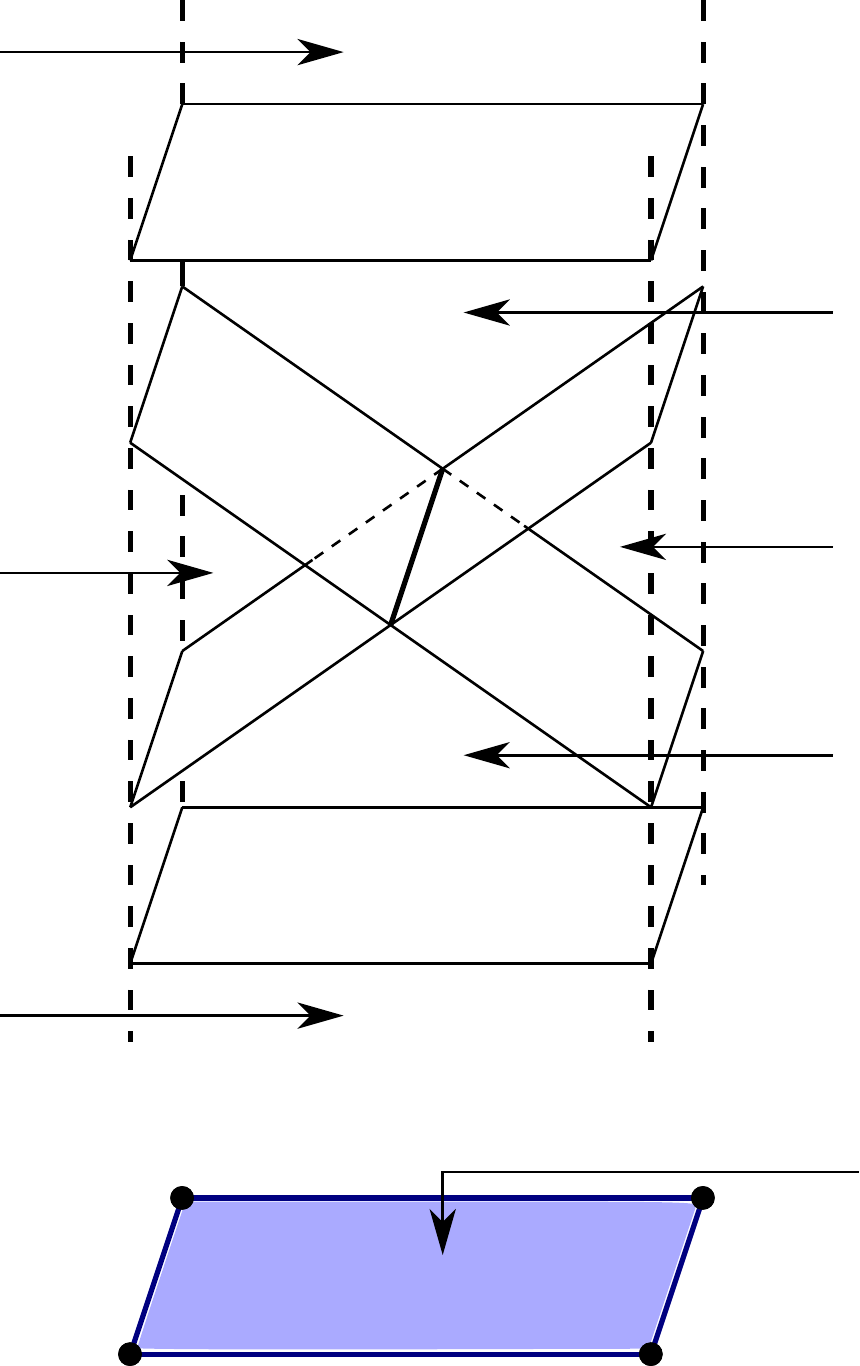}}

\quad

\caption{An example of a $2$-cell, $h(\bfe)$, where $\Lambda(\bfe)$ has $4$ sheets, $T_1, T_2, T_3, T_4$, such that the images of $T_2$ and $T_3$ intersect along a crossing arc in the front projection.   For each of the 3-dimensional strata in $\mathcal{S}(\bfe)$ the vector space $X(s) = Y(s)$ is indicated.  (In all cases $Z(s) = 0$.)  }
\label{fig:CrossingEx}
\end{figure}

\subsection{Preliminary generization maps}  \label{sec:Maps}
 Let $s \in \mathcal{S}(\bfe)$, $s' \in \mathcal{S}(\bfe')$ have $s \leq s'$, i.e. $s \subset \overline{s'}$, where $\bfe= (e_I, e_J)$   and $\bfe' =(e_{I'}, e_{J'})$.   
 Note that $s \leq s'$ implies $h(\bfe) \leq h(\bfe')$, and in general 
\[
h(\bfe) \leq h(\bfe') \quad \Leftrightarrow \quad e_I \leq e_{I'} \mbox{ and } e_J \geq e_{J'}.
\]
(See Figure \ref{fig:HLabel}.)
\begin{definition} \label{def:maps} The {\bf preliminary generization map} $X(s \rightarrow s'):X(s) \rightarrow X(s')$ is
\[
X(s \rightarrow s') = (p, k+ \ell): Y(s) \oplus Z(s) \rightarrow Y(s') \oplus Z(s')
\] 
with component maps $p:Y(s) \rightarrow Y(s')$,  $k:Y(s) \rightarrow Z(s')$ and $\ell:Z(s) \rightarrow Z(s')$ defined as follows:
\begin{itemize}
\item The projection map $p(e_J \rightarrow e_{J'}): V(e_J) \rightarrow V(e_{J'})$  from (\ref{eq:projection}) leads to a projection map $p: Y(s) \rightarrow Y(s')$ (where sheets  in $\Lambda(\bfe)$ that belong to the closure of a cusp component above $h(\bfe')$ are mapped to 0).  

[To see $p(e_J \rightarrow e_{J'})(Y(s)) \subset Y(s')$, suppose that $T \in \Lambda(\bfe)$ has  $T \in Y(s) $ and $p(e_J\rightarrow e_{J'})(T) = T' \in \Lambda(\bfe')$.  This means that $T \subset \overline{T'}$.  Suppose for contradiction that $T' \notin Y(s')$.  Then, there exists $(x',z') \in s'$ and $(x',z_T') \in T'$ such that $z' \leq z_T'$.  Since $T'$, viewed as a union of strata in $\mathcal{S}(\bfe')$, separates $h(\bfe') \times \R_z$ into two pieces we see that $z' \leq z_T'$ actually holds for all $(x',z') \in s'$ and $(x',z_T') \in T'$.  Taking sequences of the form $(x_k',z_k') \in s'$ and $(x_k', (z_T')_k) \in T'$ converging to $(x,z) \in s$
 and $(x,z_T) \in T$, we see that $z \leq z_T$.  This contradicts $T \in Y(s)$.]   
\item The map $k:Y(s) \rightarrow Z(s')$ is defined so that for each cusp component $C = C_0 \sqcup C_{u} \sqcup C_l$ above $h(\bfe')$ such that $v_C \in Z(s')$, we have $k(T) = v_C$ if $T$ belongs to the closure of the lower cusp sheet, $\overline{C_{l}}$.  Moreover, $k$  is $0$ on all other generators.  Put another way, 
\[
k(T) = \sum_C \delta(T,C,s') v_C
\]
where $\delta(T,C,s')=1$ if $T \subset \overline{C_l}$ and $s'$ is an exceptional stratum for $C$ and $\delta(T,C,s')=0$ otherwise.
\item Any cusp component $C$ above $h(\bfe)$ belongs to the closure of a  unique cusp component $C'$ above $h(\bfe')$, and the map $\ell: Z(s) \rightarrow Z(s')$ has $\ell(v_C) = v_{C'}$ if $s'$ is exceptional for $C'$ and $\ell(v_C) = 0$ otherwise.  That is,
\[
\ell(v_C) = \sum_{C'} \delta(C, C',s') v_{C'}
\]
where $\delta(C,C',s') = 1$ if $C \subset \overline{C'}$ and $s'$ is an exceptional stratum for $C'$ and $\delta(C,C',s') = 0$ otherwise. 
\end{itemize}
\end{definition}

A sample computation of a preliminary generization map, $X(s \rightarrow s')$, appears in Figure \ref{fig:cusp}.  

\begin{proposition}
\label{prop:XFunctor}
The assignment 
\[
s \in \mathcal{S} \mapsto X(s), \quad \left( s \rightarrow s'\right) \mapsto \left(X(s\rightarrow s'): X(s) \rightarrow X(s')\right)
\]
defines a functor $X: P(\mathcal{S})\rightarrow \mathbb{K}-\mathit{mod}^{Gr}$ (from the poset category of $\mathcal{S}$ to the category of graded $\mathbb{K}$-modules.)
\end{proposition}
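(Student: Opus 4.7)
The plan is to verify the two functor axioms (identity and composition), the bulk of the work being in composition.

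As a preliminary, I would check that each preliminary generization map $X(s \to s') = (p, k+\ell)$ is a well-defined, degree-preserving map of graded $\mathbb{K}$-modules. The containment $p(Y(s)) \subset Y(s')$ is already established in the bracketed remark of Definition \ref{def:maps}, and by construction $k$ lands in $Z(s')$ and $\ell$ lands in $Z(s')$. Grading preservation uses that $\mu$ is locally constant on $\Lambda \setminus \Lambda_{\mathit{cusp}}$: if $p(T) = T'$ then $T \subset \overline{T'}$ forces $\mu(T) = \mu(T')$; if $k(T)$ has $v_C$ as a non-zero term then $T \subset \overline{C_l}$ forces $\mu(T) = \mu(C_l)$; and if $\ell(v_C) = v_{C'}$ then $C_l \subset \overline{C'_l}$ gives $\mu(C_l) = \mu(C'_l)$.

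The identity axiom $X(s \to s) = \mathit{id}_{X(s)}$ is immediate: when $\mathbf{e} = \mathbf{e}'$, the decomposition (\ref{eq:directsum}) is trivial so $p(e_J \to e_J)$ is the identity; $\ell$ is the identity on $Z(s)$ since each cusp component $C$ above $h(\mathbf{e})$ trivially satisfies $C \subset \overline{C}$; and $k$ vanishes on $Y(s)$ because a non-cusp sheet $T \in \Lambda(\mathbf{e})$ and a cusp component $C$ above the same cell $h(\mathbf{e})$ are distinct connected components of $\Lambda \cap \pi_x^{-1}(h(\mathbf{e}))$, so $T \not\subset \overline{C_l}$.

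For the composition axiom, given a chain $s \leq s' \leq s''$, write $X(s \to s') = (p_1, k_1+\ell_1)$, $X(s'\to s'') = (p_2, k_2+\ell_2)$, and $X(s\to s'') = (p_{12}, k_{12}+\ell_{12})$. Expanding $X(s' \to s'') \circ X(s \to s') = X(s \to s'')$ separately on the $Y$ and $Z$ summands gives three identities to verify:
\[
\textup{(a)}\quad p_2 \circ p_1 = p_{12}, \qquad \textup{(b)}\quad \ell_2 \circ \ell_1 = \ell_{12}, \qquad \textup{(c)}\quad k_2 \circ p_1 + \ell_2 \circ k_1 = k_{12}.
\]
Equation (a) is essentially transitivity of the $\iota$-extension of sheets along $V(e_J) \supset V(e_{J'}) \supset V(e_{J''})$: a sheet $T \in \Lambda(e_J)$ that projects to a non-zero element of $V(e_{J''})$ must also project to a non-zero element of $V(e_{J'})$, and the composed projection agrees with the direct one. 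For (b), the map $\ell_2\ell_1$ sends $v_C \mapsto v_{C''}$ precisely when $s'$ is exceptional for $C'$ and $s''$ is exceptional for $C''$, whereas $\ell_{12}$ requires only $s''$ exceptional for $C''$; these conditions agree because if $v_C \in Z(s)$ (so $s$ is strictly above $C_l$ and weakly below $C_u$), then a limit argument using $s \subset \overline{s'} \subset \overline{s''}$ rules out $s'$ lying on or strictly below $C'_l$, and rules out $s'$ strictly above $C'_u$ whenever $s''$ is weakly below $C''_u$. For (c), I would enumerate: a non-cusp sheet $T \in Y(s)$ extends to $\Lambda \cap \pi_x^{-1}(h(\mathbf{e}''))$ as either a non-cusp sheet $T''$ (in which case $T \not\subset \overline{C''_l}$ for any cusp) or as part of a cusp component $C''$; and in the latter case the intermediate extension above $h(\mathbf{e}')$ is either non-cusp (contributing to $k_2 \circ p_1$) or already a cusp $C'$ with $C' \subset \overline{C''}$ (contributing to $\ell_2 \circ k_1$). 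The two possibilities are mutually exclusive and together account for exactly the term $v_{C''}$ in $k_{12}(T)$.

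The main obstacle is the bookkeeping for (b) and (c): ensuring that the exceptional-stratum condition and the lower-branch condition $T \subset \overline{C_l}$ propagate compatibly along the chain $s \leq s' \leq s''$, despite the multiple geometric configurations in which cusp edges can appear relative to the cells of $H$. The key geometric inputs are the uniqueness of cusp extension from smaller to larger cells (as asserted in Definition \ref{def:maps}) and the compatibility of closure inclusions in $\Lambda$ with the stratification of $M \times \R$ by $\mathcal{S}$.
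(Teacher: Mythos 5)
Your proposal follows essentially the same route as the paper: check the identity axiom, then reduce the composition axiom to the three component identities on $p$, $\ell$, $k$ (your (a), (b), (c) match the paper's equations (\ref{eq:p}), (\ref{eq:l}), (\ref{eq:k})), and for the mixed identity on $k$ split into the same two cases according to whether $\overline{C''}$ contains a cusp component above $h(\mathbf{e}')$ or not. The paper carries out the $\delta$-function/exceptional-stratum bookkeeping via explicit $z$-coordinate limit arguments along $s \subset \overline{s'} \subset \overline{s''}$ that you defer to your closing paragraph, but the case structure and geometric inputs you identify are the ones actually used.
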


\begin{proof}   That $X(s \rightarrow s) = \mathit{id}_{X(s)}$ is clear from the definition.  
Consider strata $s \leq s' \leq s''$ above $h(\bfe) \leq h(\bfe') \leq h(\bfe'')$ with $\bfe = (e_I,e_J)$, $\bfe'=(e_{I'},e_{J'})$, $\bfe''=(e_{I''},e_{J''})$ so that $e_I \leq e_{I'} \leq e_{I''}$ and $e_J \geq e_{J'} \geq e_{J''}$.  To check that $X(s \rightarrow s'') = X(s' \rightarrow s'') \circ X(s\rightarrow s')$, we verify that the component functions satisfy 
\begin{align}
p(s \rightarrow s'') &= p(s' \rightarrow s'') \circ p(s \rightarrow s'),  \label{eq:p} \\
  k(s \rightarrow s'') & = k(s' \rightarrow s'') \circ p(s \rightarrow s') + \ell(s' \rightarrow s'') \circ k(s \rightarrow s'), \quad \quad \mbox{and}  \label{eq:k} \\
 \ell(s\rightarrow s'') &= \ell(s'\rightarrow s'') \circ \ell(s\rightarrow s'). \label{eq:l}
\end{align}
Note that (with notation as in (\ref{eq:directsum})) $V_{\mathit{cusp}}^{J'',J} = V_{\mathit{cusp}}^{J'', J'} \oplus V_{\mathit{cusp}}^{J', J}$.  Therefore, the projections factor as $p(e_{J} \rightarrow e_{J''}) = p(e_{J'} \rightarrow e_{J''}) \circ p(e_{J} \rightarrow e_{J'})$ and (\ref{eq:p}) follows.

To verify (\ref{eq:k}), let $T \in Y(s)$.  When applied to $T$, both sides of (\ref{eq:k}) are $0$ unless $T\subset \overline{C''_l}$ where $\overline{C''_l}$ denotes the closure of the lower sheet of a cusp component $C''$ above $h(\bfe'')$.  (Otherwise the terms on the RHS become $0$ when the $k$ factor is applied.)  Assuming $T \subset \overline{C''_l}$, the LHS of (\ref{eq:k}) applied to $T$ is 
\[
\delta(T, C'',s'') v_{C''}.
\]
In evaluating the RHS we consider cases.

\medskip

\noindent{\bf Case 1.}  $C' \subset \overline{C''}$ for some cusp component $C'$ above $h(\bfe')$.

\medskip

Then, the RHS of (\ref{eq:k}) becomes
\[
(k \circ p+\ell \circ k)(T) = k(0) + \ell(\delta(T,C',s')v_{C'}) = \delta(C',C'',s'')\delta(T, C',s') v_{C'},  
\]
so we should check that $\delta(T, C'',s'') = \delta(C',C'',s'')\delta(T, C',s')$.
 Since $T \subset \overline{C''_l}$ and $C' \subset \overline{C''}$, we have $\delta(T,C'',s'') = \delta(C',C'',s'')$ and we may assume the common value is $1$ so that $s''$ is exceptional for $C''$.  We then need to show that $\delta(T, C',s')=1$, and  (since $T \subset \overline{C'_l}$ follows as  $T \subset \overline{C''_l} \cap \overline{(h(\bfe') \times \R_z)} = \overline{C'_l}$) this amounts to checking that 
  $s'$ is also exceptional for $C'$. 

Now, because $s''$ is exceptional for $C''$, for any $x \in \pi_x(s'')$   
  and $(x,z''_l) \in C''_l, (x,z'') \in s'', (x,z''_u) \in C''_u$ we have  $z''_l < z'' \leq z''_u$.  Since $s' \subset \overline{s''}$ and $C' \subset \overline{C''}$ it follows that for any $(x,z'_l) \in C'_l, (x,z') \in s', (x,z'_u) \in C'_u$ we have  $z'_l \leq z' \leq z'_u$.  Assume for contradiction that $z'_l = z'$, then this will hold for all $x \in \pi_{x}(s')$, and since $s \subset \overline{s'}$ and $T \subset \overline{C'_l}$ we get that there exist $(x,z) \in s$ and $(x,z_l) \in T$ such that $z_l = z$.  This provides the contradiction $T \notin Y(s)$.  

\medskip

\noindent{\bf Case 2.}  $\overline{C''}$ does not contain the closure of a cusp component above $h(\bfe')$.

\medskip

Then, there is a sheet $T' \in \Lambda(\bfe')$ such that $T \subset \overline{T'} \subset \overline{C''_l}$  
so the RHS of (\ref{eq:k}) becomes
\[
(k \circ p+\ell \circ k)(T) = k(p(T)) = k(T') = \delta(T', C'',s'') v_{C''}.
\]
To verify (\ref{eq:k}), we note that in this case $\delta(T,C'',s'') = \delta(T', C'',s'')$.

 \medskip

Finally, we check (\ref{eq:l}).  For $C \in Z(s)$, we have unique cusp components $C'$ and $C''$ above $h(\bfe')$ and $h(\bfe'')$ such that $C \subset \overline{C'} \subset \overline{C''}$.  For verifying that $\delta(C, C'', s'') = \delta(C,C', s') \delta(C',C'', s'')$, we can assume $s''$ exceptional for $C''$ (otherwise both sides are $0$) and check that $s'$ is also exceptional for $C'$.  If not, then $s' \subset C'_l$ and this would imply that $s \subset C_l$ contradicting that $s$ is an exceptional stratum for $C$.

\end{proof}

The following will be useful later in checking that the combinatorial sheaf $F(\mathcal{C})$ associated to the CHD $\mathcal{C}$ belongs  to $\mathbf{Fun}^\bullet_{\Lambda}(\mathcal{S}, \mathbb{K})$.  
\begin{lemma} \label{lem:d}
If $s \rightarrow s'$ is a downward generization map (as in Definition \ref{def:downward}), then $X(s) = X(s')$ and $X(s \rightarrow s') = \mathit{id}$.
\end{lemma}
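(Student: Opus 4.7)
The plan is to show first that a downward map forces $s$ and $s'$ to lie above the same cell of $H$, and then to verify that each component of the preliminary generization map reduces to the identity. For the reduction, Definition \ref{def:downward} provides $x_0 \in \pi_x(s) \cap \pi_x(s')$; if $s \in \mathcal{S}(\bfe)$ and $s' \in \mathcal{S}(\bfe')$ then $x_0 \in h(\bfe) \cap h(\bfe')$, and since the cells of $H$ are pairwise disjoint, $\bfe = \bfe'$. Writing $\bfe = (e_I, e_J)$, the sets $\Lambda(\bfe)$ and the collections of cusp components above $h(\bfe)$ agree for the two strata, so $X(s)$ and $X(s')$ sit inside a common ambient module.

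The geometric content of the downward hypothesis is that at $x_0$, the single point $s \cap (\{x_0\} \times \R) = \{(x_0, z_0)\}$ sits at the upper boundary of an interval $I \subset s' \cap (\{x_0\} \times \R)$ containing no point of $\pi_{xz}(\Lambda)$. I will use this to establish $Y(s) = Y(s')$ and $Z(s) = Z(s')$. For any $T \in \Lambda(\bfe)$, the value $z_T(x_0)$ lies on a discrete set of sheet heights, so $z_T(x_0) < z_0$ is equivalent to $z_T(x_0) \leq \inf(I)$, which in turn is equivalent to $T$ being strictly below every point of $s'$ at $x_0$; by connectedness of $s$ and $s'$ this yields $Y(s) = Y(s')$. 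The same reasoning applied to the inequalities $z_{C_l} < z_{(\cdot)} \leq z_{C_u}$ defining exceptionality for a cusp component $C = C_0 \sqcup C_u \sqcup C_l$ gives $Z(s) = Z(s')$.

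Finally, I analyze the three components $(p, k, \ell)$ of the preliminary generization map from Definition \ref{def:maps}. Since $e_J = e_{J'}$, the projection $p(e_J \to e_{J'}): V(e_J) \to V(e_{J'})$ is the identity, and hence so is its restriction to $Y(s)$. The map $k: Y(s) \to Z(s')$ vanishes because for $T \in \Lambda(\bfe)$ and a cusp component $C$ above the \emph{same} cell $h(\bfe)$, $T$ and $C_l$ are distinct connected components of $\pi_x^{-1}(h(\bfe)) \cap \Lambda$, so $T \not\subset \overline{C_l}$ and $\delta(T, C, s') = 0$. The map $\ell: Z(s) \to Z(s')$ is the identity: each cusp component $C$ above $h(\bfe) = h(\bfe')$ is its own unique enveloping cusp, and exceptionality transfers by the equality $Z(s) = Z(s')$. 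Putting these together, $X(s \to s') = (p, k + \ell) = \mathrm{id}_{X(s)}$.

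The principal delicacy will be the equality $Z(s) = Z(s')$ in the boundary case where $s$ lies on $\pi_{xz}(C_u)$, so that the inequality $z_s \leq z_{C_u}$ is in fact an equality. Here one must rule out $s'$ dropping below $\pi_{xz}(C_l)$, which uses precisely the downward hypothesis: otherwise $\pi_{xz}(C_l)$ would provide an intervening stratum of $\pi_{xz}(\Lambda)$ strictly between $s$ and the interval $I$ inside $s'$, contradicting the absence of such strata at $x_0$.
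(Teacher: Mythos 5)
Your proof is correct and follows essentially the same route as the paper: first observe that a downward map forces $s$ and $s'$ to lie over the same cell $h(\bfe)$ of $H$, then check $Y(s)=Y(s')$ and $Z(s)=Z(s')$, and finally read off from Definition~\ref{def:maps} that $p=\mathit{id}$, $k=0$, and $\ell=\mathit{id}$. The paper's proof is terser (in particular it simply observes that the front projection of each sheet separates $h(\bfe)\times\R$ into components that are unions of strata, so ``strictly below'' is preserved), but the content matches yours, including the reason $k$ vanishes: a non-cusp sheet and a cusp component over the same cell are distinct components of $\pi_x^{-1}(h(\bfe))\cap\Lambda$, so no sheet of $\Lambda(\bfe)$ can lie in $\overline{C_l}$.
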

\begin{proof}  By definition, there is some $\bfe$ such that $s,s' \in \mathcal{S}(\bfe)$, and $s'$ is adjacent to $s$ from below.  Since the front projection of each sheet $T\in \Lambda(\bfe)$ separates $h(\bfe) \times \R_z$ into two components, each of which are unions of strata from $\mathcal{S}(\bfe)$, we have that $T$ is strictly below $s$ if and only if it is strictly below $s'$.  Thus, we have $Y(s) = Y(s')$. In addition, for a cusp component, $C$, of $\Lambda$ above $h(\bfe)$ it is clear that $s$ is an exceptional stratum for $C$ if and only if $s'$ is, so $Z(s) = Z(s')$.  That $X(s \rightarrow s')= \mathit{id}$ follows since the definitions show that $p = \mathit{id}_{Y(s)}$, $k =0$, and $\ell = \mathit{id}_{Z(s)}$.  
\end{proof}

\subsection{Defining the combinatorial sheaf}  
\label{sec:MapG}

We are now ready to construct from a CHD $\mathcal{C} = \{c(e_I)\}$ for $(\Lambda, \mu, \mathcal{E})$ a combinatorial sheaf $F(\mathcal{C}) \in \mathbf{Fun}^\bullet_{\Lambda}(\mathcal{S}, \mathbb{K})$ of the form $F(\mathcal{C}) = \mathit{Map} \circ G(\mathcal{C})$ where $G(\mathcal{C})$ is a functor  from $P(\mathcal{S})$ to the category of simplex diagrams and $\mathit{Map}$ is the mapping cylinder functor from Proposition \ref{prop:MappingFun}.

We start with the definition of $G(\mathcal{C})$ on objects. Let $s \in \mathcal{S}(\bfe)$ with $\bfe = (e_I, e_J)$ and define a simplex diagram 
\[
G(\mathcal{C})(s) = (\overline{e_I}, X(s), \{a_F\})
\]
with underlying simplex $\Delta = \overline{e_I}$ and graded $\mathbb{K}$-module $X(s)$. 
For $F \subset I \subset J$, 
the map $a_F:X(s) \rightarrow X(s)$ is a direct sum $a_F = \alpha_F \oplus \beta_F$ of maps $\alpha_F:Y(s) \rightarrow Y(s)$ and $\beta_F:Z(s) \rightarrow Z(s)$ defined as follows:
\begin{enumerate}
\item   To define $\alpha_F$ recall that the maps $c(e_F):V(e_F) \rightarrow V(e_F)$ from the CHD $\mathcal{C}$ have boundary extensions $c_J(e_F): V(e_J) \rightarrow V(e_J)$ that are lower triangular with respect to the partial ordering, $\prec$, of sheets of $\Lambda(e_J)$ by descending $z$-coordinate. 
Note that when we view $Y(s) \subset V(e_J)$ (using (\ref{eq:eJ})), if $T \in Y(s)$ has $T \prec T'$, then $T' \in Y(s)$ (since $z(T) > z(T')$ above $e_J$ implies that $z(T) > z(T')$ above $h(\mathbf{e})$).   
 Thus,  each $c_J(e_F)$ restricts to an endomorphism of the subspace  $Y(s) \subset V(e_J)$, and we can define 
\[
\alpha_F = c_J(e_F)|_{Y(s)}.
\]
\item Define
\[
\beta_F = \left\{ \begin{array}{cr} 0, &  \dim e_F \neq 1,  \\ \mathit{id}_{Z(s)}, & \dim e_F =1. \end{array} \right.
\]
\end{enumerate}
\begin{lemma}  As defined $G(\mathcal{C})(s) = (\overline{e_I}, X(s), \{a_F\})$ is a simplex diagram.
\end{lemma}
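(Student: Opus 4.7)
The plan is to decompose the verification along the direct sum $X(s) = Y(s) \oplus Z(s)$. Since $a_F = \alpha_F \oplus \beta_F$ acts diagonally, both the degree condition and the simplex identity (\ref{eq:simplex}) split, so it suffices to show that $(\overline{e_I}, Y(s), \{\alpha_F\}_{F\subset I})$ and $(\overline{e_I}, Z(s), \{\beta_F\}_{F\subset I})$ are separately simplex diagrams.

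For the $\alpha_F$ piece, the idea is to realize it as the restriction of a known simplex diagram. By the CHD axiom applied with the index $e_J$, the collection $\bigl(\overline{e_J}, V(e_J), \{c_J(e_F)\}_{F\subset J}\bigr)$ is a simplex diagram. Since the identity (\ref{eq:simplex}) for any face $F$ only involves the maps attached to $F$ and its subfaces, restricting the indexing to $F \subset I$ yields the simplex diagram $\bigl(\overline{e_I}, V(e_J), \{c_J(e_F)\}_{F\subset I}\bigr)$. The first key step is then to verify that $Y(s) \subset V(e_J)$ is invariant under each $c_J(e_F)$ with $F \subset I$. This is where the geometry of $Y(s)$ enters: by Remark \ref{remark:boundary}(2) each boundary extension $c_J(e_F)$ is (strictly) lower triangular with respect to $\prec$ on $\Lambda(e_J)$ (or differs from the identity by a strictly lower triangular map when $\dim e_F = 1$). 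As observed in the construction of $G(\mathcal{C})$, the subspace $Y(s)$ is downward-closed in $\prec$: if $T \in Y(s)$ and $T \prec T'$, then $T'$ lies below $s$ above all of $e_J$, hence also above $h(\mathbf{e})$, so $T' \in Y(s)$. Together these imply $c_J(e_F)(Y(s)) \subset Y(s)$, and restricting the ambient simplex diagram to $Y(s)$ produces the desired one with maps $\alpha_F = c_J(e_F)|_{Y(s)}$.

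For the $\beta_F$ piece, I will just compute directly, since $\beta_F$ depends only on $\dim F$ ($\mathrm{id}$ when $\dim F =1$, zero otherwise). The identity (\ref{eq:simplex}) is trivial for $\dim F = 0$ and is a short check in dimensions $1$, $2$; for $\dim F \geq 3$, both sums vanish term-by-term because no product $\beta_{i_k \ldots i_m} \circ \beta_{i_0 \ldots i_k}$ can have both factors of dimension $1$ (their dimensions sum to $m \geq 3$), and every summand $\beta_{i_0 \ldots \widehat{i_k} \ldots i_m}$ in the first sum involves a face of dimension $m-1 \geq 2$ and is therefore zero.

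The degree count is immediate: $c(e_F)$ has degree $1 - \dim e_F$ by Definition \ref{def:CHD}, this degree is preserved by the boundary extension and by restriction to $Y(s)$ (the grading on $Y(s)$ inherits from $V(e_J)$ via the bijection (\ref{eq:eJ})), so $|\alpha_F| = 1 - \dim F$; and $\beta_F$ is either zero or the identity in degree $0 = 1 - 1$. I expect no serious obstacle here: the only non-routine point is the invariance of $Y(s)$ under the boundary extensions, which is exactly the compatibility built into the definition of $Y(s)$ as those non-cusp sheets of $\Lambda(\mathbf{e})$ lying below $s$.
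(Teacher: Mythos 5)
Your proof is correct and follows the same approach as the paper: verify (\ref{eq:simplex}) separately on the $\alpha_F$ and $\beta_F$ components, getting the $\alpha_F$-identity by restricting the CHD simplex diagram $\bigl(\overline{e_J}, V(e_J), \{c_J(e_F)\}_{F\subset J}\bigr)$ to faces $F\subset I$ and then to the $\prec$-downward-closed invariant subspace $Y(s)$, and the $\beta_F$-identity by direct computation (only $\dim e_F = 2$ is nonvacuous, where the two sums cancel). Your dimension count for $\dim e_F \geq 3$ simply makes explicit what the paper's brief ``direct calculation'' leaves to the reader.
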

\begin{proof}
The components $\alpha_F$ and $\beta_F$ individually satisfy the equations from (\ref{eq:simplex}).  For the $\alpha_F$ this follows from Definition \ref{def:CHD} and for $\beta_F$ this is a direct calculation.  [For $\beta_F$, the two sums in (\ref{eq:simplex}) are both zero unless $\dim e_F = 2$, and in the latter case the first sum equals $-\mathit{id}_{Z(s)}$ while the second sum is $\mathit{id}_{Z(s)}$.]
\end{proof}

To complete the definition of $G(\mathcal{C})$, for $s\in \mathcal{S}(\bfe)$ and $s' \in \mathcal{S}(\bfe')$ with $s \leq s'$ we define a map of simplex diagrams $G(\mathcal{C})(s \rightarrow s'): G(\mathcal{C})(s) \rightarrow G(\mathcal{C})(s')$ to arise from the inclusion $\overline{e_I} \subset \overline{e_{I'}}$ together with the  map $X(s \rightarrow s'):X(s) \rightarrow X(s')$. 

\begin{lemma} As defined, $G(\mathcal{C})(s \rightarrow s')$ is a morphism of simplex diagrams.
\end{lemma}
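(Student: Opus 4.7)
The plan is to verify the three conditions of Definition \ref{def:sdmorph}: the nesting $\overline{e_I}\subset \overline{e_{I'}}$, degree preservation for $X(s\rightarrow s')$, and the intertwining with face maps. The first follows immediately because $s\leq s'$ forces $h(\bfe)\leq h(\bfe')$ and hence $e_I\leq e_{I'}$. Degree preservation is a direct check on each component of $X(s\rightarrow s')$: since the Maslov potential $\mu$ is locally constant on $\Lambda\setminus \Lambda_{\mathit{cusp}}$, the relations $T\subset \overline{T'}$, $T\subset \overline{C_l}$, and $C\subset \overline{C'}$ each preserve $\mu$-values, so $p$, $k$, and $\ell$ are each degree-preserving with respect to the conventions $|T|=-\mu(T)$ and $|v_C|=-\mu(C_l)$.

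For the face-map intertwining, observe that $a_F=\alpha_F\oplus \beta_F$ and $a'_F=\alpha'_F\oplus \beta'_F$ are block diagonal with respect to the splittings $X(s)=Y(s)\oplus Z(s)$ and $X(s')=Y(s')\oplus Z(s')$, while $X(s\rightarrow s')$ sends $Y(s)$ to $Y(s')\oplus Z(s')$ via $(p,k)$ and sends $Z(s)$ to $Z(s')$ via $\ell$. The commutation identity $X(s\rightarrow s')\circ a_F = a'_F\circ X(s\rightarrow s')$ therefore reduces block-by-block to three equations:
\[
p\circ \alpha_F = \alpha'_F\circ p, \qquad \ell\circ \beta_F = \beta'_F\circ \ell, \qquad k\circ \alpha_F = \beta'_F\circ k.
\]
The first is Lemma \ref{lem:boundary} applied to the chain $e_F\leq e_{J'}\leq e_J$ (valid since $e_F\leq e_I\leq e_{I'}\leq e_{J'}\leq e_J$), restricted to $Y(s)$. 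The second is automatic because $\beta_F$ and $\beta'_F$ depend only on $\dim e_F$ and take the common value $\mathit{id}$ (if $\dim e_F=1$) or $0$ (otherwise).

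The remaining identity $k\circ \alpha_F = \beta'_F\circ k$ is the main obstacle. Using the splitting $V(e_J) = \iota(V(e_F))\oplus V^{F,J}_{\mathit{cusp}}$, I decompose $c_J(e_F) = \tilde c(e_F) + \delta_F$, where $\tilde c(e_F)$ is the lower-triangular piece inherited from $c(e_F)$ and $\delta_F$ is the cusp extension ($d_0$, $\mathit{id}$, or $0$ depending on $\dim e_F$). In the case $\dim e_F = 1$, the identity-like contributions from both pieces yield exactly $k$ on both sides, reducing the claim to $k\circ (\tilde c(e_F) - \mathit{id}) = 0$ on $Y(s)$. In the cases $\dim e_F \in \{0\}\cup\{\geq 2\}$, where $\beta'_F=0$, one needs $k\circ c_J(e_F) = 0$, which amounts to showing that the strictly lower triangular image (and the $d_0$ cusp image, when $\dim e_F=0$) never projects onto the lower cusp sheet $C_l$ of a cusp component $C$ above $h(\bfe')$ exceptional for $s'$. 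The key geometric input is the compatibility of $\mathcal{E}$ with $\Lambda$ (cusp base curves lie in the $1$-skeleton of $\mathcal{E}$) together with the observation that target sheets in these images sit strictly below the source sheet $T\in Y(s)$ in $V(e_J)$; tracking which cusp components can line up correctly across the cell chain $h(\bfe)\leq h(\bfe')$ completes the case-by-case verification.
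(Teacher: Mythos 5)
Your reduction to three block identities---$p\circ\alpha_F=\alpha'_F\circ p$, $\ell\circ\beta_F=\beta'_F\circ\ell$, and $k\circ\alpha_F=\beta'_F\circ k$---matches the paper's equations exactly, and your dispatching of the first two (via Lemma \ref{lem:boundary} and the observation that $\beta_F$ depends only on $\dim e_F$) is correct. The added degree-preservation check is a reasonable inclusion that the paper leaves implicit.

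The treatment of the third identity, which is the genuinely hard one, has a gap. There is first a small slip: when $\dim e_F=1$, the relevant operator is $c_J(e_F)-\mathit{id}_{V(e_J)}=(c(e_F)-\mathit{id})\oplus 0$, not $\tilde c(e_F)-\mathit{id}$; the latter is $(c(e_F)-\mathit{id})\oplus(-\mathit{id})$ and is not what you want. More importantly, the actual verification that $k$ kills the image is never carried out. It splits into two genuinely different sub-arguments, and your final sentence (``tracking which cusp components can line up correctly across the cell chain'') replaces them without supplying either. Specifically: (a) the image of the $c(e_F)$-block lands inside $V(e_F)\subset V(e_{J'})$, and every element of $Y(s)$ lying in $V(e_{J'})$ extends over $h(\bfe')$ as a non-cusp sheet, hence is annihilated by $k$ by definition; (b) when $\dim e_F=0$ the cusp block $d_0$ produces $T_l\subset\overline{C_l}$ from $T=T_u\subset\overline{C_u}$, and here the vanishing $k(T_l)=0$ is not formal---one must show $s'$ cannot be exceptional for $C$, which the paper does by a limiting argument: exceptionality of $s'$ would put $s'$ weakly below $C_u$, so a sequence converging to $s$ and to $T\subset\overline{C_u}$ gives $z\leq z_u$, contradicting $T\in Y(s)$. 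Neither observation appears in your write-up. For comparison, the paper organizes the same case analysis by splitting the \emph{element} $T$ along $V(e_J)=V(e_{J'})\oplus V^{J',J}_{\mathit{cusp}}$ rather than splitting the operator $c_J(e_F)$; the two routes end up proving the same two facts, so your route would also work once the missing arguments are filled in.
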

\begin{proof}
Note that since $h(\bfe) \leq h(\bfe')$ implies $e_{I} \leq e_{I'}$, the underlying simplex $\overline{e_I}$ 
 of $G(\mathcal{C})(s)$ is indeed a face of that of $G(\mathcal{C})(s')$ as required in Definition \ref{def:sdmorph}.
 For any $F \subset I$ we need to check 
\[
X(s\rightarrow s') \circ \left(c_J(e_F)|_{Y(s)} \oplus \beta_F\right) \,= \, \left(c_{J'}(e_F)|_{Y(s')} \oplus \beta_F\right) \circ X(s \rightarrow s').
\]
This amounts to verifying that for any  sheet  $T \in \Lambda(e_J)$ with $T \in Y(s)$  and any $v_C \in Z(s)$
\begin{align}
p(e_J \rightarrow e_{J'}) \circ c_J(e_F)(T) & = c_{J'}(e_F) \circ p(e_J \rightarrow e_{J'})(T) \\
k(s \rightarrow s') \circ c_J(e_F)(T) & = \beta_F\circ k(s \rightarrow s')( T)  \label{eq:k2} \\
\ell(s \rightarrow s') \circ \beta_F (v_C) & = \beta_F \circ \ell(s \rightarrow s') (v_C).
\end{align}
The first identity is Lemma \ref{lem:boundary}, while the last identity is trivial since $\beta_F$ is $0$ or $\mathit{id}$.

To prove (\ref{eq:k2}), since $e_{J} \geq e_{J'}$ we can write $V(e_J) = V(e_{J'}) \oplus V^{J',J}_{\mathit{cusp}}$ and consider cases.

\medskip

\noindent {\bf Case 1.}  $T \in V(e_{J'})$.  In this case, both sides of (\ref{eq:k2}) are $0$.  [This is because $k(s \rightarrow s')$    is $0$ on the subspace $V(e_{J'})$ of $V(e_{J})$ and $c_J(e_F)$ preserves this subspace.] 

\medskip

\noindent{\bf Case 2.}  $T \in V^{J',J}_{\mathit{cusp}}$.  If $\dim e_F \geq 1$, then the $V^{J',J}_{\mathit{cusp}}$ component of $c_J(e_F)$  agrees with $\beta_F$ and moreover (being either $0$ of $\mathit{id}$, see (\ref{eq:boundary})) commutes with $k(s \rightarrow s')$ so that the identity follows.  Assuming that $\dim e_F =0$ the RHS of (\ref{eq:k2}) is 0 and the $V^{J',J}_{\mathit{cusp}}$ component of $c_J(e_F)$ is $d_0$ (as in (\ref{eq:boundary})).  Thus, $c_J(e_F)(T) = 0$ unless there is a cusp component $C = C_0 \sqcup C_u \sqcup C_l$ above $h(\bfe')$ such that $T \subset \overline{C_u}$.  In this case, $c_J(e_F)(T)= T_l \subset \overline{C_l}$, 
  but  $k(s \rightarrow s') (T_l) =0$ because  $s'$ cannot be an exceptional stratum for $C$.  [If $s'$ were exceptional for $C$, then for any $(x',z') \in s'$ and $(x',z'_u) \in C_u$ we have $z' \leq z'_u$.  Taking a sequence with $(x'_n,z'_n) \rightarrow (x,z) \in s$ and $(x_n',(z'_u)_n) \rightarrow (x,z_u) \in T$ we see that $z \leq z_u$, contradicting that $T \in Y(s)$.] 

\end{proof} 

\begin{proposition}
The construction $G(\mathcal{C}):P(\mathcal{S}) \rightarrow \mathfrak{sd}$ is a functor from $P(\mathcal{S})$ to the category of simplex diagrams.
\end{proposition}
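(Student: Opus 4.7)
The plan is to verify the two defining axioms of a functor: preservation of identities and preservation of composition. The two lemmas immediately preceding the proposition already establish that $G(\mathcal{C})$ is well-defined on objects (each $G(\mathcal{C})(s)$ is a simplex diagram) and on morphisms (each $G(\mathcal{C})(s\rightarrow s')$ is a morphism of simplex diagrams), so only these two compatibility conditions remain.

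For identities, I would simply observe that $G(\mathcal{C})(s\rightarrow s)$ is by definition the morphism of simplex diagrams determined by the inclusion $\overline{e_I}\subset \overline{e_I}$ and the linear map $X(s\rightarrow s):X(s)\rightarrow X(s)$. Proposition \ref{prop:XFunctor} states that $X$ is a functor, so $X(s\rightarrow s)=\mathit{id}_{X(s)}$, and hence $G(\mathcal{C})(s\rightarrow s)$ is the identity morphism of the simplex diagram $G(\mathcal{C})(s)$.

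For composition, consider $s\leq s'\leq s''$ with $s\in\mathcal{S}(\bfe)$, $s'\in\mathcal{S}(\bfe')$, $s''\in\mathcal{S}(\bfe'')$ where $\bfe=(e_I,e_J)$, etc.  Recall from Definition \ref{def:sdmorph} that a morphism of simplex diagrams is specified by (i) a face inclusion of the underlying simplices and (ii) a degree zero linear map on the underlying graded $\mathbb{K}$-modules commuting with all face maps, and that composition of morphisms in $\mathfrak{sd}$ is given by composition of the underlying linear maps paired with composition of the face inclusions. Since $h(\bfe)\leq h(\bfe')\leq h(\bfe'')$ forces $e_I\leq e_{I'}\leq e_{I''}$, the chain of face inclusions $\overline{e_I}\subset \overline{e_{I'}}\subset \overline{e_{I''}}$ is consistent, and on the level of underlying modules we have the equality $X(s\rightarrow s'')=X(s'\rightarrow s'')\circ X(s\rightarrow s')$ supplied by Proposition \ref{prop:XFunctor}. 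Combining these two facts gives
\[
G(\mathcal{C})(s\rightarrow s'')=G(\mathcal{C})(s'\rightarrow s'')\circ G(\mathcal{C})(s\rightarrow s'),
\]
which completes the verification.

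There is no substantive obstacle here: all the real work was already done in proving Proposition \ref{prop:XFunctor} (the functoriality of $X$ via the case analysis for the maps $p$, $k$, $\ell$) and in the two lemmas immediately above that check $G(\mathcal{C})(s)$ and $G(\mathcal{C})(s\rightarrow s')$ are objects and morphisms of $\mathfrak{sd}$. The present proposition is essentially a bookkeeping consequence of those results together with the observation that morphisms in $\mathfrak{sd}$ compose by composing underlying maps.
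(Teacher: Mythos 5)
Your proof is correct and takes essentially the same route as the paper, which simply observes that preservation of identities and compositions follows from the functoriality of $X$ (Proposition \ref{prop:XFunctor}) together with the two preceding lemmas. You spell out the same argument in more detail, including the compatibility of the face inclusions, but the content is identical.
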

\begin{proof}
That compositions and identities are preserved follows since $X$ is a functor by Proposition \ref{prop:XFunctor}.
\end{proof}

With $G(\mathcal{C})$ defined we now set the combinatorial sheaf $F(\mathcal{C}) = \mathit{Map} \circ G(\mathcal{C})$.

\begin{proposition}
\label{prop:FisFun}
We have $F(\mathcal{C}) \in \mathbf{Fun}_{\Lambda}^\bullet(\mathcal{S}, \mathbb{K})$.
\end{proposition}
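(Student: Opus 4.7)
The plan is to verify each of the three conditions of Definition \ref{def:fun} for $F(\mathcal{C}) = \mathit{Map} \circ G(\mathcal{C})$. For condition (1), any downward generization map $s \to s'$ has both strata lying over a common cell $h(\bfe)$ of $H$: the downward point $x_0$ sits in $\pi_x(s) \cap \pi_x(s') \subset h(\bfe) \cap h(\bfe')$, and distinct cells of $H$ are disjoint. By Lemma \ref{lem:d}, $X(s) = X(s')$ and $X(s \to s') = \mathit{id}$, and since $G(\mathcal{C})(s)$ and $G(\mathcal{C})(s')$ share the underlying simplex $\overline{e_I}$ (where $\bfe = (e_I, e_J)$), the morphism $G(\mathcal{C})(s \to s')$ is the identity morphism of simplex diagrams; applying $\mathit{Map}$ then yields an isomorphism, in particular a quasi-isomorphism.

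For condition (2), let $s \leq s'$ lie in a common $\Lambda^F_k$, with $\bfe = (e_I, e_J)$ and $\bfe' = (e_{I'}, e_{J'})$, so $e_I \leq e_{I'}$ and $e_{J'} \leq e_J$. I would apply Corollary \ref{cor:vertex} to $G(\mathcal{C})(s \to s')$. The edge-quasi-isomorphism hypothesis is immediate from the CHD definition, since each $c(e_F)$ with $\dim e_F = 1$ equals $\mathit{id}$ plus strictly lower triangular, hence is invertible after boundary extension and restriction to $Y(-) \oplus Z(-)$. It remains to select a vertex $e_F \subset \overline{e_I}$ and show that the vertex map
\[
X(s \to s'):\bigl(Y(s) \oplus Z(s),\,c_J(e_F)|_{Y(s)} \oplus 0\bigr) \longrightarrow \bigl(Y(s') \oplus Z(s'),\, c_{J'}(e_F)|_{Y(s')} \oplus 0\bigr)
\]
is a quasi-isomorphism. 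The kernel of the projection component $p: Y(s) \to Y(s')$ is exactly $Y(s) \cap V^{J',J}_{\mathit{cusp}}$, and on this subspace the vertex differential $c_J(e_F)$ restricts to the cusp differential $d_0$ (since each cusp pair over $e_{J'}$ remains a cusp pair over the subface $e_F \leq e_{J'}$), producing an acyclic subcomplex of upper-to-lower cusp mapping cones. The treatment of the $Z$-components and the maps $k, \ell$ reduces to verifying preservation of exceptionality for cusp components within a single $\Lambda^F_k$: that is, if $s$ is exceptional for some cusp $C$ above $h(\bfe)$, then $s'$ is exceptional for the unique cusp $C' \supseteq C$ above $h(\bfe')$. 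This is done by a case-by-case check against the stratum types enumerated in Section \ref{sec:list}, and is the most delicate part of the proof.

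For condition (3), the four strata $O, NW, NE, N$ at a crossing all lie over a single 2-cell $h(\bfe)$ with $\bfe = (e_J, e_J)$ for the $1$-simplex $e_J$ carrying the projection of the crossing arc, so the associated simplex diagrams share the base simplex $\overline{e_J}$. Letting $T_i, T_j \in \Lambda(\bfe)$ be the two crossing sheets, a direct computation (using that at the crossing $z(T_i) = z(T_j)$ while on the two sides of the crossing arc the strict orderings are opposite) gives $X(O) = Y(O)$, $X(NW) = Y(O) \oplus \mathbb{K} T_j$, $X(NE) = Y(O) \oplus \mathbb{K} T_i$, and $X(N) = Y(O) \oplus \mathbb{K} T_i \oplus \mathbb{K} T_j$, with $Z(-) = 0$ throughout and all generization maps being inclusions. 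I would then verify that these assemble into a short exact sequence of simplex diagrams
\[
0 \to G(\mathcal{C})(O) \to G(\mathcal{C})(NW) \oplus G(\mathcal{C})(NE) \to G(\mathcal{C})(N) \to 0,
\]
with face-map compatibility following because each face map is a restriction of the same boundary extension $c_J(e_F)$ to nested subspaces of $V(e_J)$. Since $\mathit{Map}$ preserves split short exact sequences of simplex diagrams over a common underlying simplex, this descends to a short exact sequence of cochain complexes, whose total complex---coinciding with that of (\ref{eq:NW})---is thus acyclic.
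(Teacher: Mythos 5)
Your treatment of condition (1) matches the paper's, and your short-exact-sequence idea for condition (3) is a nice reformulation, but the argument as written has two genuine gaps.

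\textbf{Condition (2).} After restricting to a vertex via Corollary \ref{cor:vertex}, you claim that $\ker p = Y(s)\cap V^{J',J}_{\mathit{cusp}}$ is an acyclic subcomplex of ``upper-to-lower cusp mapping cones.'' This is false in general. A cusp pair $\{T_u, T_l\}$ of non-cusp sheets above $h(\bfe)$ that merges into a cusp component $C'$ above $h(\bfe')$ need not have \emph{both} members in $Y(s)$: precisely when $s'$ is an exceptional stratum for $C'$, one has $T_l \in Y(s)$ but $T_u \notin Y(s)$ (see Figure \ref{fig:cusp} for an instance). Then $T_l$ is a lone generator of $\ker p$ that the differential $d_0$ sends to $0$, so $\ker p$ carries a nontrivial homology class. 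The resolution, missing from your proposal, is that $T_l$ does \emph{not} lie in $\ker X(s\to s')$ because the $k$-component sends $T_l \mapsto v_{C'} \in Z(s')$. What one needs is the finer decomposition $Y(s) = Y_1 \oplus Y_2 \oplus Y_3$ (where $Y_2$ is spanned by \emph{complete} cusp pairs in $Y(s)$ and $Y_3$ by lone lower sheets of partial pairs), identify $\ker X(s\to s') = Y_2$ (this \emph{is} a direct sum of acyclic cones), and show $X(s\to s')$ is surjective with $Y_1 \cong Y(s')$, $Z(s) \cong Z_D$, $Y_3 \cong Z_C$. Your sentence about ``preservation of exceptionality'' gestures at the $\ell$-component but doesn't account for the new exceptional strata arising via $k$ from $Y_3$, and the claimed acyclicity of $\ker p$ contradicts this mechanism.

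\textbf{Condition (3).} You assume the crossing 1-stratum $O$ lives over $h(\bfe)$ with $\bfe = (e_J, e_J)$ for a \emph{1-simplex} $e_J$, and deduce $Z(-)=0$ throughout. But the base projection $\Lambda^B_1$ of the crossing locus lies in the 1-skeleton of $\mathcal{E}$ and passes through vertices, so crossing arcs also appear above the 2-dimensional 0-handles $h(e_{i_0}, e_{i_0})$ dual to vertices $e_{i_0}$. In that $\dim e_I = 0$ case, near a cusp-sheet intersection point, one of $NW, NE$ can lie on a cusp branch and then $Z(\cdot)\neq 0$ for some of the four strata; the local picture is not the one you describe, and the short exact sequence you write does not directly apply. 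Your SES argument is sound in the cusp-free situations (and is a clean alternative to the paper's repeated quotient-by-acyclic-subcomplex argument in those cases), but it must be supplemented: you need to treat $\dim e_I = 0$, and within it the subcase where the crossing involves a cusp branch, where after killing the $Y$-parts one is left with a residual complex in the $Z$-components that must be checked to be acyclic separately.
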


\begin{proof}
We check that $F(\mathcal{C})$ satisfies the conditions (1)-(3) from Definition \ref{def:fun}.

\medskip

\noindent{\bf Checking (1) of Definition \ref{def:fun}:}  Already for $G(\mathcal{C})$ all downward generization maps are the identity.  [This follows from Lemma \ref{lem:d}, and the fact that when $s \rightarrow s'$ is downward $s$ and $s'$ will belong to $\mathcal{S}(\bfe)$ for a common $\bfe$ so that $e_I = e_{I'}$.]

\medskip

\noindent{\bf Checking (2) of Definition \ref{def:fun}:}  For condition (2), suppose $s \rightarrow s'$ is a generization map with $s \in \mathcal{S}(\bfe)$, $s' \in \mathcal{S}(\bfe')$, such that $s, s' \in \Lambda^F_k$ for some $-1 \leq k \leq 2$ (where $\sqcup_{k=-1}^2\Lambda^F_k$ is the $\Lambda^F$-stratification of $M \times \R$; see (\ref{eq:xzstrat}).) 
  By Corollary \ref{cor:vertex}, it suffices to check that for some $i_0 \in I \subset I'$, 
$X(s \rightarrow s')$ induces a quasi-isomorphism from $\left(X(s), d^s_{i_0}\right)$ to $\left(X(s'), d^{s'}_{i_0}\right)$ where $d^s_{i_0}$ and $d^{s'}_{i_0}$ are the maps associated to the vertex $e_{i_0}$ in the simplex diagrams $G(\mathcal{C})(s)$ and $G(\mathcal{C})(s')$.  These maps are obtained from the CHD $\mathcal{C} = \{c(e_I)\}$ as 
\[
d^s_{i_{0}}=c_J(e_{i_0})|_{Y(s)} \oplus 0_{Z(s)}  \quad \mbox{and} \quad d^{s'}_{i_{0}}=c_{J'}(e_{i_0})|_{Y(s')} \oplus 0_{Z(s')}.
\]   

As a preliminary we make some observations about the cusp components.  The cusp components of $\Lambda$ above $h(\bfe')$ can be divided into three disjoint types:
\begin{enumerate}
\item[(A)] Those whose closure does not intersect $\pi_{x}^{-1}(h(\bfe))$.
\item[(B)] Those whose closure (viewed as a subset of $\Lambda$)
 intersects $\pi^{-1}_x(h(\bfe))$ in a cusp component of $\Lambda$ above $h(\bfe)$.  We enumerate these cusp components above $h(\bfe')$ as $\{D_1', \ldots, D_m'\}$ and the corresponding cusp components above $h(\bfe)$ as $\{D_1, \ldots, D_m\}$ so that $D_i \subset \overline{D_i'}$.  
\item[(C)] Those whose closure intersects $\pi^{-1}_x(h(\bfe))$ in two non-cusp sheets of $\Lambda$.  We denote the cusp components as $\{C_1', \ldots, C_n'\}$ and the corresponding pairs of sheets as $\{T^1_u, T^1_l, \ldots, T^n_u, T^n_l\}$, so that $T^i_u \subset \overline{(C_i')_u}$ and $T^i_l \subset \overline{(C_i')_l}$ where $(C_i')_u$ and $(C_i')_l$ denote the upper and lower branches of $C_i'$.   
\end{enumerate}

\medskip

The following properties of the cusp components are verified using the hypothesis that $s,s' \in \Lambda_k$.  

\begin{enumerate}
\item[(i)] The stratum $s'$ is never exceptional for cusp components from (A).  [Since $s'$ is adjacent to $s\in \mathcal{S}(\bfe)$ it cannot sit between the upper and lower sheets of a cusp component that does not intersect $\pi_x^{-1}(h(\bfe))$.]

\item[(ii)] Any cusp component above $h(\bfe)$ belongs to $\{D_1, \ldots, D_m\}$. 
 [By construction of the handle decomposition $H$, projections of cusp edges do not intersect $0$-cells and only intersect $1$-cells in their interior.  As a consequence, if a cusp edge appears above $h(\bfe)$ then it also appears above any $h(\bfe')$ with $h(\bfe) \leq h(\bfe')$.] 
 
\item[(iii)] For any $1 \leq i \leq m$, we have that $s$ is an exceptional stratum for $D_i$ if and only if $s'$ is an exceptional stratum for $D'_i$.  [This uses that $s,s' \in \Lambda_k$.]  

\item[(iv)]  For any $1 \leq i \leq n$, $s'$ is exceptional for $C_i'$ if and only if $T^i_l \in Y(s)$ and $T^i_u \notin Y(s)$.  [Since $s,s' \in \Lambda_k$,   $s$ and $s'$ are either both strictly above $\overline{(C'_i)_u}$ or both weakly below $\overline{(C'_i)_u}$.  A similar remark applies to $\overline{(C'_i)_l}$.  Thus, $s'$ is exceptional for $C_i'$ if and only if $s$ is weakly below $\overline{(C'_i)_u}$ and strictly above $\overline{(C'_i)_l}$.  The latter two conditions are equivalent to having $T^i_u \notin Y(s)$ and $T^i_l \in Y(s)$.]
\end{enumerate}

We can then write 
\[
Y(s) = Y_1 \oplus Y_2 \oplus Y_3  
\]
where $Y_1$ is spanned by those sheets in $Y(s)$ that belong to the closure of non-cusp sheets above $h(\bfe')$; $Y_2$ is spanned by the union of subsets $\{T^i_u, T^i_l\}$ such that $T^i_u \in Y(s)$ and $T^i_l \in Y(s)$; and  
\[
Y_3 = \mbox{Span} \{T^i_l\,|\, T^i_l \in Y(s) \, \mbox{and} \, T^i_u \notin Y(s)\}.
\]
Moreover, (using (i)) we can write
\[
Z(s') = Z_D \oplus Z_C
\]
where $Z_D$ (resp. $Z_C$) is spanned by $v_{D'_i}$ (resp. $v_{C'_i}$) such that $s'$ is exceptional for $D'_i$ (resp. for $C_i'$).

Now from the definition of  $X(s \rightarrow s')=(p, k +\ell)$ (see Definition \ref{def:maps}) we note the following:
\begin{itemize}
\item  $X(s \rightarrow s')$  maps $Y_1\stackrel{\cong}{\rightarrow} Y(s')$.  [The hypothesis that $s,s' \in \Lambda^F$ is used here to see that the sheets of $\Lambda(\bfe')$ below $s'$ are precisely those sheets whose closure contains the sheets of $\Lambda(\bfe)$ below $s$.]  
\item $X(s\rightarrow s')$ maps $Z(s) \stackrel{\cong}{\rightarrow} Z_D$.  [This uses (ii) and (iii).]
\item $X(s\rightarrow s')$ maps $Y_3 \stackrel{\cong}{\rightarrow} Z_C$.  [This uses (iv).]
\item $\ker X(s\rightarrow s') = Y_2$.  
[This uses (iv) and the previous three observations.]
\end{itemize}
Thus, $X(s\rightarrow s')$ is surjective with kernel $Y_2$.  Moreover, $Y_2$ is an acyclic subcomplex. [The definition of the boundary extensions $c_I(e_F)$ shows that $d^s_{i_0} T^i_u = T^i_l$ for all $T^i_u,T^i_l \in Y_2$.]   It follows that $X(s \rightarrow s')$ is a quasi-isomorphism.

\medskip

\noindent {\bf Checking (3) of Definition \ref{def:fun}:}  Suppose that $O, NE, NW, N \in \mathcal{S}(\bfe)$ are strata as described in (3) of Definition \ref{def:fun}.  We have $\dim h(\bfe) = 2$, and by construction $\bfe = (e_I, e_I)$ where $e_I \in \mathcal{E}$ has dimension $0$ or $1$.  I.e., $h(\bfe)$ is the interior of either a $0$-handle or $1$-handle of $H$ corresponding to either a vertex or edge of $\mathcal{E}$.  See Figure \ref{fig:HLabel}.

\medskip

\noindent{\bf Case 1:}  $\dim e_I = 0$.  

\medskip

\noindent {\bf Subcase A:}  Both $NE$ and $NW$ belong to non-cusp sheets above $h(\bfe)$.  Number the sheets of $\Lambda(\bfe) \cong \Lambda(e_I)$ as $\{S_1, \ldots, S_k, S_{k+1}, \ldots, S_n\}$ so that in the front projection $S_k$ and $S_{k+1}$ intersect  along $O$;  $z(S_k)> z(S_{k+1})$ where $NW$ exists; and $z(S_{k+1})> z(S_k)$ where $NE$ exists.  Remaining sheets are numbered with $S_1,\ldots, S_{k-1}$ appearing above $O$ and $S_{k+2}, \ldots, S_n$ appearing below $O$. 
 For any of the strata $s \in \{O, NE, NW, N\}$ we have $Z(s)=0$, and so the commutative diagram (\ref{eq:NW}) obtained from applying $F(\mathcal{C})$ to these strata becomes
\[
\xymatrix{ 
 & \mbox{Span}_\mathbb{K} \{S_k,S_{k+1}, S_{k+2}, \ldots, S_{n}\} & \\
\mbox{Span}_\mathbb{K} \{S_{k+1}, S_{k+2}, \ldots, S_{n}\} \ar[ru] & & \mbox{Span}_\mathbb{K} \{S_k, S_{k+2}, \ldots, S_{n}\} \ar[lu] \\
& \mbox{Span}_\mathbb{K} \{S_{k+2}, \ldots, S_{n}\} \ar[ru]  \ar[lu] &
} 
\]
where all arrows are inclusions and all differentials are  induced by $d=c(e_I):V(e_I) \rightarrow V(e_I)$ where $\{c(e_I)\} = \mathcal{C}$ is the CHD.
    Quotienting by the sequence of acyclic sub-complexes, $\mathit{Tot}^\cdot\left(F(NE) \rightarrow \mbox{Im}\left( F(NE)\rightarrow F(N) \right)\right)$, then $\mathit{Tot}^\cdot\left( F(O) \rightarrow \mbox{Im}\left(F(O) \rightarrow F(NW)\right)\right)$ leaves the acyclic complex $\mathit{Tot}^\cdot\left( \mbox{Span}\{S_{k+1}\} \rightarrow \mbox{Span}\{S_{k+1}\}\right)$ (with the $S_{k+1}$ in $F(NW)$ and $F(N)$ respectively).

\medskip

\noindent {\bf Subcase B:} One of $NE$ and $NW$ belongs to a cusp component $C$ of $\Lambda$ above $h(\bfe)$.  (Note that this case only occurs when $e_I \in \mathcal{E}$ sits below a cusp-sheet intersection point of the front projection; see Figure \ref{fig:FrontSing}.) 
  We may assume $NE$ belongs to a cusp component.  Then, the maps $F(NE) \rightarrow F(N)$ and $F(O) \rightarrow F(NW)$ restrict to isomorphisms 
 of sub-complexes $Y(NE) \stackrel{\cong}{\rightarrow} Y(N)$ and $Y(O) \stackrel{\cong}{\rightarrow} Y(NW)$.  Quotienting the total complex by the corresponding sequence of acyclic sub-complexes leaves just the $Z(s)$ components and they have the form
\[
\xymatrix{ 
 & \mbox{Span}_\mathbb{K} \{v_C\} & \\
\mbox{Span}_\mathbb{K} \{v_C\} \ar[ru] & &  0 \ar[lu] \\
& 0 \ar[ru]  \ar[lu] &
}  \quad \raisebox{-1.5cm}{\mbox{or}} \quad
\xymatrix{ 
 & 0 & \\
0 \ar[ru] & &  \mbox{Span}_\mathbb{K} \{v_C\} \ar[lu] \\
& \mbox{Span}_\mathbb{K} \{v_C\} \ar[ru]  \ar[lu] &
}
\]
depending on whether $NE$ belongs to the lower or upper branch of $C$.

\medskip

\noindent{\bf Case 2:}  $\dim e_I = 1$.  In this case, the $1$-cell $e_I=e_{i_0i_1}$ must sit below a single crossing arc of $\pi_{xz}(\Lambda)$.  For each $s \in \{O, NE, NW, N\}$, the preliminary vector space is $X(s) = Y(s)$ and we can label sheets as in Case 1A.  Then, $F(s)$ is the mapping cylinder of the simplex diagram on $e_I$ corresponding to the chain map $f:(X(s), d_-) \rightarrow (X(s), d_+)$ where $f,d_-, d_+$ are induced by the maps $c(e_I), c_I(e_{i_0})$ and $c_I(e_{i_1})$ from the CHD $\mathcal{C}$.  One can then establish acyclicity of the total complex by arguing as in Case 1A where the final acyclic complex is now 
\[
\mathit{Tot}^\cdot\left( \begin{array}{c} \mbox{Span}\{S_{k+1}\} \oplus  \mbox{Span}\{S_{k+1}\}[-1] \oplus  \mbox{Span}\{S_{k+1}\} \quad \quad \quad \\  \quad \quad \quad \rightarrow\mbox{Span}\{S_{k+1}\} \oplus  \mbox{Span}\{S_{k+1}\}[-1] \oplus  \mbox{Span}\{S_{k+1}\} \end{array} \right).
\]
Alternatively, one can observe that the total complex of the $F(s)$ is itself the mapping cylinder of a simplex diagram corresponding to the map of total complexes of the $(X(s), d_-)$ and $(X(s), d_+)$ induced by $f$.  Moreover, for this simplex diagram, the complex associated to a vertex is exactly the total complex from Case 1A, and hence Proposition \ref{prop:vertex} shows that  the mapping cylinder of this simplex diagram is acyclic.

\end{proof}

Together with Proposition \ref{prop:CHDbiject} and Corollary \ref{cor:Equiv}, this completes the construction of sheaves from  augmentations as stated in Theorem \ref{thm:main}.  Explicitly, we have map from augmentations of the simplicial DGA to $\mathbf{Fun}^\bullet_\Lambda(\mathcal{S}, \mathbb{K})$ sending $\epsilon:\mathcal{A}(\Lambda, \mathcal{E})\rightarrow \mathbb{K}$ to $F(\mathcal{C}_\epsilon)$ where $\mathcal{C}_\epsilon$ is the CHD corresponding to $\epsilon$ under the bijection from Proposition \ref{prop:CHDbiject}.  Since $\Gamma_{\mathcal{S}}: \mathbf{Sh}^\bullet_{\Lambda}(\mathcal{S}, \mathbb{K}) \rightarrow \mathbf{Fun}^\bullet_\Lambda(\mathcal{S}, \mathbb{K})$ is a quasi-equivalence, there exists a complex of sheafs $\mathcal{F} \in  \mathbf{Sh}^\bullet_{\Lambda}(\mathcal{S}, \mathbb{K})$, unique up to isomorphism in $\mathbf{Sh}^\bullet_{\Lambda}(\mathcal{S}, \mathbb{K})$, such that $\Gamma_{\mathcal{S}}(\mathcal{F})$ is isomorphic to $F(\mathcal{C}_\epsilon)$ in $\mathbf{Fun}^\bullet_\Lambda(\mathcal{S}, \mathbb{K})$.

\subsection{Closing remarks}
As usual, cf. \cite{STZ, NRSSZ}, the sheaves constructed from augmentations in Theorem 1.1 have microlocal rank $1$, i.e. at co-vectors $(q,\xi) \in SS(\mathcal{F})$ the Morse group, $Mo_{q,f}(\mathcal{F})$, from (\ref{eq:MorseGroup}) has rank $1$.  [For the corresponding combinatorial sheaves, at smooth points of $\pi_{xz}(\Lambda)$, the Morse group is the cone of the {\it upward} generization map.]  However, a modification of the construction from Theorem \ref{thm:main} can be used to produce sheaves with micro-local rank $n>1$ from $n$-dimensional representations of the simplicial DGA.  Specifically, one should carry out the constructions of Sections \ref{sec:CHD} and \ref{sec:SheafConstruction} but, in the definition of the vector spaces $V(e_I)$ and $X(s) = Y(s) \oplus Z(s)$ replace the generators associated to sheets $S_i$ of $\Lambda$ (or, in the case of the $Z(s)$, with cusp components $C$) with $n$-dimensional subspaces $W_{S_i}$ (or $W_{C}$).  

At least in the case of $\Z/2$-coefficients and $M= \R^2$, it is reasonable to conjecture that the map from augmentation to sheaves constructed may be essentially surjective onto the sub-category $\mathcal{C}_1(\Lambda, \mathbb{K}) \subset \ShLambda$ of micro-local rank $1$ sheaves with acyclic stalks at $\infty$.  However, in the case of higher rank representations, this is likely to not be the case, since the constructed sheaves will have trivial microlocal monodromy, cf. \cite{STZ, KS}.  To obtain sheaves with non-trivial micro-local monodromy one may be able to expand the constructions of Section \ref{sec:CHD} and \ref{sec:SheafConstruction}  
 to allow the subspaces $W_{S_i}$ to arise from a non-trivial (combinatorial) local system of vector spaces on $\Lambda$.  In terms of DGA representations, this would require a version of the simplicial DGA with (fully non-commutative) $\Z[\pi_{1}(\Lambda)]$-coefficients; see eg. \cite{CDGG} for discussion of the Legendrian contact homology DGA with $\Z[\pi_1(\Lambda)]$-coefficients.


\begin{thebibliography}{99}

\bibitem{AlvarezGavela} D. Alvarez-Gavela, \textsl{The simplification of singularities of Lagrangian and Legendrian fronts},  Invent. Math. 214 (2018), no. 2, 641--737.

\bibitem{ABS}  B. H. An, Y. Bae, T. Su, \textsl{Augmentations and ruling polynomials for Legendrian graphs}, arXiv:1911.11563.

\bibitem{BEE}
F. Bourgeois, T. Ekholm and Y. Eliashberg, \textsl{Effect of Legendrian surgery},  Geom. Topol. 16 (2012), no. 1, 301--389.


\bibitem{CasalsMurphy}
R. Casals and E. Murphy, with appendix by K. Sackel, \textsl{Differential algebra of cubic graphs}, Adv. Math. 338 (2018), 401--446. 

\bibitem{CDGG} B. Chantraine, G. Dimitroglou Rizell, P. Ghiggini, R. Golovko,  \textsl{Noncommutative augmentation categories}, Proceedings of the G{\"o}kova Geometry-Topology Conference 2015, 116–150, G{\"o}kova Geometry/Topology Conference (GGT), G{\"o}kova, 2016.

\bibitem{CNS}  B. Chantraine, L. Ng, S. Sivek, \textsl{Representations, sheaves and Legendrian $(2,m)$-torus links}.
J. Lond. Math. Soc. (2) 100 (2019), no. 1, 41--82. 

\bibitem{Ch} Y. Chekanov, \textsl{Differential algebra of Legendrian links}, Invent. Math. 150 (2002), no. 3, 441--483.

\bibitem{Drinfeld} V. Drinfeld, \textsl{DG quotients of DG categories}, J. Algebra, 272(2):643--691, 2004

\bibitem{Ekh}  T. Ekholm, \textsl{Holomorphic curves for Legendrian surgery}, arXiv:1906.07228.  

\bibitem{EL}  T. Ekholm and Y. Lekili, \textsl{Duality between Lagrangian and Legendrian invariants}, arXiv:1701.01284. 

\bibitem{EES05a}   T. Ekholm, J. Etnyre,  M. Sullivan, \textsl{The contact homology of Legendrian submanifolds in $\R^{2n+1}$}, J. Differential Geom. 71 (2005), no. 2, 177--305. 

\bibitem{EES07} T. Ekholm, T. Etnyre, M. Sullivan,  \textsl{Legendrian contact homology in $P \times \R$}, Trans. Amer. Math. Soc. 359 (2007), no. 7, 3301--3335.

\bibitem{ENS}  T. Ekholm, L. Ng, V. Shende, \textsl{A complete knot invariant from contact homology}, Invent. Math. 211 (2018), no. 3, 1149--1200. 

\bibitem{Entov} M. Entov, \textsl{Surgery on Lagrangian and Legendrian Singularities}, Geometric and Functional Analysis 9:2 (1999), 298--352.

\bibitem{Gao} H. Gao, \textsl{Simple Sheaves for Knot Conormals}, arXiv:1805.00914. 


\bibitem{GPS} S. Ganatra, J. Pardon, V. Shende, \textsl{Microlocal Morse theory of wrapped Fukaya categories}, arXiv:1809.08807.

\bibitem{Gu19} S. Guillermou, \textsl{Sheaves and symplectic geometry of cotangent bundles},  arXiv:1905.07341.

\bibitem{GKS}  S. Guillermou, M. Kashiwara, P. Schapira,
\textsl{Sheaf quantization of Hamiltonian isotopies and applications to nondisplaceability problems},
Duke Math. J. 161 (2012), no. 2, 201--245.



\bibitem{KS} M. Kashiwara, P. Schapira, \textsl{Sheaves on Manifolds}, Grundlehren der Mathematischen Wissenschafte 292, Springer-Verlag (1994).

\bibitem{Nadler} D. Nadler \textsl{Microlocal branes are constructible sheaves}, Selecta Math. 15 (2009) 563--619.

\bibitem{NadlerZ}  D. Nadler and E. Zaslow, \textsl{Constructible sheaves and the Fukaya category},
J. Amer. Math. Soc. 22 (2009), no. 1, 233--286.

\bibitem{NRSSZ} L. Ng, D. Rutherford, V. Shende, S. Sivek  E. Zaslow, \textsl{Augmentations are Sheaves}, 
arXiv:1502.04939.

\bibitem{Massey} 
D. Massey,  \textsl{Stratified Morse theory: past and present}, Pure Appl. Math. Q. 2 (2006), no. 4, Special Issue: In honor of Robert D. MacPherson. Part 2, 1053--1084. 

\bibitem{RiveraZeinalian} M. Rivera, M. Zeinalian, \textsl{Cubical rigidification, the cobar construction, and the based loop space}  Algebr. Geom. Topol. 18 (2018), no. 7, 3789--3820


\bibitem{RuSu1} D. Rutherford, M.  Sullivan, \textsl{Cellular Legendrian contact homology for surfaces, Part I}, arXiv:1608.02984.

\bibitem{RuSu2} D. Rutherford, M.  Sullivan,  \textsl{Cellular Legendrian contact homology for surfaces, Part II},  Internat. J. Math. 30 (2019), no. 7 , 135 pp.

\bibitem{RuSu25} D. Rutherford, M.  Sullivan, \textsl{Cellular Legendrian contact homology for surfaces, Part III}  Internat. J. Math. 30 (2019), no. 7, 111 pp.



\bibitem{RuSu3}  D. Rutherford, M.  Sullivan,  \textsl{Generating families and augmentations for Legendrian surfaces}, Algebr. Geom. Topol. 18 (2018), no. 3, 1675--1731.
  

\bibitem{Shende}  Shende, V.,  \textsl{The conormal torus is a complete knot invariant}, Forum Math. Pi 7 (2019), e6, 16 pp.

\bibitem{STZ}
V. Shende, D. Treumann, and E. Zaslow, \textsl{Legendrian knots and constructible sheaves}, Invent. Math. 207 (2017), no. 3, 1031--1133. 


\bibitem{TZ}
D. Treumann and E. Zaslow, \textsl{Cubic planar diagrams and Legendrian surface theory}, Adv. Theor. Math. Phys., 22 (2018), no. 5, 1289--1345.
 

\bibitem{Viterbo} C. Viterbo, \textsl{ Sheaf Quantization of Lagrangians and Floer cohomology}, arXiv:1901.09440. 

\end{thebibliography}
\end{document}